\providecommand{\U}[1]{\protect\rule{.1in}{.1in}}
\newtheorem{theorem}{Theorem}
\theoremstyle{plain}
\newtheorem{conjecture}[theorem]{Conjecture}
\newtheorem{definition}[theorem]{Definition}
\newtheorem{lemma}[theorem]{Lemma}
\newtheorem{notation}[theorem]{Notation}
\newtheorem{proposition}[theorem]{Proposition}
\numberwithin{equation}{section}
\begin{document}
\title[Comparison of testing conditions]{A comparison of trilinear testing conditions for the paraboloid Fourier
extension and Kakeya conjectures in three dimensions}
\author{Eric Sawyer}
\address{McMaster University\\
Hamilton, Ontario, Canada}
\email{sawyer@mcmaster.ca}
\thanks{Eric Sawyer's research supported in part by a grant from the National Sciences
and Engineering Research Council of Canada}
\maketitle

\begin{abstract}
We compare the smooth Alpert testing condition for the paraboloid Fourier
extension conjecture in \cite{RiSa3} to the modulated testing condition for
the Kakeya conjecture in \cite{RiSa2}. To this end, the modulated testing
condition is converted to a certain restricted smooth Alpert testing condition
for the paraboloid Fourier extension conjecture.

\end{abstract}
\tableofcontents

\section{Introduction}

The purpose of this paper is to compare the trilinear characterizations of the
paraboloid Fourier extension conjecture and the Kakeya strong maximal operator
conjecture appearing in \cite{RiSa3} and \cite{RiSa2} respectively, and then
to discuss some consequences in the final subsection of this introduction. We
begin by recalling certain of the trilinear characterizations in
$\mathbb{R}^{3}$ obtained in \cite{RiSa3} and \cite{RiSa2}.

\subsection{Paraboloid Fourier extension conjecture in $\mathbb{R}^{3}$}

The Fourier extension operator $\mathcal{E}$ on the paraboloid in three
dimensions is given by,%
\[
\mathcal{E}f\left(  \xi\right)  \equiv\left[  \Phi_{\ast}\left(  f\left(
x\right)  dx\right)  \right]  ^{\wedge}\left(  \xi\right)  =\int_{U}%
e^{-i\Phi\left(  x\right)  \cdot\xi}f\left(  x\right)  dx,\ \ \ \ \ \text{for
}\xi\in\mathbb{R}^{3},
\]
where $\Phi_{\ast}\left(  f\left(  x\right)  dx\right)  $ denotes the
pushforward of the measure $f\left(  x\right)  dx$ supported in $U\subset
B_{\mathbb{R}^{2}}\left(  0,\frac{1}{2}\right)  $ to the paraboloid
$\mathbb{P}^{2}$ under the usual parameterization $\Phi:U\rightarrow
\mathbb{P}^{2}$ by $\Phi\left(  x\right)  =\left(  x_{1},x_{2},x_{1}^{2}%
+x_{2}^{2}\right)  $ for $x=\left(  x_{1},x_{2}\right)  \in U$, where the
pushforward $\Phi_{\ast}\mu$ of a measure $\mu$ is defined by the identity
$\left\langle g,\Phi_{\ast}\mu\right\rangle =\left\langle g\circ\Phi
,\mu\right\rangle $ for all $g\in C_{c}\left(  \mathbb{R}^{3}\right)  $. We
will often abuse notation and simply write $\Phi_{\ast}f$ where we view $f$ as
the measure $f\left(  x\right)  dx$. The Fourier extension conjecture for the
paraboloid $\mathbb{P}^{2}$ in $\mathbb{R}^{3}$ is,%
\begin{equation}
\left\Vert \mathcal{E}f\right\Vert _{L^{q}\left(  \mathbb{R}^{3}\right)  }\leq
C\left\Vert f\right\Vert _{L^{q}\left(  U\right)  },\ \ \ \ \ \text{for all
}q>3\text{ and }f\in L^{q}\left(  U\right)  .\label{FEC}%
\end{equation}
We will often refer to this inequality as the \emph{linear} Fourier extension
conjecture to distinguish it from the trilinear variants below.

Define
\[
\mathsf{Q}_{s,U}f=\sum_{I\in\mathcal{G}_{s}\left[  U\right]  }\bigtriangleup
_{I;\kappa}^{\eta}f=\sum_{I\in\mathcal{G}_{s}\left[  U\right]  }\left\langle
f,h_{I;\kappa}\right\rangle h_{I;\kappa}^{\eta}%
\]
to be the smooth Alpert pseudoprojection of $f$ at level $s$ in the grid
$\mathcal{G}$ localized to $U$, where $\mathcal{G}$ is as in \cite{RiSa3}.
Here $h_{I;\kappa}^{\eta}$ is a smooth Alpert wavelet with $\kappa$ vanishing
moments as introduced in \cite{Saw7}, and also described below. We say that a
triple $\left(  U_{1},U_{2},U_{3}\right)  $ of squares $U_{k}\subset U$ is
$\nu$\emph{-disjoint} if%
\begin{equation}
\ell\left(  U_{k}\right)  \approx\nu,\text{ and }\operatorname*{dist}\left(
U_{j},U_{k}\right)  \geq\nu,\ \ \ \ \ 1\leq j,k\leq3.\label{nu disjoint'}%
\end{equation}

Now we can describe the smooth Alpert trilinear characterization from
\cite{RiSa3}.

\begin{definition}
Let $1<q<\infty$ and $0<\varepsilon,\delta,\nu<1$ and $\kappa\in\mathbb{N}$.
We say the single scale disjoint\emph{\ }trilinear inequality $\mathcal{E}%
_{\operatorname*{disj}\nu}^{\delta,\kappa}\left(  \otimes_{3}L^{\infty
}\rightarrow L^{\frac{q}{3}};\varepsilon\right)  $ holds if there is a
positive constant $C_{q,\delta,\kappa,\varepsilon,\nu}$ such that%
\[
\left\Vert \mathcal{E}\mathsf{Q}_{s,U_{1}}^{\eta}f_{1}\ \mathcal{E}%
\mathsf{Q}_{s,U_{2}}^{\eta}f_{2}\ \mathcal{E}\mathsf{Q}_{s,U_{3}}^{\eta}%
f_{3}\right\Vert _{L^{\frac{q}{3}}\left(  B_{\mathbb{R}^{3}}\left(
0,2^{\frac{s}{1-\delta}}\right)  \right)  }\leq C_{q,\delta,\kappa
,\varepsilon,\nu}2^{\varepsilon s}\left\Vert f_{1}\right\Vert _{L^{\infty
}\left(  U_{1}\right)  }\left\Vert f_{2}\right\Vert _{L^{\infty}\left(
U_{2}\right)  }\left\Vert f_{3}\right\Vert _{L^{\infty}\left(  U_{3}\right)
}\ ,
\]
holds for all $s\in\mathbb{N}$, $\varepsilon>0$, all single scale smooth
Alpert pseudoprojections $\mathsf{Q}_{s,U_{k}}f_{k}=\sum_{I\in\mathcal{G}%
_{s}\left[  U_{k}\right]  }\bigtriangleup_{I;\kappa}^{\eta}f$ with $\kappa$
vanishing moments, and all $\nu$-disjoint triples $\left(  U_{1},U_{2}%
,U_{3}\right)  \in U^{3}$.
\end{definition}

\begin{conjecture}
\label{tFec}Suppose $0<\delta<1$ and $\kappa\in\mathbb{N}$ with $\kappa
>\frac{20}{\delta}$. Then the smooth $\left(  \delta,\kappa\right)  $-Alpert
trilinear Fourier extension conjecture for the paraboloid $\mathbb{P}^{2}$ in
$\mathbb{R}^{3}$ is that for every $q>3$ there is $\nu>0$ depending on $q $,
such that the single scale disjoint\emph{\ }trilinear inequality
$\mathcal{E}_{\operatorname*{disj}\nu}^{\delta,\kappa}\left(  \otimes
_{3}L^{\infty}\rightarrow L^{\frac{q}{3}};\varepsilon\right)  $ holds.
\end{conjecture}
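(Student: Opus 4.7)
\emph{Strategy.} Since the paper converts the modulated trilinear testing condition for the Kakeya conjecture in \cite{RiSa2} into (a restricted form of) a smooth Alpert testing condition for the paraboloid extension, the plan is to prove Conjecture~\ref{tFec} by combining this Kakeya input with a wave packet expansion and an induction-on-scales argument. The role of the $\nu$-disjointness is to supply the uniform transversality needed for a multilinear restriction/Kakeya estimate at scale $2^{s/(1-\delta)}$, while the tolerance $2^{\varepsilon s}$ absorbs the logarithmic losses incurred when passing between scales and between the wavelet and modulation pictures.

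\emph{Wave packets and the local trilinear bound.} First I would unpack the wave packet structure implicit in $\mathcal{E}\mathsf{Q}_{s,U_k}f_k$: an Alpert wavelet $h_{I;\kappa}$ attached to a cube $I\in\mathcal{G}_s[U_k]$ of side $2^{-s}$ with $\kappa$ vanishing moments, pushed through $\mathcal{E}$, produces a packet concentrated on a plate of dimensions $2^s\times 2^s\times 2^{2s}$ oriented along the normal to $\mathbb{P}^{2}$ at $\Phi(c_I)$, with Schwartz tails whose decay rate is controlled by $\kappa$ (this accounts for the quantitative threshold $\kappa>20/\delta$). Writing each factor as a sum of such packets and applying the Bennett--Carbery--Tao multilinear restriction theorem on BCT-scale balls $B(x_0,2^s)$, one obtains, for $\nu$-transverse triples, the trilinear $L^{q/3}$ estimate with a $2^{\varepsilon s}$ loss and constant depending on $\nu$ and $q$, in which the right-hand side is still an $L^{2}$ average of wavelet coefficients.

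\emph{Global summation and the main obstacle.} To promote the BCT-scale bound to the ball $B(0,2^{s/(1-\delta)})$, I would tile the larger ball by $\sim 2^{3s\delta/(1-\delta)}$ BCT-balls and sum the local trilinear contributions, controlling the cross-cell overlaps of the plates by invoking the converted modulated Kakeya testing condition of \cite{RiSa2}. The hardest step is precisely this global aggregation: naive summation of the BCT-scale estimates gives a loss far in excess of $2^{\varepsilon s}$, and Bourgain--Guth polynomial partitioning on its own does not close the gap in the range $q>3$. The modulated Kakeya bound is expected to supply exactly the tube-geometric compression required to keep the aggregated loss below $2^{\varepsilon s}$. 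Two technical obstacles will need careful handling: first, the passage from the $L^{2}$-normalized wavelet output of BCT to the $L^{\infty}$-normalized right-hand side of Conjecture~\ref{tFec}, which relies on the almost-disjointness of the Alpert packets within each $U_k$; and second, the simultaneous balancing of the parameters $(\kappa,\delta,\varepsilon,\nu)$ so that the Schwartz tails of the Alpert wavelets, the scale gap $2^{\varepsilon s}$, and the transversality constant from the Kakeya input all fit together. I expect the tail bookkeeping and the choice of $\nu$ as a function of $q$ close to $3$ to be where the argument is genuinely delicate.
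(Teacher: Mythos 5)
This statement is a \emph{conjecture}, not a theorem, and the paper does not prove it. By Theorem~\ref{single main Alpert}, Conjecture~\ref{tFec} is equivalent to the linear Fourier extension conjecture~(\ref{FEC}) for the paraboloid in $\mathbb{R}^3$, which is a famous open problem. So any proof strategy you write down is, implicitly, a strategy for resolving the restriction conjecture, and the bar for ``correct'' is correspondingly high. Your proposal does not clear it, and one should not expect it to.

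The concrete gap in your strategy is in the ``global summation'' step, and you half-acknowledge it yourself. You propose to pass from Bennett--Carbery--Tao estimates on $2^s$-balls to the full ball $B(0,2^{s/(1-\delta)})$ by invoking the converted modulated Kakeya testing condition of~\cite{RiSa2} to control cross-cell overlaps. But that condition, even granting it, is only equivalent (via Theorem~\ref{SFA} and the Conversion Theorem of this paper) to the \emph{Kakeya maximal operator} conjecture in $\mathbb{R}^3$, which is itself open --- and, more to the point, even a full resolution of Kakeya is known not to imply the restriction conjecture in the range $q>3$. This is precisely the Bourgain--Guth program, and it stalls at exponents strictly above~$3$; the loss in the aggregation over $\sim 2^{3s\delta/(1-\delta)}$ BCT-cells is not absorbed by $2^{\varepsilon s}$ using tube-incidence information alone. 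The $\nu$-disjointness of the triple $(U_1,U_2,U_3)$ gives transversality, not the additional structural input needed to close the endpoint. Additionally, the parameter threshold $\kappa>20/\delta$ controls Schwartz tails of the smooth Alpert packets --- it is needed to make the pseudoprojections behave like genuine packets after convolution with $\phi_{\eta\ell(I)}$ --- but it does nothing to remedy the aggregation loss.

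If the goal was instead to explain \emph{why} the conjecture is stated as it is, your proposal is a reasonable commentary, but it should be framed as such: the equivalence with~(\ref{FEC}) is the content of Theorem~\ref{single main Alpert} (proved in~\cite{RiSa3}), and what the present paper actually establishes is the Conversion Theorem relating $\mathcal{A}_{\operatorname{disj}\nu}^{\operatorname{square}}$ and $\mathcal{B}_{\operatorname{disj}\nu}^{\operatorname{square}}$, not Conjecture~\ref{tFec} itself.
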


\begin{theorem}
[C. Rios and E. Sawyer \cite{RiSa2}]\label{single main Alpert}Fix $0<\delta<1$
and $\kappa\in\mathbb{N}$ with $\kappa>\frac{20}{\delta}$. Then the linear
Fourier extension conjecture (\ref{FEC}) for the paraboloid $\mathbb{P}^{2}$
in $\mathbb{R}^{3}$ holds \emph{if and only if} the $\left(  \delta
,\kappa\right)  $-Alpert trilinear Fourier extension Conjecture \ref{tFec} holds.
\end{theorem}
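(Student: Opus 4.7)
The plan is to prove the two directions separately; the forward direction (linear $\Rightarrow$ trilinear) is routine, while the reverse direction (trilinear $\Rightarrow$ linear) is the main content and rests on a Bourgain--Guth broad/narrow decomposition together with the near-orthogonality provided by the $\kappa$ vanishing moments. For the easy direction, assume (\ref{FEC}) and apply H\"older's inequality in the $L^{q/3}$ norm:
\begin{equation*}
\Bigl\Vert \prod_{k=1}^{3}\mathcal{E}\mathsf{Q}_{s,U_k}^{\eta}f_k\Bigr\Vert_{L^{q/3}}\leq\prod_{k=1}^{3}\bigl\Vert\mathcal{E}\mathsf{Q}_{s,U_k}^{\eta}f_k\bigr\Vert_{L^q}\leq C^3\prod_{k=1}^{3}\bigl\Vert\mathsf{Q}_{s,U_k}^{\eta}f_k\bigr\Vert_{L^q(U_k)},
\end{equation*}
and then use the uniform $L^q$-boundedness of smooth Alpert pseudoprojections together with $|U_k|\approx\nu^2$ to bound the right-hand side by $\nu^{6/q}\prod_k\Vert f_k\Vert_{L^\infty(U_k)}$. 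No growth in $s$ arises, so the factor $2^{\varepsilon s}$ on the right of the trilinear inequality is not needed here.

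For the hard direction, fix $q>3$, the parameters $\delta$ and $\kappa$, a large radius $R=2^{s_0/(1-\delta)}$, and $f\in L^q(U)$. Expand $f=\sum_{s\geq 0}\mathsf{Q}_s f$ in the smooth Alpert wavelet basis and, scale by scale, run a broad/narrow dichotomy in the style of Bourgain--Guth: cover $U$ by a $\nu$-net $\{U^\alpha\}$ and decompose $\mathsf{Q}_s f=\sum_\alpha \mathsf{Q}_{s,U^\alpha}f$. At each $\xi\in B(0,R)$, either three $\nu$-disjoint caps contribute comparably to $\mathcal{E}\mathsf{Q}_s f(\xi)$ (the broad case), in which case
\begin{equation*}
|\mathcal{E}\mathsf{Q}_s f(\xi)|\lesssim \nu^{-O(1)}\max_{(\alpha_1,\alpha_2,\alpha_3)\ \nu\text{-disjoint}}\prod_{j=1}^{3}|\mathcal{E}\mathsf{Q}_{s,U^{\alpha_j}}f(\xi)|^{1/3}
\end{equation*}
holds and the hypothesis $\mathcal{E}_{\operatorname*{disj}\nu}^{\delta,\kappa}(\otimes_3 L^\infty\to L^{q/3};\varepsilon)$, combined with a pigeonholing that trades the $L^\infty$ norms for $L^q$ averages on each $U^{\alpha_j}$, yields the scale-$s$ bound with a single loss $2^{\varepsilon s}$; or the contribution is essentially localized to a single $\nu$-cap (the narrow case), which after parabolic rescaling becomes an instance of the same inequality on a smaller region and can be iterated.

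The central obstacle will be the summation over the scales $s=0,1,\dots,s_0$. Stacking the broad-case bounds by the triangle inequality would accumulate $2^{\varepsilon s}$ losses at every scale and defeat the argument, so I would exploit near-orthogonality of the smooth Alpert pseudoprojections: cross-interactions decay like $2^{-\kappa|s-s'|}$ thanks to the $\kappa$ vanishing moments, and $\kappa>20/\delta$ is precisely the threshold at which this decay dominates both the broad-case loss $2^{\varepsilon s}$ and the logarithmic cost accumulated by the narrow-case iteration up to the ball radius $R=2^{s_0/(1-\delta)}$. The hard part will be orchestrating the broad/narrow induction so that all the narrow branches terminate at a manageable bound and the residual $L^q$ mass is absorbed into the orthogonality sum; once this balancing is achieved, sending $s_0\to\infty$ gives (\ref{FEC}).
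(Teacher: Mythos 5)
The paper does not prove Theorem \ref{single main Alpert}. It is imported from Rios--Sawyer (the Alpert trilinear characterization paper, listed as \cite{RiSa3} in the bibliography, though the theorem header cites \cite{RiSa2}), and is used here only as a foil for the Kakeya-side Conversion Theorem. So there is no proof in this paper against which to measure your attempt; what follows is feedback on the sketch itself.

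Your forward direction (H\"older plus the uniform $L^q$-boundedness of the single-scale Alpert pseudoprojections, with the $|U_k|\approx\nu^2$ gain) is sound. For the hard direction the scaffolding (Bourgain--Guth broad/narrow dichotomy, parabolic rescaling in the narrow case, vanishing-moment orthogonality to collect scales) is the right one, but the two points you name as obstacles are exactly the ones your sketch does not resolve. You never explain what $\delta$ is buying: the trilinear hypothesis is integrated only over the \emph{enlarged} ball $B(0,2^{s/(1-\delta)})$, and the slack between the frequency scale $2^s$ and that spatial radius is the budget the broad/narrow iteration must spend, so the condition $\kappa>20/\delta$ cannot emerge from a qualitative ``decay dominates'' argument without explicitly pricing this slack. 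Further, the asserted $2^{-\kappa|s-s'|}$ cross-scale decay applies to wavelet coefficients in physical space (as in the well-localized estimates for $T^{-1}$), not directly to the $L^q(\mathbb{R}^3)$ interactions of $\mathcal{E}\mathsf{Q}_s f$ and $\mathcal{E}\mathsf{Q}_{s'} f$; converting moment-vanishing into usable almost-orthogonality on the extension side requires an argument on the Fourier side (e.g.\ concentration of $(\Phi_\ast\triangle^\eta_{I;\kappa}f)^{\wedge}$ near the dual tube of $I$), which is the technical heart your outline leaves open. Finally, the narrow-case rescaling must preserve the class of admissible inputs; parabolic dilations scale the two coordinate directions anisotropically and hence mix Alpert scales, so the invariance you implicitly use deserves a justification rather than a passing remark.
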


This theorem says that the linear Fourier extension conjecture is equivalent
to a corresponding disjoint trilinear smooth Alpert conjecture at single
scales, that permits the mild growth constant $2^{\varepsilon s}$ and at the
expense of taking integration over the local ball $B_{\mathbb{R}^{3}}\left(
0,2^{\frac{s}{1-\delta}}\right)  $ (larger than $B_{\mathbb{R}^{3}}\left(
0,2^{s}\right)  $), where $s$ is the scale of the smooth Alpert pseudoprojection.

\subsection{Kakeya conjecture in $\mathbb{R}^{3}$}

Let $0<\varepsilon<1$. We say the statement $\mathcal{K}^{\ast}\left(
\otimes_{1}L^{\infty}\rightarrow L^{\frac{3}{2}};\varepsilon\right)  $ holds
if there is a positive constant $C_{\varepsilon}$ such that%
\begin{align}
& \left\Vert \sum_{T\in\mathbb{T}}\mathbf{1}_{T}\right\Vert _{L^{\frac{3}{2}%
}\left(  \mathbb{R}^{3}\right)  }\leq C_{\varepsilon}\delta^{-\varepsilon
},\label{Kak dual}\\
& \text{for all families }\mathbb{T}\text{ of }\delta\text{-separated }%
\delta\text{-tubes in }\mathbb{R}^{n}\text{ and }0<\delta<1.\nonumber
\end{align}

\begin{conjecture}
[strong Kakeya maximal operator conjecture]\label{linear Kakeya}The statement
$\mathcal{K}^{\ast}\left(  \otimes_{1}L^{\infty}\rightarrow L^{\frac{3}{2}%
};\varepsilon\right)  $ holds for all $0<\varepsilon<1$.
\end{conjecture}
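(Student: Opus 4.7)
Proving Conjecture \ref{linear Kakeya} is of course a famous open problem, so my proposal is not to deliver a complete proof but to sketch the strategy made available by the trilinear comparison developed in \cite{RiSa2}, \cite{RiSa3} and the present paper: reduce Kakeya to a single clean (if still difficult) trilinear statement by way of two successive equivalences.

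The first step is to invoke the trilinear modulated testing characterization of Kakeya from \cite{RiSa2}, which replaces the linear inequality (\ref{Kak dual}) by a single-scale, $\nu$-disjoint trilinear inequality on $\delta$-tubes grouped by direction and modulated by phases associated to tube cores. The reduction is the standard one: expand $\|\sum_{T\in\mathbb{T}}\mathbf{1}_{T}\|_{L^{3/2}}^{3/2}$ and pigeonhole on the pairwise angles, then pass to the modulated wave-packet model. The second step is the conversion announced in the abstract of this paper: rewrite that modulated Kakeya testing condition as a restricted smooth Alpert testing condition for the paraboloid extension operator $\mathcal{E}$. Here one matches $\delta=2^{-s}$, identifies three $\nu$-disjoint $\delta$-caps on $\mathbb{P}^{2}$ with squares $U_{1},U_{2},U_{3}\subset U$, and interprets each $\mathcal{E}\mathsf{Q}_{s,U_{k}}^{\eta}f_{k}$ as the wave-packet sum associated with the corresponding tube family, with the modulation phases absorbed into $e^{-i\Phi(x)\cdot\xi}$. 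The third step is to verify the resulting restricted trilinear Alpert inequality; because that inequality is a strictly weaker hypothesis than the unrestricted trilinear Alpert inequality equivalent (by Theorem \ref{single main Alpert}) to the linear Fourier extension conjecture (\ref{FEC}), one does not in principle need to prove full FEC in order to derive Kakeya.

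The main obstacle is of course the final verification. Even in restricted form, the condition demands an $L^{q/3}$ bound with $q>3$ on a trilinear product of wave packets at a single scale, with only $2^{\varepsilon s}$ slack and integration confined to the slightly enlarged ball $B_{\mathbb{R}^{3}}(0,2^{s/(1-\delta)})$. The Bennett-Carbery-Tao multilinear restriction theorem provides a sharp trilinear $L^{3}$ estimate but does not by itself produce the required $L^{q/3}$ for any $q>3$, and bridging this gap appears to demand a polynomial-partitioning induction on scales in the spirit of Guth, tailored to the $\nu$-disjoint cap geometry and compatible with the $\kappa$ vanishing moments of the smooth Alpert wavelets $h_{I;\kappa}^{\eta}$. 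Whether such an induction can succeed in the restricted setting \emph{without} simultaneously proving all of (\ref{FEC}) is precisely the question that the comparison carried out in this paper is designed to illuminate, and it is where any genuinely new input would have to enter.
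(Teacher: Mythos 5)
You have correctly recognized that Conjecture~\ref{linear Kakeya} is a famous open problem that the paper does not prove: it is stated purely as a conjecture, and the paper's actual contribution is the Conversion Theorem relating the modulated testing characterization $\mathcal{A}_{\operatorname*{disj}\nu}^{\operatorname*{square}}$ of \cite{RiSa2} to the unmodulated Kakeya-type Alpert testing condition $\mathcal{B}_{\operatorname*{disj}\nu}^{\operatorname*{square}}$. Your strategic sketch is consistent with the paper's framing, in particular the observation that the resulting restricted testing condition is a priori weaker than the unrestricted condition equivalent (via Theorem~\ref{single main Alpert}) to the full extension conjecture (\ref{FEC}), so that verifying it would not automatically require proving all of (\ref{FEC}).

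One imprecision worth flagging: the paper's condition $\mathcal{B}_{\operatorname*{disj}\nu}^{\operatorname*{square}}$ characterizing Kakeya differs from the full Alpert testing condition (\ref{Four}) in \emph{two} distinct respects, as spelled out in the comparison subsection of the Introduction --- (1) the restriction to Kakeya-type polynomials, whose blocks over each $I\in\mathcal{G}_{s}[U]$ are canonical modulations $\{e^{ic_{J}\cdot u_{I}}\}$, and (2) the expectation over random martingale transforms $M_{\pm}$, which by Khintchine amounts to a square-function weakening of the $L^{q/3}$ norm. Your sketch foregrounds only weakening (1). The paper explicitly remarks that keeping (1) but dropping (2) gives a condition sandwiched between (\ref{Four}) and (\ref{Kak}) that is \emph{not} known to be equivalent to (\ref{Kak}), so the square-function relaxation is an essential and non-removable part of the Kakeya characterization; any proposed verification strategy (polynomial partitioning or otherwise) must be formulated at the square-function level and cannot silently replace (\ref{Kak}) by the linear statement with the same Kakeya-type restriction.
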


For a fixed grid $\mathcal{G}$ in $\mathbb{R}^{2}$ and $s\in\mathbb{N}$ define%
\[
\mathcal{G}_{s}\left[  U\right]  \equiv\left\{  I\in\mathcal{G}:I\subset
U\text{ and }\ell\left(  I\right)  =2^{-s}\right\}  .
\]
For each $s\in\mathbb{N}$, we fix a $2^{-s}$-separated subset $\mathcal{G}%
_{s}^{\ast}\left[  U\right]  $ of $\mathcal{G}_{s}\left[  U\right]  $. As this
sequence of squares remains fixed throughout the paper - up until the very end
- we will simply write $\mathcal{G}_{s}\left[  U\right]  $, with the
understanding that the squares $I\in\mathcal{G}_{s}\left[  U\right]  $ are
$2^{-s}$-separated. In any event, $\mathcal{G}_{s}\left[  U\right]  $ is a
finite union of collections of the form $\mathcal{G}_{s}^{\ast}\left[
U\right]  $.

\begin{definition}
\label{def M}Set $\mathcal{V}_{s}\equiv\left\{  \mathbb{R}^{3}\text{-valued
sequences on }\mathcal{G}_{s}\left[  U\right]  \right\}  $. For $s\in
\mathbb{N} $ and $\mathbf{u}=\left\{  u_{I}\right\}  _{I\in\mathcal{G}%
_{s}\left[  U\right]  }\in\mathcal{V}_{s}$, define the modulation
$\mathsf{M}_{\mathbf{u}}^{s}$ on $\Phi\left(  U\right)  $ by,%
\[
\mathsf{M}_{\mathbf{u}}^{s}\left(  z\right)  \equiv\sum_{I\in\mathcal{G}%
_{s}\left[  U\right]  }e^{iu_{I}\cdot z}\mathbf{1}_{\Phi\left(  2I\right)
}\left(  z\right)  ,\ \ \ \ \ \text{\ for }z\in\Phi\left(  U\right)  .
\]

\end{definition}

\begin{definition}
\label{def Q}Let $I_{0}\equiv\left[  0,1\right]  ^{2}$ be the unit square in
the plane. Fix $\varphi\in C_{c}^{\infty}\left(  2I_{0}\right)  $ such that
$\varphi=1$ on $I_{0}$. Then for any square $I$, let $\varphi_{I}$ be the
$L^{2}$ normalized translation and dilation of $\varphi$ that is adapted to
$I$. Given $f\in L^{1}\left(  U\right)  $ and $s\in\mathbb{N}$ define the
pseudoprojections
\[
\bigtriangleup_{I}^{\varphi}f\equiv\left\langle f,\varphi_{I}\right\rangle
\varphi_{I}\text{ and }\mathsf{Q}_{s,U}^{\varphi}f\equiv\sum_{I\in
\mathcal{G}_{s}\left[  U\right]  }\bigtriangleup_{I}^{\varphi}f=\sum
_{I\in\mathcal{G}_{s}\left[  U\right]  }\left\langle f,\varphi_{I}%
\right\rangle \varphi_{I}\ .
\]

\end{definition}

Note that the assumption on separation of squares in $\mathcal{G}_{s}\left[
U\right]  =\mathcal{G}_{s}^{\ast}\left[  U\right]  $ implies that $\left\{
\bigtriangleup_{I}^{\varphi}f\right\}  _{I\in\mathcal{G}_{s}\left[  U\right]
}$ is a collection of orthogonal pseudoprojections at level $s$, by which we
mean that for $I,L\in\mathcal{G}_{s}\left[  U\right]  =\mathcal{G}_{s}^{\ast
}\left[  U\right]  $,%
\begin{equation}
\bigtriangleup_{L}^{\varphi}\bigtriangleup_{I}^{\varphi}f=\left\{
\begin{array}
[c]{ccc}%
c_{\flat}\bigtriangleup_{I}^{\varphi}f & \text{ if } & L=I\\
0 & \text{ if } & L\not =I
\end{array}
\right.  ,\ \ \ \ \ \text{where }c_{\flat}=\left\langle \varphi_{I}%
,\varphi_{I}\right\rangle =\left\langle \varphi,\varphi\right\rangle
\approx1.\label{pseudo}%
\end{equation}
Now we recall the single scale modulated Fourier square function used in
\cite{RiSa2}.

\begin{definition}
For $s\in\mathbb{N}$ and $\mathbf{u}\in\mathcal{V}_{s}$ define the
\emph{modulated\ single scale }Fourier square function by
\begin{equation}
\mathcal{S}_{\operatorname*{Fourier}}^{s,\varphi,\mathbf{u}}f\left(
\xi\right)  \equiv\left(  \sum_{I\in\mathcal{G}_{s}\left[  U\right]
}\left\vert \left(  \mathsf{M}_{\mathbf{u}}^{s}\Phi_{\ast}\bigtriangleup
_{I}^{\varphi}f\right)  ^{\wedge}\left(  \xi\right)  \right\vert ^{2}\right)
^{\frac{1}{2}}=\left(  \sum_{I\in\mathcal{G}_{s}\left[  U\right]  }\left\vert
\tau_{u_{I}}\widehat{\Phi_{\ast}\bigtriangleup_{I}^{\varphi}f}\left(
\xi\right)  \right\vert ^{2}\right)  ^{\frac{1}{2}},\label{def Four square}%
\end{equation}
where $\mathsf{M}_{\mathbf{u}}^{s}$ is the modulation defined in Definition
\ref{def M}, and $\tau_{u_{I}}$ denotes translation by $u_{I}$.
\end{definition}

Now we are ready to define the modulated single scale trilinear conjecture,
and state the main theorem from \cite{RiSa2}.

\begin{definition}
\label{def A}Let $1<q<\infty$ and $0<\varepsilon,\nu<1$. We say the statement
$\mathcal{A}_{\operatorname*{disj}\nu}^{\operatorname*{square}}\left(
\otimes_{3}L^{\infty}\rightarrow L^{\frac{q}{3}};\varepsilon\right)  $ holds
if there is a positive constant $C_{q,\varepsilon,\nu}$ depending only on $q$,
$\varepsilon$ and $\nu$, such that,%
\begin{align}
&  \left\Vert \mathcal{S}_{\operatorname*{Fourier}}^{s,\varphi,\mathbf{u}_{1}%
}f_{1}\ \mathcal{S}_{\operatorname*{Fourier}}^{s,\varphi,\mathbf{u}_{2}}%
f_{2}\ \mathcal{S}_{\operatorname*{Fourier}}^{s,\varphi,\mathbf{u}_{3}}%
f_{3}\right\Vert _{L^{\frac{q}{3}}\left(  \mathbb{R}^{3}\right)  }\leq
C_{q,\varepsilon,\nu}2^{\varepsilon s}\left\Vert f_{1}\right\Vert _{L^{\infty
}\left(  U\right)  }\left\Vert f_{2}\right\Vert _{L^{\infty}\left(  U\right)
}\left\Vert f_{3}\right\Vert _{L^{\infty}\left(  U\right)  }%
\ ,\label{single tri Four}\\
\text{for all }s &  \in\mathbb{N}\text{ with }2^{-s}\leq\nu\text{, all }%
f_{k}\in L^{\infty}\left(  U_{k}\right)  \text{, all sequences }\mathbf{u}%
_{k}\in\mathcal{V}\text{, and all }\nu\text{-disjoint triples }\left(
U_{1},U_{2},U_{3}\right)  \subset U^{3}.\nonumber
\end{align}

\end{definition}

\begin{conjecture}
[modulated single scale Fourier square function disjoint trilinear extension
conjecture]\label{ssFsftec}For every $q>3$ there is $0<\nu<1$ such that the
statement $\mathcal{A}_{\operatorname*{disj}\nu}^{\operatorname*{square}%
}\left(  \otimes_{3}L^{\infty}\rightarrow L^{\frac{q}{3}};\varepsilon\right)
$ holds for all $0<\varepsilon<1$.
\end{conjecture}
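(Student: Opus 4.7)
\emph{Proof plan.} The plan is to interpret the modulated single-scale Fourier square function geometrically as an $\ell^{2}$-sum of bumps adapted to Kakeya-style tubes, and then to reduce the trilinear inequality (\ref{single tri Four}) to a trilinear Kakeya/restriction estimate, where the hypothesis that $\left(U_{1},U_{2},U_{3}\right)$ is $\nu$-disjoint supplies the transversality needed for the latter.

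\emph{Step 1 (tube picture).} Each pseudoprojection piece $\bigtriangleup_{I}^{\varphi}f$ is $L^{2}$-normalized and localized to the square $2I\subset U_{k}$ of side $2^{-s}$, so $\Phi_{\ast}\bigtriangleup_{I}^{\varphi}f$ is a measure on the paraboloid essentially supported on the cap $\Phi\left(2I\right)$. Stationary phase on this cap identifies $\widehat{\Phi_{\ast}\bigtriangleup_{I}^{\varphi}f}$ as a Schwartz bump $\phi_{T_{I}}$ concentrated on a tube $T_{I}$ of dimensions $2^{s}\times 2^{s}\times 2^{2s}$, with long axis along the normal to $\mathbb{P}^{2}$ at $\Phi\left(I\right)$ and $L^{\infty}$-amplitude $\lesssim 2^{-s}\left\Vert f_{k}\right\Vert _{L^{\infty}\left(U\right)}$. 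Applying $\tau_{u_{I}}$ translates $T_{I}$ by $u_{I}$ without changing its direction or dimensions, so
\[
\mathcal{S}_{\operatorname*{Fourier}}^{s,\varphi,\mathbf{u}_{k}}f_{k}(\xi)\approx\left\Vert f_{k}\right\Vert _{L^{\infty}\left(U\right)}\Big(\sum_{I\in\mathcal{G}_{s}\left[U_{k}\right]}\left\vert \phi_{T_{I}+u_{I}}(\xi)\right\vert ^{2}\Big)^{1/2}.
\]
The $\nu$-disjointness of $\left(U_{1},U_{2},U_{3}\right)$ forces the three direction sets on $S^{2}$ to be pairwise $\gtrsim\nu$-transverse.

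\emph{Step 2 (trilinear Kakeya plus orthogonality).} Parabolic rescaling by $2^{-2s}$ in $\xi$ turns $\mathbb{T}_{k}:=\left\{T_{I}+u_{I}:I\in\mathcal{G}_{s}\left[U_{k}\right]\right\}$ into a family of unit-length $2^{-s}$-tubes with $2^{-s}$-separated directions in a $\nu$-cap of $S^{2}$. The Bennett--Carbery--Tao multilinear Kakeya inequality, in Guth's sharp $\varepsilon$-free form, gives the $L^{1}$-bound
\[
\Big\Vert \prod_{k=1}^{3}\Big(\sum_{T\in\mathbb{T}_{k}}\mathbf{1}_{T}\Big)^{1/2}\Big\Vert _{L^{1}\left(\mathbb{R}^{3}\right)}\lesssim_{\nu}\prod_{k=1}^{3}\left(\#\mathbb{T}_{k}\cdot\left\vert T\right\vert \right)^{1/2},
\]
which, combined with the pseudo-orthogonality (\ref{pseudo}) and Plancherel on each $U_{k}$, yields (\ref{single tri Four}) at $q=3$. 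For $q>3$ one upgrades to a trilinear restriction-type bound on the square-function side via Guth's polynomial partitioning, and the passage from $\mathbf{1}_{T}$ to the bumps $\phi_{T}$, as well as the Schwartz tails, are absorbed in the factor $2^{\varepsilon s}$ by standard pigeonholing.

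\emph{The hard part.} The main obstacle is the upgrade of Step 2 to $q/3>1$. Rephrased on the Fourier side, this upgrade is essentially a Guth-type trilinear Fourier restriction estimate adapted to our modulated square function, and once one accounts for the fact that the per-tube amplitudes are determined by independent $L^{\infty}$ data on each $U_{k}$, Conjecture \ref{ssFsftec} carries the full content of the Kakeya strong maximal conjecture (Conjecture \ref{linear Kakeya}), which remains open in $\mathbb{R}^{3}$. Accordingly no complete proof is presently available, and the role of the remainder of this paper is not to resolve Conjecture \ref{ssFsftec} but rather to convert the modulated testing condition into a restricted smooth Alpert testing condition for the paraboloid extension conjecture, thereby pinpointing the precise logical relationship between the two open problems.
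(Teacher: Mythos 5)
You have correctly recognized that the statement is an open conjecture, not a result proved in this paper. The paper itself never attempts a proof of Conjecture~\ref{ssFsftec}; it only invokes Theorem~\ref{SFA} (quoted from \cite{RiSa2}) to assert that this conjecture is \emph{equivalent} to the strong Kakeya maximal operator Conjecture~\ref{linear Kakeya}, which is itself open in $\mathbb{R}^{3}$. The entire content of the present paper is the Conversion Theorem, which reformulates the modulated testing condition $\mathcal{A}_{\operatorname*{disj}\nu}^{\operatorname*{square}}$ as the unmodulated Alpert condition $\mathcal{B}_{\operatorname*{disj}\nu}^{\operatorname*{square}}$, so that the Kakeya and Fourier-extension testing conditions can be compared side by side. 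Your closing paragraph states exactly this.

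Your Steps~1 and~2 are a sound heuristic for \emph{why} the conjecture should be Kakeya-equivalent: the stationary-phase identification of $\widehat{\Phi_{\ast}\bigtriangleup_{I}^{\varphi}f}$ with a $2^{s}\times2^{s}\times2^{2s}$ tube bump, the observation that the modulation $\tau_{u_{I}}$ merely translates the tube, the transversality coming from $\nu$-disjointness, and the fact that Bennett--Carbery--Tao/Guth multilinear Kakeya handles only the $L^{1}$ (i.e.\ $q=3$) endpoint while the full range $q>3$ carries the same content as the strong Kakeya conjecture. This is the intuition underlying Theorem~\ref{SFA}, though the actual proof of that equivalence (in \cite{RiSa2}, not reproduced here) requires more than the one-line reductions you sketch, in particular a careful Bourgain--Guth pigeonholing and parabolic rescaling. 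Since neither this paper nor any existing work proves the conjecture, no gap analysis is applicable; your assessment that no complete proof is presently available is correct.
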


\begin{theorem}
[C. Rios and E. Sawyer \cite{RiSa2}]\label{SFA}The strong Kakeya maximal
function conjecture \ref{Kak dual} in $\mathbb{R}^{3}$ holds \emph{if and only
if} the modulated single scale Fourier square function disjoint trilinear
extension Conjecture \ref{ssFsftec} holds for the paraboloid $\mathbb{P}^{2}$.
\end{theorem}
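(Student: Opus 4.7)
The plan is to prove both directions by exploiting the uncertainty-principle duality between $2^{-s}$-caps on the paraboloid and $2^{s}\times 2^{s}\times 2^{2s}$ tubes in physical space. For each square $I\in\mathcal{G}_{s}[U]$ of side $2^{-s}$ centered at $x_{I}$, the pushforward $\Phi_{\ast}\bigtriangleup_{I}^{\varphi}f$ is concentrated on a $2^{-s}$-cap of $\mathbb{P}^{2}$, so by stationary phase $\widehat{\Phi_{\ast}\bigtriangleup_{I}^{\varphi}f}$ is morally a bump of height $\approx 2^{-2s}\Vert f\Vert_{\infty}$ adapted to a tube $T_{I}$ of radius $2^{s}$ and length $2^{2s}$ pointing in the normal direction to $\mathbb{P}^{2}$ at $\Phi(x_{I})$, modulo Schwartz tails. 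The translation $\tau_{u_{I}}$ then moves this tube by $u_{I}$. After the parabolic rescaling $\delta=2^{-s}$, each modulated Fourier square function $\mathcal{S}_{\operatorname*{Fourier}}^{s,\varphi,\mathbf{u}_{k}}f_{k}$ becomes essentially $\Vert f_{k}\Vert_{\infty}\bigl(\sum_{T\in\mathbb{T}_{k}}\mathbf{1}_{T}\bigr)^{1/2}$ for a family $\mathbb{T}_{k}$ of almost $\delta$-separated $\delta$-tubes, while the $\nu$-disjointness of the squares $U_{1},U_{2},U_{3}$ passes to a uniform transversality among the three tube families.

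For the direction Kakeya $\Rightarrow$ trilinear, fix $s,U_{k},f_{k},\mathbf{u}_{k}$, normalize $\Vert f_{k}\Vert_{\infty}=1$, and pass to the essentially-tube-supported picture. Pointwise one then has $\mathcal{S}_{\operatorname*{Fourier}}^{s,\varphi,\mathbf{u}_{k}}f_{k}(\xi)\lesssim 2^{-2s}\bigl(\sum_{T\in\mathbb{T}_{k}}\mathbf{1}_{T}(\xi)\bigr)^{1/2}$ up to Schwartz-tail corrections. The triple product raised to the $q/3$ power and integrated over $\mathbb{R}^{3}$ reduces by H\"older and the elementary Loomis--Whitney bound for transversal triples of tubes to a linear $L^{3/2}$ estimate for $\sum_{T}\mathbf{1}_{T}$, which is exactly the hypothesis $\mathcal{K}^{\ast}\bigl(\otimes_{1}L^{\infty}\rightarrow L^{3/2};\varepsilon\bigr)$. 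The resulting factor $\delta^{-\varepsilon}=2^{\varepsilon s}$ is precisely the growth tolerated in Conjecture \ref{ssFsftec}.

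For the converse, given a family $\mathbb{T}$ of $\delta$-separated $\delta$-tubes with $\delta=2^{-s}$, I would apply a Bourgain--Guth style broad/narrow decomposition to partition $\mathbb{T}=\mathbb{T}_{1}\cup\mathbb{T}_{2}\cup\mathbb{T}_{3}\cup\mathbb{T}_{\text{narrow}}$, so that each $\mathbb{T}_{k}$ is associated to a cap $U_{k}$ in a $\nu$-disjoint triple and the narrow contribution is handled inductively by rescaling. Each transversal subfamily $\mathbb{T}_{k}$ is then realized as the essential support of a square function $\mathcal{S}_{\operatorname*{Fourier}}^{s,\varphi,\mathbf{u}_{k}}f_{k}$ by dualizing the cap-to-tube correspondence and choosing the modulations $\mathbf{u}_{k}=\{u_{I}\}$ to place each $T_{I}$ at its required translate. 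Invoking Conjecture \ref{ssFsftec} gives $\bigl\Vert \prod_{k}\bigl(\sum_{T\in\mathbb{T}_{k}}\mathbf{1}_{T}\bigr)^{1/2}\bigr\Vert_{L^{q/3}}\lesssim 2^{\varepsilon s}=\delta^{-\varepsilon}$, and the standard broad-point extraction of a linear Kakeya bound from a trilinear one (together with an easy interpolation from $q>3$ down to $q=3$) yields the required $L^{3/2}$ estimate.

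The hard part will be making the informal ``bump adapted to a tube'' reduction quantitatively uniform across the $\sim 2^{2s}$ scales of summation while preserving the $\varepsilon$-arithmetic. The Fourier transform of a cap is not compactly supported, and the Schwartz tails leak out of any fixed tube; after summing these tails one must pad the tubes or use weighted indicators to absorb the leakage without destroying the transversality inherited from $\nu$-disjointness. Equally delicate is the calibration, in the converse direction, of the broad/narrow scale parameter against $\nu$ and $q$, so that the induction-on-scales does not erode the $\delta^{-\varepsilon}$ gain supplied by the trilinear hypothesis.
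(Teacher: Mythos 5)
This theorem is not proved in the present paper at all: it is imported verbatim from the reference \cite{RiSa2} and used as a black box to set up the Conversion Theorem, which is the paper's sole original result. There is therefore no internal argument to compare your sketch against. The only indirect evidence of the route taken in \cite{RiSa2} comes from two places: the comparison subsection, which lists the geometric/combinatoric trilinear Kakeya estimate (\ref{tri Kak dual}) as one of three equivalent formulations; and the ``passage from multiscale to single scale'' subsection, which cites ``the Kakeya square function equivalence (\ref{Kak square equiv}) together with the Bourgain--Guth pigeonholing argument as was done in \cite{RiSa2}.'' Your outline --- cap-to-tube uncertainty duality, transversality inherited from $\nu$-disjointness, and a Bourgain--Guth broad/narrow decomposition in the converse --- is consistent with those clues, and you correctly single out the Schwartz-tail bookkeeping as the principal analytic hurdle (this is real but routine: one replaces $\mathbf{1}_{T}$ by a rapidly decaying weight $w_{T}$ and reabsorbs the overlap by pigeonholing over dyadic annuli around each tube).

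Two calibration points are worth flagging, since your last paragraph is exactly where the sketch is thin. First, in the forward direction a straight H\"older lands you on $\Vert\sum_{T}\mathbf{1}_{T}\Vert_{L^{q/2}}$, which the $L^{3/2}$ hypothesis $\mathcal{K}^{\ast}$ does not directly control when $q>3$; but interpolating against the trivial bound $\Vert\sum_{T}\mathbf{1}_{T}\Vert_{\infty}\lesssim\delta^{-2}$ and tracking the parabolic rescaling $\xi\mapsto 2^{2s}\xi$ shows the net power of $2^{s}$ is $-3+9/q+O(\varepsilon)\leq O(\varepsilon)$, nonpositive precisely for $q\geq 3$ --- so Loomis--Whitney is not actually load-bearing there, and invoking it obscures why the arithmetic closes. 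Second, in the converse your parenthetical ``interpolation from $q>3$ down to $q=3$'' points the wrong way: under the square-function correspondence the target $L^{3/2}$ Kakeya exponent matches $q/3=3/2$, i.e.\ $q=9/2$, so the Bourgain--Guth iteration should be set up to land there rather than at the endpoint $q=3$ (where $2^{\varepsilon s}$ losses cannot be absorbed). Neither issue shows the approach is wrong, but both are of the kind that would need to be resolved before the sketch constitutes a proof, and neither can be checked against this paper since it defers entirely to \cite{RiSa2}.
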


\subsection{Conversion to an unmodulated trilinear Fourier conjecture}

We now reformulate the trilinear Fourier extension inequality in such a way
that the role of the coefficients of the doubly smooth Alpert projections are
emphasized. For this we draw an analogy between Alpert projections and the
classical homogenous expansions of $L^{2}\left(  \mathbb{S}^{2}\right)  $
functions into homogeneous polynomials of degree $k$ on the sphere
$\mathbb{S}^{2}$.

We say that a linear combination
\[
f=\sum_{I\in\mathcal{G}_{s}\left[  U\right]  }b_{I}h_{I;\kappa}^{\eta}%
\]
of doubly smooth Alpert wavelets $h_{I;\kappa}^{\eta}$ with $I\in
\mathcal{G}_{s}\left[  U\right]  $, is an \emph{Alpert polynomial at scale
}$s$ having $\kappa$ vanishing moments. Moreover, we say that $f=\sum
_{I\in\mathcal{G}_{s}\left[  U\right]  }b_{I}h_{I;\kappa}^{\eta}$ is a
\emph{subunit} Alpert polynomial at scale $s$ if $\left\Vert f\right\Vert
_{L^{\infty}\left(  U\right)  }\leq1$.

Then we can reformulate the single scale disjoint\emph{\ }trilinear inequality
$\mathcal{E}_{\operatorname*{disj}\nu}^{\delta,\kappa}\left(  \otimes
_{3}L^{\infty}\rightarrow L^{\frac{q}{3}};\varepsilon\right)  $ in the
following equivalent form, whose proof is a simple exercise we will not give here.

\begin{lemma}
Let $1<q<\infty$ and $0<\varepsilon,\delta,\nu<1$ and $\kappa\in\mathbb{N}$.
Then the single scale disjoint\emph{\ }trilinear inequality $\mathcal{E}%
_{\operatorname*{disj}\nu}^{\delta,\kappa}\left(  \otimes_{3}L^{\infty
}\rightarrow L^{\frac{q}{3}};\varepsilon\right)  $ holds \emph{if and only if}
there is a positive constant $C_{q,\delta,\kappa,\varepsilon,\nu}$ such that%
\[
\left\Vert \mathcal{E}f_{1}\ \mathcal{E}f_{2}\ \mathcal{E}f_{3}\right\Vert
_{L^{\frac{q}{3}}\left(  B_{\mathbb{R}^{3}}\left(  0,2^{\frac{s}{1-\delta}%
}\right)  \right)  }\leq C_{q,\delta,\kappa,\varepsilon,\nu}2^{\varepsilon
s}\ ,
\]
holds for all $s\in\mathbb{N}$, $\varepsilon>0$, all subunit Alpert
polynomials $f_{k}$ of scale $s$ with $\kappa$ vanishing moments, and all
$\nu$-disjoint triples $\left(  U_{1},U_{2},U_{3}\right)  \in U^{3}$.
\end{lemma}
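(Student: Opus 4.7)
The equivalence is driven by two structural properties of the smooth Alpert pseudoprojection that are intrinsic to the construction of \cite{Saw7}: first, $\mathsf{Q}_{s,U}^{\eta}$ acts as the identity on the span of the smooth Alpert wavelets at scale $s$ --- equivalently, one has the biorthogonality $\langle h_{I;\kappa}^{\eta}, h_{J;\kappa}\rangle = \delta_{IJ}$ for $I,J \in \mathcal{G}_{s}[U]$, so that $\mathsf{Q}_{s,U}^{\eta}f = f$ whenever $f = \sum_{I\in\mathcal{G}_{s}[U]} b_{I} h_{I;\kappa}^{\eta}$; second, $\mathsf{Q}_{s,U}^{\eta}$ is bounded on $L^{\infty}(U)$ with a constant $C_{\kappa}$ independent of the scale $s$, coming from the $L^{2}$-normalization, the finite overlap and the rapid decay of the smooth wavelets $h_{I;\kappa}^{\eta}$.

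Granted these two facts, both directions of the equivalence are immediate. For the implication from the pseudoprojection form to the polynomial form, one takes subunit Alpert polynomials $f_{1},f_{2},f_{3}$ at scale $s$, views each $f_{k}$ as an $L^{\infty}(U_{k})$ function of norm at most $1$, and uses idempotency to rewrite $f_{k} = \mathsf{Q}_{s,U_{k}}^{\eta} f_{k}$; plugging these into the pseudoprojection inequality produces the polynomial inequality with the same constant. For the reverse, given $g_{k} \in L^{\infty}(U_{k})$ the function $f_{k} := \mathsf{Q}_{s,U_{k}}^{\eta} g_{k}$ is by construction an Alpert polynomial at scale $s$, and the $L^{\infty}$-boundedness gives $\|f_{k}\|_{L^{\infty}(U_{k})} \leq C_{\kappa}\|g_{k}\|_{L^{\infty}(U_{k})}$; applying the polynomial inequality to the rescaled subunit polynomials $f_{k}/(C_{\kappa}\|g_{k}\|_{L^{\infty}})$ and clearing denominators yields the pseudoprojection inequality with constant enlarged by $C_{\kappa}^{3}$.

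The only content lies in the idempotency property, which I expect to be the main obstacle. If \cite{Saw7} supplies genuine biorthogonality between the smooth and non-smooth Alpert wavelet systems at a single scale, the property is immediate and the lemma is exactly the routine exercise the author bypasses. Otherwise one would have to invert the single-scale Gram matrix $G_{IJ} = \langle h_{I;\kappa}^{\eta}, h_{J;\kappa}\rangle$, which remains well-conditioned by the locality and scale homogeneity of the wavelets, or equivalently choose $g_{k} = \sum_{I} b_{I} h_{I;\kappa}$ in the forward direction and separately compare $\|\sum_{I} b_{I} h_{I;\kappa}\|_{L^{\infty}}$ with $\|\sum_{I} b_{I} h_{I;\kappa}^{\eta}\|_{L^{\infty}}$ by a purely local computation. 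In either case the argument is scale-free, and once the idempotency is granted the reformulation is a tautology.
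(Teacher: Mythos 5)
The paper offers no proof of this lemma, explicitly deferring it as a ``simple exercise,'' so there is nothing to compare against line by line; your reconstruction identifies the correct structural skeleton: idempotency of $\mathsf{Q}_{s,U}^{\eta}$ on the span of scale-$s$ smooth Alpert wavelets in one direction, and $L^{\infty}$-boundedness of $\mathsf{Q}_{s,U}^{\eta}$ in the other. Two caveats, both of which you partially anticipate. First, the biorthogonality $\langle h_{I;\kappa}^{\eta}, h_{J;\kappa}\rangle = \delta_{IJ}$ is not exact on the diagonal. The off-diagonal entries do vanish, because the squares in $\mathcal{G}_{s}[U]$ are taken $2^{-s}$-separated so that $h_{I;\kappa}^{\eta}$ and $h_{J;\kappa}$ have disjoint supports for $I \neq J$; but $\langle h_{I;\kappa}^{\eta}, h_{I;\kappa}\rangle = c_{\flat} \approx 1$ is a fixed constant not equal to $1$, exactly as in (\ref{pseudo}) for the father wavelets. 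This is harmless (divide by $c_{\flat}$), and your fallback of choosing $g_{k} = \sum_{I} b_{I} h_{I;\kappa}$ in the forward direction---exploiting the genuine orthonormality of the non-smooth system $\{h_{I;\kappa}\}_{I \in \mathcal{G}_{s}[U]}$ and then comparing $\|\sum_{I} b_{I} h_{I;\kappa}\|_{L^{\infty}}$ with $\|\sum_{I} b_{I} h_{I;\kappa}^{\eta}\|_{L^{\infty}}$ by a local scale-free computation---is the cleaner route and sidesteps $c_{\flat}$ entirely. Second, the paper's two displayed expressions for $\bigtriangleup_{I;\kappa}^{\eta}$ disagree on whether the inner product carries $(S_{\eta}^{\mathcal{D}})^{-1}$. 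Your finite-overlap-plus-$L^{2}$-normalization argument for $L^{\infty}$-boundedness is exactly right for the version $\bigtriangleup_{I;\kappa}^{\eta}f = \langle f, h_{I;\kappa}\rangle h_{I;\kappa}^{\eta}$; if the $(S_{\eta}^{\mathcal{D}})^{-1}$ is retained, one additionally needs that $(S^{\ast})^{-1}h_{I;\kappa}$ has $L^{1}$-norm comparable to $|I|^{1/2}$, which follows from the well-localization machinery of Lemma \ref{T inv well} but is not immediate from finite overlap alone. With either reading the argument goes through, and your plan is essentially the intended exercise.
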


We now wish to reformulate the definition of the Kakeya analogue
$\mathcal{A}_{\operatorname*{disj}\nu}^{\operatorname*{square}}\left(
\otimes_{3}L^{\infty}\rightarrow L^{\frac{q}{3}};\varepsilon\right)  $ in a
corresponding way. This will prove much more problematic as we must convert
the modulated father wavelets into smooth Alpert wavelets, and it is here that
we will need to use the more general \emph{doubly} smooth Alpert
decompositions introduced below.

Heuristically, we expect that the smooth Alpert expansion of a modulation of
frequency $2^{2s}$ times a father wavelet of scale $s\in\mathbb{N}$, would
have its scales concentrated near $2s$, and this is indeed what we shall show first.

Let $t=2s\in2\mathbb{N}$ be even. Then we say that a subunit Alpert polynomial
$f=\sum_{J\in\mathcal{G}_{t}\left[  U\right]  }a_{J}h_{J;\kappa}^{\eta}$ at
scale $t=2s$ with $\kappa$ vanishing moments is of \emph{Kakeya type} if $f$
has the special form%
\begin{equation}
f\left(  x\right)  =\sum_{I\in\mathcal{G}_{s}\left[  U\right]  }b_{I}%
\sum_{J\in\mathcal{G}_{t}\left[  I\right]  }e^{ic_{J}\cdot u_{I}}h_{J;\kappa
}^{\eta}\left(  x\right)  ,\label{special}%
\end{equation}
for some $b_{I}\in\mathbb{R}$ and $u_{I}\in\mathbb{R}^{2}$, in which the
restriction of $f$ to a square $I\in\mathcal{G}_{s}\left[  U\right]  $ is a
constant times a canonical modulation sequence $\left\{  e^{ic_{J}\cdot u_{I}%
}\right\}  _{I\in\mathcal{G}_{s}\left[  U\right]  \text{ and }J\in
\mathcal{G}_{t}\left[  U\right]  }$, where $c_{J}$ is the center of the square
$J$, inner product with the sequence of doubly smooth Alpert wavelets
$\left\{  h_{J;\kappa}^{\eta}\right\}  _{J\in\mathcal{G}_{t}\left[  U\right]
} $ at scale $t$. Then we define a `martingale transform' of such a function
$f $ to be%
\[
M_{\pm}f\left(  x\right)  =\sum_{I\in\mathcal{G}_{s}\left[  U\right]  }\pm
b_{I}\sum_{J\in\mathcal{G}_{t}\left[  I\right]  }e^{ic_{J}\cdot u_{I}%
}h_{J;\kappa}^{\eta}\left(  x\right)  ,
\]
where the choice of $\mathcal{+}$ and $-$ is random, and only acts on the
squares $I$ at scale $s$.

\begin{definition}
\label{def B}Let $1<q<\infty$ and $0<\varepsilon,\nu<1$. We say the statement
$\mathcal{B}_{\operatorname*{disj}\nu}^{\operatorname*{square}}\left(
\otimes_{3}L^{\infty}\rightarrow L^{\frac{q}{3}};\varepsilon\right)  $ holds
if there is a positive constant $C_{q,\varepsilon,\nu}$ depending only on $q$,
$\varepsilon$ and $\nu$, such that,%
\begin{equation}
\mathbb{E}_{\pm,\pm,\pm}\left\Vert \mathcal{E}M_{\pm}f_{1}\ \mathcal{E}M_{\pm
}f_{2}\ \mathcal{E}M_{\pm}f_{3}\right\Vert _{L^{\frac{q}{3}}\left(
\mathbb{R}^{3}\right)  }\leq C_{q,\varepsilon,\nu}2^{\varepsilon
t}\ ,\label{expect}%
\end{equation}
holds for all $t\in2\mathbb{N}$ with $2^{-t}\leq\nu$, all subunit Alpert
polynomials at scale $t$ of Kakeya type $f_{k}$, and all $\nu$-disjoint
triples $\left(  U_{1},U_{2},U_{3}\right)  \subset U^{3}$.
\end{definition}

Here $\mathbb{E}_{\pm,\pm,\pm}$ denotes the expectation taken over independent
copies of uniformly distributed $\pm$ signs. Khintchine's inequalities show
that the probabilistic inequality (\ref{expect}) is equivalent to a square
function inequality%
\begin{equation}
\left\Vert \mathcal{S}_{\operatorname{Kakeya}}^{t,\mathbf{u}_{1}}%
f_{1}\ \mathcal{S}_{\operatorname{Kakeya}}^{t,\mathbf{u}_{2}}f_{2}%
\ \mathcal{S}_{\operatorname{Kakeya}}^{t,\mathbf{u}_{3}}f_{3}\right\Vert
_{L^{\frac{q}{3}}\left(  \mathbb{R}^{3}\right)  }\leq C_{q,\varepsilon,\nu
}2^{\varepsilon t}\ ,\label{Kak square equiv}%
\end{equation}
where the Kakeya square function $\mathcal{S}_{\operatorname{Kakeya}%
}^{t,\mathbf{u}}f $ is defined on a subunit Alpert polynomial at scale $t$ of
Kakeya type $f$ as in (\ref{special}) by%
\begin{equation}
\mathcal{S}_{\operatorname{Kakeya}}^{t,\mathbf{u}}f\equiv\left(  \sum
_{I\in\mathcal{G}_{s}\left[  U\right]  }\left\vert b_{I}\mathcal{E}\left(
\sum_{J\in\mathcal{G}_{t}\left[  I\right]  }e^{ic_{J}\cdot u_{I}}h_{J;\kappa
}^{\eta}\right)  \right\vert ^{2}\right)  ^{\frac{1}{2}},\ \ \ \ \ \text{where
}s=\frac{1}{2}t.\label{Kakeya square}%
\end{equation}
Note that the square function applies only to the `outer' layer parameterized
by $I\in\mathcal{G}_{s}\left[  U\right]  $, while the `inner' layers that are
parameterized by $J\in\mathcal{G}_{t}\left[  I\right]  $ remain linear.

Our main theorem is the equivalence of $\mathcal{A}_{\operatorname*{disj}\nu
}^{\operatorname*{square}}\left(  \otimes_{3}L^{\infty}\rightarrow L^{\frac
{q}{3}};\varepsilon\right)  $ for all $\varepsilon>0$ and $\mathcal{B}%
_{\operatorname*{disj}\nu}^{\operatorname*{square}}\left(  \otimes
_{3}L^{\infty}\rightarrow L^{\frac{q}{3}};\varepsilon\right)  $ for all
$\varepsilon>0$.

\begin{theorem}
[Conversion Theorem]The modulated condition $\mathcal{A}_{\operatorname*{disj}%
\nu}^{\operatorname*{square}}\left(  \otimes_{3}L^{\infty}\rightarrow
L^{\frac{q}{3}};\varepsilon\right)  $ in Definition \ref{def A} holds for all
$\varepsilon>0$ \emph{if and only if} the unmodulated condition $\mathcal{B}%
_{\operatorname*{disj}\nu}^{\operatorname*{square}}\left(  \otimes
_{3}L^{\infty}\rightarrow L^{\frac{q}{3}};\varepsilon\right)  $ in Definition
\ref{def B} holds for all $\varepsilon>0$.
\end{theorem}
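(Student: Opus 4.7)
The plan is to exploit the heuristic already flagged in the introduction: a modulated father wavelet $e^{iu_I\cdot \Phi(x)}\varphi_I(x)$ at scale $s$ with frequency $|u_I|\approx 2^t=2^{2s}$ is, up to errors decaying in the number of vanishing moments, pointwise comparable to the Kakeya-type Alpert block
\[
\sum_{J\in\mathcal{G}_t[I]} e^{ic_J\cdot u_I}\, h_{J;\kappa}^{\eta}(x),
\]
because on each child $J\in\mathcal{G}_t[I]$ the linear phase $e^{iu_I\cdot(x-c_J)}$ differs from $1$ by $O(|u_I|\,2^{-t})=O(1)$, while the doubly smooth Alpert wavelet $h_{J;\kappa}^{\eta}$ reproduces the restriction of $\varphi_I$ to $J$ up to tails controlled by $\kappa$ vanishing moments. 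First I would make this quantitative by expanding $\varphi_I$ in its Alpert series at scale $t$, Taylor expanding the linear phase on each scale-$t$ subsquare, and using the moment cancellation to bound the discrepancy uniformly in $I$ by an acceptable quantity of the form $C_\kappa 2^{-\kappa(t-s)}$.

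Having established this block-by-block correspondence, I would pass to the Fourier side via the identity
\[
\mathcal{E}\bigl(e^{iu_I\cdot\Phi}\varphi_I\bigr)(\xi)=\tau_{u_I}\mathcal{E}\varphi_I(\xi),
\]
which converts physical-side modulation into $\xi$-translation and matches the translations $\tau_{u_I}$ appearing in $\mathcal{S}_{\operatorname{Fourier}}^{s,\varphi,\mathbf{u}}$. Identifying the coefficients $b_I$ of a Kakeya-type polynomial with the inner products $\langle g,\varphi_I\rangle$ (up to the normalization $c_\flat$ of \eqref{pseudo}) then shows that $\mathcal{S}_{\operatorname{Fourier}}^{s,\varphi,\mathbf{u}}g$ and $\mathcal{S}_{\operatorname{Kakeya}}^{t,\mathbf{u}}f$ are pointwise comparable, up to the small error above. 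To move between the randomized $\pm$ expectation form \eqref{expect} and the square function form \eqref{Kak square equiv} I would invoke Khintchine in its tensor form for three independent sign sequences, as the paper already indicates.

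With these tools both directions of the equivalence become formal. For $\mathcal{A}\Rightarrow\mathcal{B}$: given a Kakeya-type subunit $f_k$ with coefficients $b_I$, set $g_k=c_\flat^{-1}\sum_I b_I\varphi_I$ (which is subunit in $L^\infty$ up to constants), transfer the Kakeya square function bound on $f_k$ to a Fourier square function bound on $g_k$ via the comparison, then apply $\mathcal{A}_{\operatorname*{disj}\nu}^{\operatorname*{square}}$ at scale $s=t/2$, halving $\varepsilon$ to absorb the $2^{\varepsilon t}$ versus $2^{\varepsilon s}$ discrepancy. For $\mathcal{B}\Rightarrow\mathcal{A}$: given arbitrary $L^\infty$ data $f_k$, form the Kakeya-type polynomial $F_k=\sum_I\langle f_k,\varphi_I\rangle\sum_{J\in\mathcal{G}_t[I]} e^{ic_J\cdot u_I}h_{J;\kappa}^{\eta}$, apply $\mathcal{B}$ in its square function form to the $F_k$, and again use the comparison in the reverse direction.

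The main obstacle is rigorously controlling the residual error across all $I$ simultaneously and inside the $L^{q/3}$ norm of a triple product. The pointwise error on a single $I$ is $O(2^{-\kappa(t-s)})=O(2^{-\kappa s})$, but inside the square function this is summed over $I\in\mathcal{G}_s[U]$ (of cardinality $\approx 2^{2s}$) and then tensored with two other slots, so I must verify that the choice of sufficiently many vanishing moments dominates the cumulative polynomial loss. I would handle this by fixing a small $\varepsilon'<\varepsilon$, absorbing the polynomial factors into $2^{\varepsilon' s}$, and then using the hypothesis that $\mathcal{A}$ (respectively $\mathcal{B}$) holds for \emph{all} positive $\varepsilon$ to close the loop. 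The disjointness parameters require a minor adjustment — replacing $\nu$ on one side by $\nu^{1/2}$ or a constant multiple to accommodate the change of scale from $s$ to $t$ — but this is absorbed into the quantifiers of the conjectures since both are required to hold for \emph{every} sufficiently small $\nu$.
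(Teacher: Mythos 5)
Your central mechanism — a pointwise comparison between the modulated father wavelet $e^{iu_I\cdot x}\varphi_I(x)$ and the Kakeya block $\sum_{J\in\mathcal{G}_t[I]}e^{ic_J\cdot u_I}h_{J;\kappa}^{\eta}(x)$ — does not hold, and the paper does not attempt it. With $|u_I|\approx 2^{t}$ and $|x-c_J|\lesssim 2^{-t}$, the phase $e^{iu_I\cdot(x-c_J)}$ swings by $O(1)$ on each $J$; truncating its Taylor expansion at degree $\kappa$ leaves an error of order $1/\kappa!$, uniform in $s$, not of order $2^{-\kappa(t-s)}$. More decisively, $h_{J;\kappa}^{\eta}$ has $\kappa$ vanishing moments while $e^{iu_I\cdot x}\varphi_I(x)$ has nonvanishing average on each $J$ (since $\varphi_I$ is essentially constant there and the phase has nonzero mean over a single wavelength), so the two objects cannot be pointwise close. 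The correct statement lives entirely at the level of expansion coefficients in the \emph{doubly smooth} Alpert frame, and this is where the paper does its real work: it sets up the reproducing formula $g=\sum_K\langle T^{-1}g,h_{K;\kappa}^{\eta}\rangle h_{K;\kappa}^{\eta}$ with $T=SS^{\ast}$, proves $T^{-1}$ is $(\kappa-3)$-well-localized (Lemma~\ref{T inv well}), uses Lemma~\ref{scales} to show the coefficients $\langle\mathsf{M}_u^s\bigtriangleup_I^{\varphi}f,h_{J;\kappa}^{\eta}\rangle$ concentrate at scales near $t=2s$, and then verifies via a translation-invariance argument for $T^{-2}$ that the resulting coefficients $\gamma_K$ inherit the Kakeya modulation law $\gamma_{\tau_{c_J}K}=e^{-iu_I\cdot c_J}\gamma_K$. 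None of this frame machinery (non-orthogonality, $T^{-1}$, well-localization) appears in your proposal, and without it the ``coefficient identification'' you invoke is not justified.

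A second gap: after the coefficient-level conversion you still face a genuine multiscale-to-single-scale passage. The doubly smooth Alpert expansion of $\mathsf{M}_u^s\bigtriangleup_I^{\varphi}f$ produces Kakeya-type blocks at all scales within tree-distance $N$ of $t$ (the tails being absorbed into the $\operatorname*{GeoFourDec}$ error), whereas the condition $\mathcal{B}^{\operatorname*{square}}_{\operatorname*{disj}\nu}$ is a \emph{single-scale} inequality. The paper devotes a separate subsection to reducing the multiscale output to a single-scale input via the Kakeya square function equivalence~(\ref{Kak square equiv}) and a Bourgain--Guth pigeonholing argument with parabolic rescaling (inherited from~\cite{TaVaVe} and~\cite{RiSa2}), which requires checking that the class of subunit Kakeya-type Alpert polynomials is parabolically invariant. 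Your proposal treats the comparison as if it produced a single fixed scale $t$ and says nothing about this reduction. Finally, the adjustment you suggest to $\nu$ is unnecessary: the disjointness parameter $\nu$ governs the separation of the ambient squares $U_1,U_2,U_3$, not the wavelet scale, so nothing changes there when passing between $s$ and $t=2s$.
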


Our proof will make liberal use of error bounds of the following form.

\begin{notation}
\label{geo Four dec}We say that a subunit Alpert polynomial $f$ at scale
$s\in\mathbb{N}$ has \emph{geometric Fourier decay}, denoted
$\operatorname*{GeoFourDec}$, if the $L^{q}$ norm of the Fourier extension
$\mathcal{E}f$ is controlled by the appropriate growth constant, i.e.%
\[
\left\Vert \mathcal{E}f\right\Vert _{L^{q}\left(  \mathbb{R}^{3}\right)  }\leq
C_{\varepsilon}2^{\varepsilon s},
\]
where the constant $C$ may depend on other relevant quantities, but is
independent of $s\in\mathbb{N}$.
\end{notation}

A typical Alpert polynomial to which we apply this notation has the form%
\[
f=\sum_{I\in\mathcal{G}_{s}\left[  U\right]  }b_{I}h_{I;\kappa}^{\eta}\ ,
\]
so that we need only apply the trivial $L^{1}\rightarrow L^{\infty}$ estimate
for the Fourier transform to $\mathcal{E}f=\sum_{I\in\mathcal{G}_{s}\left[
U\right]  }b_{I}\mathcal{E}h_{I;\kappa}^{\eta}$ to obtain%
\[
\left\Vert \mathcal{E}f\right\Vert _{L^{q}\left(  \mathbb{R}^{3}\right)  }%
\leq\sum_{I\in\mathcal{G}_{s}\left[  U\right]  }\left\vert b_{I}\right\vert
\left\Vert \mathcal{E}h_{I;\kappa}^{\eta}\right\Vert _{L^{q}\left(
\mathbb{R}^{3}\right)  }%
\]
where the nonzero coefficients $b_{I}$ are assumed to have sufficiently rapid
decay to compensate the norms $\left\Vert \mathcal{E}h_{I;\kappa}^{\eta
}\right\Vert _{L^{q}\left(  \mathbb{R}^{3}\right)  }$. Details of this sort of
estimate will appear during the course of the proof.

\subsection{A comparison of trilinear characterizations}

We can now use the Conversion Theorem to compare the Fourier extension and
Kakeya conjectures by examining the following testing inequalities that
characterize these conjectures respectively:%
\begin{align}
& \left\Vert \mathcal{E}f_{1}\ \mathcal{E}f_{2}\ \mathcal{E}f_{3}\right\Vert
_{L^{\frac{q}{3}}\left(  B_{\mathbb{R}^{3}}\left(  0,2^{\frac{s}{1-\delta}%
}\right)  \right)  }\leq C_{q,\delta,\kappa,\varepsilon,\nu}2^{\varepsilon
s},\label{Four}\\
& \text{for all subunit Alpert polynomials }f_{k}\text{ at scale }s\text{, and
for all }s\in\mathbb{N},\nonumber
\end{align}
and%
\begin{align}
& \mathbb{E}_{\pm,\pm,\pm}\left\Vert \mathcal{E}M_{\pm}f_{1}\ \mathcal{E}%
M_{\pm}f_{2}\ \mathcal{E}M_{\pm}f_{3}\right\Vert _{L^{\frac{q}{3}}\left(
\mathbb{R}^{3}\right)  }\leq C_{q,\varepsilon,\nu}2^{\varepsilon
t}\ ,\label{Kak}\\
& \text{for all subunit Alpert polynomials }f_{k}\text{ at scale }t\text{
\textbf{of Kakeya type}, and for all }t\in2\mathbb{N},\nonumber
\end{align}
where in both inequalities the appropriate assumptions on parameters
$s,t,q,\delta,\kappa,\varepsilon,\nu$ are in force.

Of course (\ref{Four}) implies (\ref{Kak}) by Khintchine's inequality and the
fact that the functions are of Kakeya type. Moreover (\ref{Kak}) is weaker
than (\ref{Four}) in \textbf{two} distinct ways, namely

\begin{enumerate}
\item the restrictions of the subunit Alpert polynomials $f$ to squares
$I\in\mathcal{G}_{s}\left[  U\right]  $ in (\ref{Kak}) have the special form
of a canonical modulation,

\item and the norms in (\ref{Kak}) are subjected to a expectation over special
martingale transforms of the subunit Alpert polynomials of Kakeya type.
\end{enumerate}

It is clear from parabolic rescaling, that if we drop the first requirement on
restrictions to modulations, then the corresponding testing condition that
assumes only the second requirement (2), is equivalent to
inequality(\ref{Four}). If instead we drop only the second requirement, then
the corresponding testing condition that assumes only the first requirement
(1), implies inequality (\ref{Kak}) and is implied by inequality (\ref{Four}).
But we do not know if it is equivalent to (\ref{Kak}).

Thus we have three trilinear characterizations of the Kakeya maximal operator
conjecture, namely the two Fourier type inequalities (\ref{single tri Four})
and (\ref{expect}), as well as the geometric/combinatoric trilinear
characterization in \cite{RiSa2}, namely that there is a positive constant
$C_{\varepsilon,\nu}$ such that\footnote{see \cite{RiSa2} for notation}%
\begin{align}
&  \left\Vert \prod_{k=1}^{3}\left(  \sum_{T_{k}\in\mathbb{T}_{k}}%
\mathbf{1}_{T_{k}}\right)  \right\Vert _{L^{\frac{1}{2}}\left(  \mathbb{R}%
^{3}\right)  }\leq C_{\varepsilon,\nu}\delta^{-\varepsilon}%
,\label{tri Kak dual}\\
&  \text{for all }\nu\text{\emph{-disjoint }families }\mathbb{T}_{k}\text{ of
}\delta\text{-separated }\delta\text{-tubes in }\mathbb{R}^{3}\text{ and
}0<\delta\leq\nu.\nonumber
\end{align}

It is unclear whether or not the two trilinear Fourier characterizations could
be any easier to establish than the trilinear geometric/combinatoric
characterization, and even whether or not the trilinear characterization
(\ref{tri Kak dual}) has a significant advantage over the bilinear
characterization proved much earlier in Tao, Vargas and Vega \cite{TaVaVe}
(apart from the fact that disjoint trilinear is implied by bilinear).
Nevertheless, the characterization using (\ref{Kak}) provides a common playing
field in which to compare the Fourier extension and Kakeya maximal operator
conjectures, and perhaps shed some light on the still open problem of the
equivalence of the Fourier extension and Kakeya strong maximal operator conjectures.

Moreover, the above comments suggest a possible approach to proving the
Fourier extension theorem for the paraboloid $\mathbb{P}^{2}$. Namely, we
decompose a general sequence of coefficients defined on $\mathcal{G}%
_{s}\left[  U\right]  $ into a sum of canonical sequences using (\ref{dec})
below, and then by `reversing' some of the arguments in the proof of the
Conversion Theorem below, we localize the Fourier extension operator applied
to these canonical sequences times smooth Alpert wavelets, using the
translation invariance of the wavelets, in the hope of making favourable norm estimates.

The aforementioned decomposition is obtained by considering the Hilbert space
$\ell^{2}\left(  \mathbb{Z}_{N}^{2}\right)  $ where $\mathbb{Z}_{N}^{2}%
\equiv\left\{  -N,-N+1,...N-1,N\right\}  ^{2}$, and then rescaling the
orthonormal basis $\left\{  \varphi^{m}\right\}  _{m\in\mathbb{Z}_{N}^{2}}$ of
canonical sequences $\varphi^{m}=\left\{  \varphi_{n}^{m}\right\}
_{n\in\mathbb{Z}_{N}^{2}}$ defined by $\varphi_{n}^{m}\equiv\frac{e^{2\pi
i\frac{n\cdot m}{2N+1}}}{2N+1}$. Indeed,
\begin{align*}
& \left\langle \varphi^{m},\varphi^{m^{\prime}}\right\rangle _{\ell^{2}\left(
\mathbb{Z}_{N}^{2}\right)  }=\sum_{n\in\mathbb{Z}_{N}^{2}}\varphi_{n}%
^{m}\varphi_{n}^{m^{\prime}}=\sum_{n\in\mathbb{Z}_{N}^{2}}\frac{e^{2\pi
in\cdot\frac{m-m^{\prime}}{2N+1}}}{\left(  2N+1\right)  ^{2}}\\
& =\frac{1}{\left(  2N+1\right)  ^{2}}\frac{\sin\left[  2\pi\left(  N+\frac
{1}{2}\right)  \left(  \frac{m_{1}-m_{1}^{\prime}}{2N+1}\right)  \right]
}{\sin\left[  \pi\left(  \frac{m_{1}-m_{1}^{\prime}}{2N+1}\right)  \right]
}\frac{\sin\left[  2\pi\left(  N+\frac{1}{2}\right)  \left(  \frac{m_{2}%
-m_{2}^{\prime}}{2N+1}\right)  \right]  }{\sin\left[  \pi\left(  \frac
{m_{2}-m_{2}^{\prime}}{2N+1}\right)  \right]  },
\end{align*}
shows that $\left\{  \varphi^{m}\right\}  _{m\in\mathbb{Z}_{N}^{2}}$ is an
orthonormal basis of $\ell^{2}\left(  \mathbb{Z}_{N}^{2}\right)  $, and thus
for $\mathbf{a}=\left\{  a_{n}\right\}  _{n\in\mathbb{Z}_{N}^{2}}\in\ell
^{2}\left(  \mathbb{Z}_{N}^{2}\right)  $, we have%
\begin{equation}
\mathbf{a}=\sum_{m\in\mathbb{Z}_{N}^{2}}\left\langle \mathbf{a},\varphi
^{m}\right\rangle \varphi^{m},\label{dec}%
\end{equation}
which represents $\mathbf{a}$ as a linear combination of canonical sequences
$\varphi^{m}$ with $\sum_{m\in\mathbb{Z}_{N}^{2}}\left\vert \left\langle
\mathbf{a},\varphi^{m}\right\rangle \right\vert ^{2}=\left\vert \mathbf{a}%
\right\vert _{\ell^{2}\left(  \mathbb{Z}_{N}^{2}\right)  }^{2}$.

\section{Supporting theorems}

In this section we work in $n\geq2$ dimensions, as the three dimensional case
is no different.

\subsection{Doubly smooth Alpert frames}

We begin with the construction from \cite{Saw7} of smooth Alpert projections
$\left\{  \bigtriangleup_{Q;\kappa}^{\eta}\right\}  _{Q\in\mathcal{D}}$ and
corresponding wavelets $\left\{  h_{Q;\kappa}^{a,\eta}\right\}  _{Q\in
\mathcal{D},\ a\in\Gamma_{n}}$ of order $\kappa$ in $n$-dimensional space
$\mathbb{R}^{n}$, provided $\eta>0$ is sufficiently small and $\kappa
\in\mathbb{N}$ is sufficiently large. First, we recall that the (unsmoothed)
Alpert wavelets $\left\{  h_{Q;\kappa}^{a}\right\}  _{a\in\Gamma}$ constructed
in \cite{RaSaWi} are an orthonormal basis for the finite dimensional vector
subspace of $L^{2}$ that consists of linear combinations of the indicators
of\ the children $\mathfrak{C}\left(  Q\right)  $ of $Q$ multiplied by
polynomials of degree at most $\kappa-1$, and such that the linear
combinations have vanishing moments on the cube $Q$ up to order $\kappa-1$:%
\[
L_{Q;k}^{2}\left(  \mu\right)  \equiv\left\{
f=\mathop{\displaystyle \sum }\limits_{Q^{\prime}\in\mathfrak{C}\left(
Q\right)  }\mathbf{1}_{Q^{\prime}}p_{Q^{\prime};k}\left(  x\right)  :\int
_{Q}f\left(  x\right)  x_{i}^{\ell}d\mu\left(  x\right)  =0,\ \ \ \text{for
}0\leq\ell\leq k-1\text{ and }1\leq i\leq n\right\}  ,
\]
where $p_{Q^{\prime};k}\left(  x\right)  =\sum_{\alpha\in\mathbb{Z}_{+}%
^{n}:\left\vert \alpha\right\vert \leq k-1\ }a_{Q^{\prime};\alpha}x^{\alpha}$
is a polynomial in $\mathbb{R}^{n}$ of degree $\left\vert \alpha\right\vert
=\alpha_{1}+...+\alpha_{n}$ at most $\kappa-1$, and $x^{\alpha}=x_{1}%
^{\alpha_{1}}x_{2}^{\alpha_{2}}...x_{n-1}^{\alpha_{n-1}}$. Let $d_{Q;\kappa
}\equiv\dim L_{Q;\kappa}^{2}\left(  \mu\right)  $ be the dimension of the
finite dimensional linear space $L_{Q;\kappa}^{2}\left(  \mu\right)  $.
Moreover, for each $a\in\Gamma_{n}$, we may assume the wavelet $h_{Q;\kappa
}^{a}$ is a translation and dilation of the unit wavelet $h_{Q_{0};\kappa}%
^{a}$, where $Q_{0}=\left[  0,1\right)  ^{n}$ is the unit cube in
$\mathbb{R}^{n}$. The Alpert projection $\bigtriangleup_{Q;\kappa}$ onto
$L_{Q;k}^{2}\left(  \mu\right)  $ is given by $\bigtriangleup_{Q;\kappa}%
f=\sum_{a\in\Gamma_{n}}\left\langle f,h_{Q;\kappa}^{a}\right\rangle
h_{Q;\kappa}^{a}$.

Given a small positive constant $\eta>0$, define a smooth approximate identity
by $\phi_{\eta}\left(  x\right)  \equiv\eta^{-n}\phi\left(  \frac{x}{\eta
}\right)  $ where $\phi\in C_{c}^{\infty}\left(  B_{\mathbb{R}^{n}}\left(
0,1\right)  \right)  $ has unit integral, $\int_{\mathbb{R}^{n}}\phi\left(
x\right)  dx=1$, and vanishing moments of \emph{positive} order less than
$\kappa$, i.e.
\begin{equation}
\int\phi\left(  x\right)  x^{\gamma}dx=\delta_{\left\vert \gamma\right\vert
}^{0}=\left\{
\begin{array}
[c]{ccc}%
1 & \text{ if } & \left\vert \gamma\right\vert =0\\
0 & \text{ if } & 0<\left\vert \gamma\right\vert <\kappa
\end{array}
\right.  .\label{van pos}%
\end{equation}
The \emph{smooth} Alpert `wavelets' were then defined in \cite{Saw7} by%
\[
h_{Q;\kappa}^{a,\eta}\equiv h_{Q;\kappa}^{a}\ast\phi_{\eta\ell\left(
Q\right)  },
\]
and we have for $0\leq\left\vert \beta\right\vert <\kappa$,%
\begin{align*}
& \int h_{Q;\kappa}^{a,\eta}\left(  x\right)  x^{\beta}dx=\int\phi_{\eta
\ell\left(  I\right)  }\ast h_{Q;\kappa}^{a}\left(  x\right)  x^{\beta}%
dx=\int\int\phi_{\eta\ell\left(  I\right)  }\left(  y\right)  h_{Q;\kappa}%
^{a}\left(  x-y\right)  x^{\beta}dx\\
& =\int\phi_{\eta\ell\left(  I\right)  }\left(  y\right)  \left\{  \int
h_{Q;\kappa}^{a}\left(  x-y\right)  x^{\beta}dx\right\}  dy=\int\phi_{\eta
\ell\left(  I\right)  }\left(  y\right)  \left\{  \int h_{Q;\kappa}^{a}\left(
x\right)  \left(  x+y\right)  ^{\beta}dx\right\}  dy\\
& =\int\phi_{\eta\ell\left(  I\right)  }\left(  y\right)  \left\{  0\right\}
dy=0,
\end{align*}
by translation invariance of Lebesgue measure.

There is a linear map $S_{\eta}^{\mathcal{D}}=S_{\kappa,\eta}^{\mathcal{D}}$,
bounded and invertible on all $L^{p}\left(  \mathbb{R}^{2}\right)  $ spaces,
$1<p<\infty$,$\,$such that if we define%
\[
\bigtriangleup_{I;\kappa}^{\eta}f\equiv\left(  \bigtriangleup_{I;\kappa
}f\right)  \ast\phi_{\eta\ell\left(  I\right)  },
\]
then%
\[
\bigtriangleup_{I;\kappa}^{\eta}f\equiv\sum_{a\in\Gamma_{n}}\left\langle
\left(  S_{\eta}^{\mathcal{D}}\right)  ^{-1}f,h_{I;\kappa}^{a}\right\rangle
h_{I;\kappa}^{a,\eta}=\sum_{a\in\Gamma_{n}}\left\langle \left(  S_{\eta
}^{\mathcal{D}}\right)  ^{-1}f,h_{I;\kappa}^{a}\right\rangle S_{\eta
}^{\mathcal{D}}h_{I;\kappa}^{a}=\sum_{a\in\Gamma_{n}}\left(  S_{\eta
}^{\mathcal{D}}\bigtriangleup_{I;\kappa}\left(  S_{\eta}^{\mathcal{D}}\right)
^{-1}\right)  f=\sum_{a\in\Gamma_{n}}\bigtriangleup_{I;\kappa}^{\spadesuit
}f\ ,
\]
where $A^{\spadesuit}$ denotes the commutator $S_{\eta}^{\mathcal{D}}A\left(
S_{\eta}^{\mathcal{D}}\right)  ^{-1}$ of an operator $A$ with $S_{\eta
}^{\mathcal{D}}$.

\begin{theorem}
[\cite{Saw7}]\label{reproducing}Let $n\geq2$ and $\kappa\in\mathbb{N}$ with
$\kappa>\frac{n}{2}$. Then there is $\eta_{0}>0$ depending on $n$ and $\kappa
$\ such that for all $0<\eta<\eta_{0}$, and for all grids $\mathcal{D}$ in
$\mathbb{R}^{n}$, and all $1<p<\infty$, there is a bounded invertible operator
$S_{\eta}^{\mathcal{D}}=S_{\kappa,\eta}^{\mathcal{D}}$ on $L^{p}$, and a
positive constant $C_{p,n,\eta}$ such that the collection of functions
$\left\{  h_{I;\kappa}^{a,\eta}\right\}  _{I\in\mathcal{D},\ a\in\Gamma_{n}}$
is a $C_{p,n,\eta}$-frame for $L^{p}$, by which we mean,%
\begin{align}
f\left(  x\right)   & =\sum_{I\in\mathcal{D}}\bigtriangleup_{I;\kappa}^{\eta
}f\left(  x\right)  ,\ \ \ \ \ \text{for all }f\in L^{p},\label{bounded below}%
\\
\text{where }\bigtriangleup_{I;\kappa}^{\eta}f  & \equiv\sum_{a\in\Gamma_{n}%
}\left\langle \left(  S_{\eta}^{\mathcal{D}}\right)  ^{-1}f,h_{I;\kappa}%
^{a}\right\rangle \ h_{I;\kappa}^{a,\eta}\ ,\nonumber
\end{align}
and with convergence of the sum in both the $L^{p}$ norm and almost
everywhere, and%
\begin{equation}
\frac{1}{C_{p,n,\eta}}\left\Vert f\right\Vert _{L^{p}}\leq\left\Vert \left(
\sum_{I\in\mathcal{D}}\left\vert \bigtriangleup_{I;\kappa}^{\eta}f\right\vert
^{2}\right)  ^{\frac{1}{2}}\right\Vert _{L^{p}}\leq C_{p,n,\eta}\left\Vert
f\right\Vert _{L^{p}},\ \ \ \ \ \text{for all }f\in L^{p}.\label{square est}%
\end{equation}
Moreover, the smooth Alpert wavelets $\left\{  h_{I;\kappa}^{a,\eta}\right\}
_{I\in\mathcal{D},\ a\in\Gamma_{n}}$ are translation and dilation invariant in
the sense that $h_{I;\kappa}^{a,\eta}$ is a translate and dilate of the mother
Alpert wavelet $h_{I_{0};\kappa}^{a,\eta}$ where $I_{0}$ is the unit cube in
$\mathbb{R}^{n}$.
\end{theorem}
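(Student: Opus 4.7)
The plan is to realize the smooth Alpert system $\{h_{I;\kappa}^{a,\eta}\}_{I,a}$ as a small perturbation of the (unsmoothed) orthonormal Alpert basis $\{h_{I;\kappa}^{a}\}$ constructed in \cite{RaSaWi}, and to recover the reproducing formula and square function equivalence by inverting this perturbation on $L^{p}$. Concretely, I would \emph{define}
\[
S_{\eta}^{\mathcal{D}}f \equiv \sum_{I\in\mathcal{D}}(\bigtriangleup_{I;\kappa}f)\ast\phi_{\eta\ell(I)} = \sum_{I\in\mathcal{D}}\sum_{a\in\Gamma_{n}}\langle f,h_{I;\kappa}^{a}\rangle\, h_{I;\kappa}^{a,\eta},
\]
the two expressions agreeing because convolution commutes with the finite-rank projection $\bigtriangleup_{I;\kappa}$. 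Once $S_{\eta}^{\mathcal{D}}$ is known to be bounded and invertible on $L^{p}$, setting $g \equiv (S_{\eta}^{\mathcal{D}})^{-1}f$ in the definition of $\bigtriangleup_{I;\kappa}^{\eta}f$ yields $f = S_{\eta}^{\mathcal{D}}g = \sum_{I}\bigtriangleup_{I;\kappa}^{\eta}f$, which is (\ref{bounded below}).

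The heart of the proof is therefore to show that $E_{\eta}\equiv S_{\eta}^{\mathcal{D}}-I$ has operator norm $O(\eta^{\alpha})$ on $L^{p}$ for some $\alpha(\kappa,n)>0$, uniformly in the grid $\mathcal{D}$. Starting from the classical Alpert reproducing formula $f=\sum_{I}\bigtriangleup_{I;\kappa}f$ (valid in $L^{p}$ for $1<p<\infty$ by standard martingale theory for the orthonormal system $\{h_{I;\kappa}^{a}\}$), I would write
\[
E_{\eta}f = \sum_{I\in\mathcal{D}}(\bigtriangleup_{I;\kappa}f)\ast(\phi_{\eta\ell(I)}-\delta_{0}).
\]
The vanishing moments (\ref{van pos}) of $\phi$ up to order $\kappa-1$ ensure that $\phi_{\eta\ell(I)}-\delta_{0}$ annihilates polynomials of degree less than $\kappa$, and each $\bigtriangleup_{I;\kappa}f$ already has $\kappa$ vanishing moments on $I$; combining these two cancellations with standard size and smoothness estimates identifies $E_{\eta}$ as a vector-valued Calder\'on--Zygmund operator whose kernel carries an extra gain $\eta^{\alpha}$. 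Choosing $\eta_{0}$ so that $\|E_{\eta}\|_{L^{p}\to L^{p}}\leq 1/2$ produces the Neumann-series inverse $(S_{\eta}^{\mathcal{D}})^{-1}=\sum_{k\geq 0}(-E_{\eta})^{k}$.

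For the square function equivalence (\ref{square est}), I would apply the classical Alpert Littlewood--Paley bound $\|(\sum_{I}|\bigtriangleup_{I;\kappa}g|^{2})^{1/2}\|_{L^{p}}\approx\|g\|_{L^{p}}$ of \cite{RaSaWi} with $g\equiv(S_{\eta}^{\mathcal{D}})^{-1}f$. Since $\bigtriangleup_{I;\kappa}^{\eta}f=(\bigtriangleup_{I;\kappa}g)\ast\phi_{\eta\ell(I)}$ and convolution with $\phi_{\eta\ell(I)}$ is a small perturbation of the identity on each piece (by essentially the same Step~2 argument, now applied in the vector-valued setting via Khintchine randomization), the smoothed and unsmoothed square functions are comparable in $L^{p}$. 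Almost-everywhere convergence then follows from the usual maximal-function argument: the truncated partial sums are dominated by an $L^{p}$-bounded maximal operator (Doob for the unsmoothed martingale, composed with the bounded operator $(S_{\eta}^{\mathcal{D}})^{-1}$). The asserted translation/dilation invariance is automatic, since $h_{I;\kappa}^{a}$ is by construction a translate/dilate of its unit-cube counterpart and $\phi_{\eta\ell(I)}$ scales with $\ell(I)$.

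The main obstacle is the gain $\eta^{\alpha}$ in Step~2 when $p\neq 2$. On $L^{2}$ one obtains this immediately from Cotlar-type almost orthogonality of $\{\bigtriangleup_{I;\kappa}\}$ combined with the moment cancellation of $\phi_{\eta\ell(I)}-\delta_{0}$. For $p\neq 2$ the gain must be produced uniformly in $\mathcal{D}$ and across all scales $\ell(I)$, which forces careful bookkeeping of the Calder\'on--Zygmund kernel estimates; in particular, one leans on the hypothesis $\kappa>n/2$ to ensure enough vanishing moments for the convolved kernel to decay fast enough to sum over $I\in\mathcal{D}$ with the desired $\eta^{\alpha}$ front factor.
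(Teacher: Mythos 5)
Your proposal follows essentially the same route the paper itself indicates: the theorem is quoted from \cite{Saw7}, and the surrounding exposition in the paper defines $S_{\eta}^{\mathcal{D}}$ exactly as you do, namely $S_{\eta}^{\mathcal{D}}f = \sum_{I}(\bigtriangleup_{I;\kappa}f)\ast\phi_{\eta\ell(I)}$, so that $S_{\eta}^{\mathcal{D}}h_{I;\kappa}^{a}=h_{I;\kappa}^{a,\eta}$, and then treats $S_{\eta}^{\mathcal{D}}$ as a perturbation of the identity to be inverted by Neumann series. Your derivation of the reproducing formula from invertibility, and your Khintchine-plus-$L^{p}$-boundedness argument reducing the smooth square function estimate to the unsmoothed Alpert Littlewood--Paley inequality of \cite{RaSaWi}, are both correct. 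One remark on the point you flag as the main obstacle: rather than extracting an $\eta^{\alpha}$ factor directly from Calder\'on--Zygmund kernel estimates when $p\neq 2$, it is cleaner to establish (i) the $L^{2}$ smallness $\|E_{\eta}\|_{L^{2}\to L^{2}}\lesssim\eta^{1/2}$ by almost-orthogonality, using that $h_{I;\kappa}^{a,\eta}-h_{I;\kappa}^{a}$ is supported in an $\eta$-halo of the skeleton of $I$ and retains $\kappa$ vanishing moments, and (ii) the $\eta$-uniform $L^{p}$ boundedness of $E_{\eta}$ as a Calder\'on--Zygmund operator, and then interpolate; this yields $\|E_{\eta}\|_{L^{p}\to L^{p}}\lesssim\eta^{\theta/2}$ for some $\theta=\theta(p)\in(0,1)$, which is all that is needed.
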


\begin{notation}
\label{Notation Alpert} We will often drop the index $a$ that parameterizes
the finite set $\Gamma_{n}$ as it plays no essential role in most of what
follows, and it will be understood that when we write
\[
\bigtriangleup_{Q;\kappa}^{\eta}f=\left\langle \left(  S_{\eta}^{\mathcal{D}%
}\right)  ^{-1}f,h_{Q;\kappa}\right\rangle h_{Q;\kappa}^{\eta}=\breve
{f}\left(  Q\right)  h_{Q;\kappa}^{\eta},
\]
we \emph{actually} mean the Alpert \emph{pseudoprojection},%
\[
\bigtriangleup_{Q;\kappa}^{\eta}f=\sum_{a\in\Gamma_{n}}\left\langle \left(
S_{\eta}^{\mathcal{D}}\right)  ^{-1}f,h_{Q;\kappa}^{a}\right\rangle
h_{Q;\kappa}^{\eta,a}=\sum_{a\in\Gamma_{n}}\widehat{f_{a}}\left(  Q\right)
h_{Q;\kappa}^{a,\eta}\ ,
\]
where $\widehat{f_{a}}\left(  Q\right)  $ is a convenient abbreviation for the
inner product $\left\langle \left(  S_{\eta}^{\mathcal{D}}\right)
^{-1}f,h_{Q;\kappa}^{a}\right\rangle $ when $\kappa$ is understood. More
precisely, one can view $\widehat{f}\left(  Q\right)  =\left\{  \widehat
{f_{a}}\left(  Q\right)  \right\}  _{a\in\Gamma_{n}}$ and $h_{Q;\kappa}^{\eta
}=\left\{  h_{Q;\kappa}^{a,\eta}\right\}  _{a\in\Gamma_{n}}$ as sequences of
numbers and functions indexed by $\Gamma_{n}$, in which case $\widehat
{f}\left(  Q\right)  h_{Q;\kappa}^{\eta}$ is the dot product of these two
sequences. No confusion should arise between the Alpert coefficient
$\widehat{g}\left(  Q\right)  $, $Q\in\mathcal{G}\left[  U\right]  $ and the
Fourier transform $\widehat{g}\left(  \xi\right)  $, $\xi\in\mathbb{R}^{3}$,
as the argument in the first is a square in $\mathcal{G}\left[  U\right]  $,
while the argument in the second is a point in $\mathbb{R}^{3}$.
\end{notation}

\subsection{A dual expansion and the $SS^{\ast}$ operator}

For convenience we momentarily write $S=S_{\eta}^{\mathcal{D}}$, and recall
that%
\[
Sf\left(  x\right)  =\sum_{I\in\mathcal{D}}\left\langle f,h_{I;\kappa
}\right\rangle h_{I;\kappa}^{\eta}\left(  x\right)  ,
\]
with convergence in $L^{p}$, $1<p<\infty$, and pointwise almost everywhere. We
first claim that%
\[
S^{\ast}g\left(  y\right)  =\sum_{I\in\mathcal{D}}\left\langle g,h_{I;\kappa
}^{\eta}\right\rangle h_{I;\kappa}\left(  y\right)  ,
\]
with convergence in $L^{p}$, $1<p<\infty$, and pointwise almost everywhere.
Indeed, for all $f$ we have%
\begin{align*}
\left\langle f,S^{\ast}g\right\rangle  & =\left\langle Sf,g\right\rangle
=\int_{\mathbb{R}^{n}}Sf\left(  x\right)  g\left(  x\right)  dx=\sum
_{I\in\mathcal{D}}\int_{\mathbb{R}^{n}}\left\langle f,h_{I;\kappa
}\right\rangle h_{I;\kappa}^{\eta}\left(  x\right)  g\left(  x\right)
dx=\sum_{I\in\mathcal{D}}\left\langle f,h_{I;\kappa}\right\rangle \left\langle
g,h_{I;\kappa}^{\eta}\right\rangle \\
& =\sum_{I\in\mathcal{D}}\int_{\mathbb{R}^{n}}f\left(  y\right)  \left\langle
g,h_{I;\kappa}^{\eta}\right\rangle h_{I;\kappa}\left(  y\right)
dy=\left\langle f,\sum_{I\in\mathcal{D}}\left\langle g,h_{I;\kappa}^{\eta
}\right\rangle h_{I;\kappa}\right\rangle .
\end{align*}

As a consequence we have the dual reproducing formula,%
\[
g=S^{\ast}\left(  S^{\ast}\right)  ^{-1}g=\sum_{I\in\mathcal{D}}\left\langle
\left(  S^{\ast}\right)  ^{-1}g,h_{I;\kappa}^{\eta}\right\rangle h_{I;\kappa},
\]
with convergence in $L^{p}$, $1<p<\infty$, and pointwise almost everywhere,
and also the $SS^{\ast}$ formula,%
\[
SS^{\ast}g=S\left(  \sum_{I\in\mathcal{D}}\left\langle g,h_{I;\kappa}^{\eta
}\right\rangle h_{I;\kappa}\right)  =\sum_{I\in\mathcal{D}}\left\langle
g,h_{I;\kappa}^{\eta}\right\rangle Sh_{I;\kappa}=\sum_{I\in\mathcal{D}%
}\left\langle g,h_{I;\kappa}^{\eta}\right\rangle h_{I;\kappa}^{\eta},
\]
with convergence in $L^{p}$, $1<p<\infty$, and pointwise almost everywhere.
But $T\equiv SS^{\ast}$ is bounded and invertible, and so we have the
following $SS^{\ast}$ reproducing formula,
\[
g\left(  y\right)  =SS^{\ast}\left(  T^{-1}g\right)  \left(  y\right)
=\sum_{I\in\mathcal{D}}\left\langle T^{-1}g,h_{I;\kappa}^{\eta}\right\rangle
h_{I;\kappa}^{\eta}\left(  y\right)  ,
\]
with convergence in $L^{p}$, $1<p<\infty$, and pointwise almost everywhere.
The operator $T$ depends on the grid $\mathcal{D}$ and $\eta>0$, and we
sometimes write $T_{\eta}^{\mathcal{D}}=S_{\eta}^{\mathcal{D}}\left(  S_{\eta
}^{\mathcal{D}}\right)  ^{\ast}$ when we want to emphasize the dependence on
$\mathcal{D}$ and $\eta$. Here is the extension of Theorem \ref{reproducing}
to the bounded invertible self-adjoint linear operator $T_{\eta}^{\mathcal{D}%
}=S_{\eta}^{\mathcal{D}}\left(  S_{\eta}^{\mathcal{D}}\right)  ^{\ast}$.

\begin{notation}
In the theorem below, we have \emph{redefined} $\bigtriangleup_{I;\kappa
}^{\eta}f$ to be $\sum_{a\in\Gamma_{n}}\left\langle \left(  T_{\eta
}^{\mathcal{D}}\right)  ^{-1}f,h_{I;\kappa}^{a,\eta}\right\rangle
\ h_{I;\kappa}^{a,\eta}$, rather than $\sum_{a\in\Gamma_{n}}\left\langle
\left(  S_{\eta}^{\mathcal{D}}\right)  ^{-1}f,h_{I;\kappa}^{a}\right\rangle
\ h_{I;\kappa}^{a,\eta}$. Thus the Alpert wavelet inside the inner product is
now smooth at the expense of replacing the bounded invertible operator
$S_{\eta}^{\mathcal{D}}$ with the self-adjoint invertible operator $T_{\eta
}^{\mathcal{D}}=S_{\eta}^{\mathcal{D}}\left(  S_{\eta}^{\mathcal{D}}\right)
^{\ast}$.
\end{notation}

\begin{theorem}
\label{reproducing'}Let $n\geq2$ and $\kappa\in\mathbb{N}$ with $\kappa
>\frac{n}{2}$. Define $T_{\eta}^{\mathcal{D}}=S_{\eta}^{\mathcal{D}}\left(
S_{\eta}^{\mathcal{D}}\right)  ^{\ast}$ for $0<\eta<\eta_{0}$, where $S_{\eta
}^{\mathcal{D}}$ and $\eta_{0}$ are as in Theorem \ref{reproducing}. Then for
all $0<\eta<\eta_{0}$, and for all grids $\mathcal{D}$ in $\mathbb{R}^{n}$,
and all $1<p<\infty$, there is a positive constant $C_{p,n,\eta}$ such that
the collection of functions $\left\{  h_{I;\kappa}^{a,\eta}\right\}
_{I\in\mathcal{D},\ a\in\Gamma_{n}}$ is a $C_{p,n,\eta}$-frame for $L^{p}$, by
which we mean,%
\begin{align}
f\left(  x\right)   & =\sum_{I\in\mathcal{D}}\bigtriangleup_{I;\kappa}^{\eta
}f\left(  x\right)  ,\ \ \ \ \ \text{for all }f\in L^{p}%
,\label{bounded below'}\\
\text{where }\bigtriangleup_{I;\kappa}^{\eta}f  & \equiv\sum_{a\in\Gamma_{n}%
}\left\langle \left(  T_{\eta}^{\mathcal{D}}\right)  ^{-1}f,h_{I;\kappa
}^{a,\eta}\right\rangle \ h_{I;\kappa}^{a,\eta}\ ,\nonumber
\end{align}
and with convergence of the sum in both the $L^{p}$ norm and almost
everywhere, and%
\begin{equation}
\frac{1}{C_{p,n,\eta}}\left\Vert f\right\Vert _{L^{p}}\leq\left\Vert \left(
\sum_{I\in\mathcal{D}}\left\vert \bigtriangleup_{I;\kappa}^{\eta}f\right\vert
^{2}\right)  ^{\frac{1}{2}}\right\Vert _{L^{p}}\leq C_{p,n,\eta}\left\Vert
f\right\Vert _{L^{p}},\ \ \ \ \ \text{for all }f\in L^{p}.\label{square est'}%
\end{equation}
Moreover, the smooth Alpert wavelets $\left\{  h_{I;\kappa}^{a,\eta}\right\}
_{I\in\mathcal{D},\ a\in\Gamma_{n}}$ are translation and dilation invariant in
the sense that $h_{I;\kappa}^{a,\eta}$ is a translate and dilate of the mother
Alpert wavelet $h_{I_{0};\kappa}^{a,\eta}$ where $I_{0}$ is the unit cube in
$\mathbb{R}^{n}$.
\end{theorem}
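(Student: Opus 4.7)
The plan is to show that, despite the change in notation, the redefined pseudoprojection in Theorem \ref{reproducing'} coincides \emph{as an operator} with the pseudoprojection of Theorem \ref{reproducing}, so that every conclusion (reproducing formula, $L^p$ and a.e.\ convergence, two-sided square function bound, translation/dilation invariance) transfers verbatim from the earlier result and from the $SS^{\ast}$ reproducing formula already derived in the paragraphs preceding the statement.

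First I would record the key identity
\[
h_{I;\kappa}^{a,\eta}=S_{\eta}^{\mathcal{D}}h_{I;\kappa}^{a}\ ,
\]
which is immediate: applying $S_{\eta}^{\mathcal{D}}f=\sum_{J\in\mathcal{D}}\langle f,h_{J;\kappa}\rangle h_{J;\kappa}^{\eta}$ to $f=h_{I;\kappa}^{a}$ collapses the sum to a single term by $L^{2}$-orthonormality of the unsmoothed Alpert basis $\{h_{J;\kappa}^{b}\}$. With this in hand, for any $f\in L^{p}$ (so that $(T_{\eta}^{\mathcal{D}})^{-1}f\in L^{p}$ and may be paired against the smooth, compactly supported $h_{I;\kappa}^{a,\eta}\in L^{p^{\prime}}$) I compute
\[
\langle(T_{\eta}^{\mathcal{D}})^{-1}f,h_{I;\kappa}^{a,\eta}\rangle=\langle(T_{\eta}^{\mathcal{D}})^{-1}f,S_{\eta}^{\mathcal{D}}h_{I;\kappa}^{a}\rangle=\langle(S_{\eta}^{\mathcal{D}})^{\ast}(T_{\eta}^{\mathcal{D}})^{-1}f,h_{I;\kappa}^{a}\rangle=\langle(S_{\eta}^{\mathcal{D}})^{-1}f,h_{I;\kappa}^{a}\rangle,
\]
where the last step uses $(S_{\eta}^{\mathcal{D}})^{\ast}\bigl(S_{\eta}^{\mathcal{D}}(S_{\eta}^{\mathcal{D}})^{\ast}\bigr)^{-1}=(S_{\eta}^{\mathcal{D}})^{-1}$, valid because $S_{\eta}^{\mathcal{D}}$ and $(S_{\eta}^{\mathcal{D}})^{\ast}$ are bounded and invertible on $L^{p}$ and $L^{p^{\prime}}$ respectively, so $T_{\eta}^{\mathcal{D}}$ inherits invertibility on $L^{p}$.

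Consequently, the redefined $\bigtriangleup_{I;\kappa}^{\eta}f$ of \eqref{bounded below'} is termwise equal to the $\bigtriangleup_{I;\kappa}^{\eta}f$ of \eqref{bounded below}. The reproducing formula and the two-sided square function estimate \eqref{square est'} are therefore literally the conclusions of Theorem \ref{reproducing}; convergence in $L^{p}$ and almost everywhere is preserved for the same reason; and the translation/dilation invariance is the same statement about the same family of wavelets $\{h_{I;\kappa}^{a,\eta}\}$.

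There is no real obstacle: the content of the theorem is the observation that expressing the pseudoprojection through $(T_{\eta}^{\mathcal{D}})^{-1}$ against the \emph{smooth} wavelet $h_{I;\kappa}^{a,\eta}$, rather than through $(S_{\eta}^{\mathcal{D}})^{-1}$ against the \emph{unsmoothed} wavelet $h_{I;\kappa}^{a}$, gives an identical formula for the same operator. The one point deserving care is the well-posedness of the pairing $\langle(T_{\eta}^{\mathcal{D}})^{-1}f,h_{I;\kappa}^{a,\eta}\rangle$ for $f\in L^{p}$, but this is settled by the smoothness and compact support of $h_{I;\kappa}^{a,\eta}$ combined with the $L^{p}$-invertibility of $T_{\eta}^{\mathcal{D}}$ already noted in the preamble to the theorem.
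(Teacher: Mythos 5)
Your proof is correct and matches the paper's reasoning: the paper establishes the $SS^{\ast}$ reproducing formula $g=\sum_{I}\langle T^{-1}g,h_{I;\kappa}^{\eta}\rangle h_{I;\kappa}^{\eta}$ in the subsection preceding the theorem, which is algebraically the same observation you make via the coefficient identity $\langle T^{-1}f,h_{I;\kappa}^{a,\eta}\rangle=\langle S^{-1}f,h_{I;\kappa}^{a}\rangle$ (both rest on $S^{\ast}T^{-1}=S^{-1}$ and on $S h_{I;\kappa}^{a}=h_{I;\kappa}^{a,\eta}$, which the paper records explicitly). Your framing has the small advantage of making the transfer of the square function bound \eqref{square est'} from \eqref{square est} entirely explicit, since the two pseudoprojections are termwise the same operator.
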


We will use (\ref{bounded below'}) and (\ref{square est'}) going forward, and
we refer to (\ref{bounded below'}) as the doubly smooth Alpert expansion as
the smooth Alpert wavelets $h_{I;\kappa}^{a,\eta}$ appear both inside and
outside the inner product with $\left(  T_{\eta}^{\mathcal{D}}\right)  ^{-1}%
f$. Here is a brief explanation of why we need this particular decomposition
in our proof.

Fix $s\in\mathbb{N}$ and define $\mathsf{M}_{\mathbf{u}}^{s}f\left(  x\right)
\equiv e^{iu_{I}\cdot x}f\left(  x\right)  $. Consider the four reproducing
formulas for $\mathsf{M}_{\mathbf{u}}^{s}\bigtriangleup_{I;\kappa}^{\eta
}f=\mathsf{M}_{u_{I}}^{s}\bigtriangleup_{I;\kappa}^{\eta}f$,%
\begin{align*}
\mathsf{M}_{u_{I}}^{s}\bigtriangleup_{I;\kappa}^{\eta}f  & =\left\langle
f,h_{I;\kappa}\right\rangle \mathsf{M}_{u_{I}}^{s}h_{I;\kappa}^{\eta}\\
\text{\textbf{either} }  & =\left\langle f,h_{I;\kappa}\right\rangle
\sum_{J\in\mathcal{G}}\left\langle \mathsf{M}_{u_{I}}^{s}h_{I;\kappa}^{\eta
},h_{J;\kappa}\right\rangle h_{J;\kappa}\\
\text{\textbf{or} }  & =\left\langle f,h_{I;\kappa}\right\rangle \sum
_{J\in\mathcal{G}}\left\langle S^{-1}\mathsf{M}_{u_{I}}^{s}h_{I;\kappa}^{\eta
},h_{J;\kappa}\right\rangle h_{J;\kappa}^{\eta}\\
\text{\textbf{or} }  & =\left\langle f,h_{I;\kappa}\right\rangle \sum
_{J\in\mathcal{G}}\left\langle \left(  S^{\ast}\right)  ^{-1}\mathsf{M}%
_{u_{I}}^{s}h_{I;\kappa}^{\eta},h_{J;\kappa}^{\eta}\right\rangle h_{J;\kappa
}\\
\text{\textbf{or} }  & =\left\langle f,h_{I;\kappa}\right\rangle \sum
_{J\in\mathcal{G}}\left\langle T^{-1}\mathsf{M}_{u_{I}}^{s}h_{I;\kappa}^{\eta
},h_{J;\kappa}^{\eta}\right\rangle h_{J;\kappa}^{\eta}\ .
\end{align*}
Since the main theorem in \cite[Theorem 5]{Saw7} applies to square functions
associated with \emph{smooth} Alpert wavelet expansions, we must use a
reproducing formula with a smooth wavelet $h_{J;\kappa}^{\eta}$ on the far
right. This means we must consider either the expansion in line 3 or line 5.
But in order to integrate by parts, we must also have a smooth wavelet
$h_{J;\kappa}^{\eta}$ inside the inner product, and this leaves line 5 as the
only possibility. While this final expansion has only smooth wavelets after
the summation symbol $\sum$, it unfortunately also has the bounded invertible
self-adjoint operator $T^{-1}$ acting on the product of the modulation
$\mathsf{M}_{\mathbf{u}}^{s}$ and the smooth wavelet $h_{I;\kappa}^{\eta}$.
Nevertheless, we will be able to control the inner products using the error
estimates for $SS^{\ast}-\operatorname*{Id}$ developed in the next subsection.

\subsection{Localization estimates for $SS^{\ast}$}

Recall the following inner product estimates from \cite[Lemma 19]{Saw7}. For
$K\in\mathcal{G}$, define the collection of Carleson cubes of the skeleton
$\operatorname*{Skel}\left(  K\right)  \equiv\bigcup_{K\in\mathfrak{C}%
_{\mathcal{G}}\left(  J\right)  }\partial K$ of $K$ by%
\[
\operatorname*{Car}\left(  K\right)  \equiv\left\{  L\in\mathcal{G}%
:\ell\left(  L\right)  \leq\ell\left(  K\right)  \text{ and }L\cap
\operatorname*{Skel}\left(  K\right)  \neq\emptyset\right\}  .
\]
Define the $\eta$-halo of the skeleton $\operatorname*{Skel}\left(  K\right)
=\bigcup_{K\in\mathfrak{C}_{\mathcal{G}}\left(  J\right)  }\partial K$ of the
square $J$ by
\[
\mathcal{H}_{\eta}\left(  J\right)  \equiv\bigcup_{K\in\mathfrak{C}%
_{\mathcal{G}}\left(  J\right)  }\left\{  \left(  1+\eta\right)
K\setminus\left(  1-\eta\right)  K\right\}  ,
\]
where $\mathfrak{C}_{\mathcal{G}}\left(  J\right)  $ denotes the set of
$2^{n}$ dyadic children of $J$.

\begin{lemma}
[{\cite[Lemma 19]{Saw7}}]\label{inner est}Suppose $\kappa\in\mathbb{N}$ with
$\kappa>\frac{n}{2}$, $0<\eta=2^{-k}<1$, and $I,J\in\mathcal{G}$, where
$\mathcal{G}$ is a grid in $\mathbb{R}^{n}$. Then we have%
\begin{align*}
\left\vert \left\langle h_{J;\kappa},h_{J;\kappa}^{\eta}\right\rangle
\right\vert  & \approx1\text{ and }\left\vert \left\langle h_{J;\kappa}^{\eta
},h_{J^{\prime};\kappa}\right\rangle \right\vert \lesssim\eta
,\ \ \ \ \ \text{for }J\text{ and }J^{\prime}\text{ siblings},\\
\left\vert \left\langle h_{J;\kappa},h_{I;\kappa}^{\eta}\right\rangle
\right\vert  & \lesssim\eta\left(  \frac{\ell\left(  I\right)  }{\ell\left(
J\right)  }\right)  ^{\frac{n}{2}},\ \ \ \ \ \text{for }I\in
\operatorname*{Car}\left(  J\right)  ,\\
\left\vert \left\langle h_{J;\kappa},h_{I;\kappa}^{\eta}\right\rangle
\right\vert  & \lesssim\eta\left(  \frac{\ell\left(  J\right)  }{\ell\left(
I\right)  }\right)  ^{\frac{n}{2}-1},\ \ \ \ \ \text{for }J\in
\operatorname*{Car}\left(  I\right)  \text{ and }\ell\left(  J\right)
\geq\eta\ell\left(  I\right)  ,\\
\left\vert \left\langle h_{J;\kappa},h_{I;\kappa}^{\eta}\right\rangle
\right\vert  & \lesssim\frac{1}{\eta^{\kappa}}\left(  \frac{\ell\left(
J\right)  }{\ell\left(  I\right)  }\right)  ^{\kappa+\frac{n}{2}%
},\ \ \ \ \ \text{for }\ell\left(  J\right)  \leq\eta\ell\left(  I\right)
\text{ and }J\cap\mathcal{H}_{\eta}\left(  I\right)  \neq\emptyset,\\
\left\langle h_{J;\kappa},h_{I;\kappa}^{\eta}\right\rangle  &
=0,\ \ \ \ \ \text{in all other cases}.
\end{align*}

\end{lemma}

It is an easy exercise to extend the above estimates to the inner products
$\left\langle h_{J;\kappa}^{\eta},h_{I;\kappa}^{\eta}\right\rangle $ with
additional gain.

\begin{lemma}
[{\cite[Lemma 19]{Saw7}}]\label{inner est'}Suppose $\kappa\in\mathbb{N}$ with
$\kappa>\frac{n}{2}$, $0<\eta=2^{-k}<1$, and $I,J\in\mathcal{G}$, where
$\mathcal{G}$ is a grid in $\mathbb{R}^{n}$. Then we have%
\begin{align*}
\left\vert \left\langle h_{J;\kappa}^{\eta},h_{J;\kappa}^{\eta}\right\rangle
\right\vert  & \approx1\text{ and }\left\vert \left\langle h_{J;\kappa}^{\eta
},h_{J^{\prime};\kappa}^{\eta}\right\rangle \right\vert \lesssim
\eta,\ \ \ \ \ \text{for }J\text{ and }J^{\prime}\text{ siblings},\\
\left\vert \left\langle h_{J;\kappa}^{\eta},h_{I;\kappa}^{\eta}\right\rangle
\right\vert  & \lesssim\eta\left(  \frac{\ell\left(  I\right)  }{\ell\left(
J\right)  }\right)  ^{\kappa},\ \ \ \ \ \text{for }I\in\operatorname*{Car}%
\left(  J\right)  ,\\
\left\vert \left\langle h_{J;\kappa}^{\eta},h_{I;\kappa}^{\eta}\right\rangle
\right\vert  & \lesssim\eta\left(  \frac{\ell\left(  J\right)  }{\ell\left(
I\right)  }\right)  ^{\kappa},\ \ \ \ \ \text{for }J\in\operatorname*{Car}%
\left(  I\right)  \text{ and }\ell\left(  J\right)  \geq\eta\ell\left(
I\right)  ,\\
\left\vert \left\langle h_{J;\kappa}^{\eta},h_{I;\kappa}^{\eta}\right\rangle
\right\vert  & \lesssim\frac{1}{\eta^{\kappa}}\left(  \frac{\ell\left(
J\right)  }{\ell\left(  I\right)  }\right)  ^{\kappa+\frac{n}{2}%
},\ \ \ \ \ \text{for }\ell\left(  J\right)  \leq\eta\ell\left(  I\right)
\text{ and }\mathcal{H}_{\eta}\left(  J\right)  \cap\mathcal{H}_{\eta}\left(
I\right)  \neq\emptyset,\\
\left\langle h_{J;\kappa}^{\eta},h_{I;\kappa}^{\eta}\right\rangle  &
=0,\ \ \ \ \ \text{in all other cases}.
\end{align*}

\end{lemma}

Viewing $\mathcal{G}$ with its \emph{tree} structure, where an edge is placed
between each cube $I$ and its parent $\pi_{\mathcal{G}}I$, we define the `tree
distance' $\delta_{\operatorname*{tree}}\left(  J,I\right)  $ between two
dyadic cubes in $\mathcal{G}$ to be the length of the (unique) shortest path
joining $I$ and $J$ in the tree $\mathcal{G}$. Viewing $\mathcal{G}$ instead
with its \emph{graph} structure, where in addition to the edges in the tree
structure, edges are also placed between adjacent cubes in $\mathcal{G}$
having the same sidelength, we define the `graph distance' $\delta
_{\operatorname*{graph}}\left(  J,I\right)  $ between two dyadic cubes in
$\mathcal{G}$ to be the length of a shortest path joining $I$ and $J$ in the
tree $\mathcal{G}$ (shortest paths are not always unique in this graph structure).

\begin{notation}
From now on we will write $T_{\mathcal{G}}^{\eta}$ or $T_{\mathcal{G}}$ or
simply $T$ to denote the operator $T_{\eta}^{\mathcal{G}}=S_{\eta
}^{\mathcal{G}}\left(  S_{\eta}^{\mathcal{G}}\right)  ^{\ast}$ in Theorem
\ref{reproducing'} above when $\eta$ and $\mathcal{G}$ are understood from context.
\end{notation}

First we claim that the operator $T_{\mathcal{G}}^{\eta}$ is $\kappa
$\emph{-pseudolocal}, where we define an operator $L$ to be $\lambda
$\emph{-pseudolocal }if
\begin{align*}
Lh_{J;\kappa}^{\eta}  & =a_{\eta}h_{J;\kappa}^{\eta}+\sum_{J^{\prime}%
\in\mathcal{G}:\ \mathcal{H}_{\eta}\left(  J^{\prime}\right)  \cap
\mathcal{H}_{\eta}\left(  J\right)  \neq\emptyset}a_{J^{\prime}}%
^{J}h_{J^{\prime};\kappa}^{\eta}\ ,\\
\text{where }a_{\eta}  & =\left\langle h_{J_{\operatorname*{unit}};\kappa
}^{\eta},h_{J_{\operatorname*{unit}};\kappa}^{\eta}\right\rangle
=\int\left\vert h_{J_{\operatorname*{unit}};\kappa}^{\eta}\left(  x\right)
\right\vert ^{2}dx\approx1,\\
\text{and }\left\vert a_{J^{\prime}}^{J}\right\vert  & \leq\left\{
\begin{array}
[c]{ccc}%
C_{\kappa,N}2^{-\lambda\delta_{\operatorname*{graph}}\left(  J,J^{\prime
}\right)  } & \text{ if } & \mathcal{H}_{\eta}\left(  J^{\prime}\right)
\cap\mathcal{H}_{\eta}\left(  J\right)  \neq\emptyset\\
0 & \text{ if } & \mathcal{H}_{\eta}\left(  J^{\prime}\right)  \cap
\mathcal{H}_{\eta}\left(  J\right)  =\emptyset\\
0 & \text{ if } & J^{\prime}=J
\end{array}
\right.  ,
\end{align*}
where $J_{\operatorname*{unit}}=\left[  0,1\right)  ^{n}$ is the unit cube in
$\mathbb{R}^{n}$.

\begin{lemma}
Let $T_{\mathcal{G}}^{\eta}$ be the doubly smooth Alpert operator associated
with a frame $\left\{  h_{I;\kappa}^{\eta}\right\}  _{I\in\mathcal{G}}$ for
some $\kappa\geq\frac{n}{2}$. Then $T_{\mathcal{G}}^{\eta}$ is $\kappa$-pseudolocal.
\end{lemma}

\begin{proof}
To see this, we calculate%
\begin{equation}
Th_{J;\kappa}^{\eta}=\sum_{I\in\mathcal{G}}\left\langle T^{-1}\left(
Th_{J;\kappa}^{\eta}\right)  ,h_{I;\kappa}^{\eta}\right\rangle h_{I;\kappa
}^{\eta}=\sum_{I\in\mathcal{G}}\left\langle h_{J;\kappa}^{\eta},h_{I;\kappa
}^{\eta}\right\rangle h_{I;\kappa}^{\eta}=a_{\eta}h_{J;\kappa}^{\eta}%
+\sum_{I\in\mathcal{G}:\ I\neq J}\left\langle h_{J;\kappa}^{\eta},h_{I;\kappa
}^{\eta}\right\rangle h_{I;\kappa}^{\eta}\ ,\label{ident}%
\end{equation}
where%
\begin{equation}
\left\vert \left\langle h_{J;\kappa}^{\eta},h_{I;\kappa}^{\eta}\right\rangle
\right\vert \leq2^{C-\kappa\delta_{\operatorname*{graph}}\left(  I,J\right)
}\mathbf{1}_{\left\{  \mathcal{H}_{\eta}\left(  I\right)  \cap\mathcal{H}%
_{\eta}\left(  J\right)  \neq\emptyset\right\}  }\left(  I\right)
,\label{where'}%
\end{equation}
and we conclude that $T$ is $\kappa$-pseudolocal.
\end{proof}

\subsubsection{Well localized operators}

Now we generalize the definition of \emph{pseudolocal} operator to that of
\emph{well localized} operator.

\begin{definition}
Fix a frame of smooth Alpert wavelets $\left\{  h_{I;\kappa}^{\eta}\right\}
_{I\in\mathcal{G}}$. We say that an operator $L$ is $\lambda$\emph{-well
localized} if there are positive constants $C,\lambda$ such that%
\[
\left\vert \left\langle Lh_{J;\kappa}^{\eta},h_{I;\kappa}^{\eta}\right\rangle
\right\vert \leq2^{C-\lambda\delta_{\operatorname*{graph}}\left(  I,J\right)
},\ \ \ \ \ \text{for all }I,J\in\mathcal{G}.
\]

\end{definition}

Now we prove a basic result in localizing of the inverse operator $T^{-1}$.

\begin{proposition}
\label{T inv well}Let $T=T_{\mathcal{G}}^{\eta}$ be the doubly smooth Alpert
operator associated with a frame $\left\{  h_{I;\kappa}^{\eta}\right\}
_{I\in\mathcal{G}}$ for some $\kappa>n$. Then $T^{-1}$ is $\left(
\kappa-n\right)  $-well localized.
\end{proposition}

\begin{proof}
Recall that $\left\Vert \mathbb{I}-T\right\Vert _{L^{p}\rightarrow L^{p}}%
<\eta\ll1$, so that
\[
\left\Vert \mathbb{I}-a_{\eta}^{-1}T\right\Vert _{L^{p}\rightarrow L^{p}}%
\leq\left\Vert \mathbb{I}-T\right\Vert _{L^{p}\rightarrow L^{p}}+\left\vert
a_{\eta}^{-1}-1\right\vert \left\Vert T\right\Vert _{L^{p}\rightarrow L^{p}%
}<C\eta\ll1,
\]
and hence
\[
T^{-1}=\left[  a_{\eta}\mathbb{I}-\left(  a_{\eta}\mathbb{I}-T\right)
\right]  ^{-1}=a_{\eta}^{-1}\left[  \mathbb{I}-\left(  \mathbb{I}-a_{\eta
}^{-1}T\right)  \right]  ^{-1}=a_{\eta}^{-1}\sum_{n=0}^{\infty}\left(
\mathbb{I}-a_{\eta}^{-1}T\right)  ^{n}%
\]
is given by a Neumann series. Thus it suffices to show that
\begin{equation}
\left\vert \left\langle T^{-1}h_{J;\kappa}^{\eta},h_{I;\kappa}^{\eta
}\right\rangle \right\vert =\left\vert \left\langle \sum_{n=0}^{\infty}\left(
\mathbb{I}-a_{\eta}^{-1}T\right)  ^{n}h_{J;\kappa}^{\eta},h_{I;\kappa}^{\eta
}\right\rangle \right\vert \leq\sum_{n=0}^{\infty}\left\vert \left\langle
\left(  \mathbb{I}-a_{\eta}^{-1}T\right)  ^{n}h_{J;\kappa}^{\eta},h_{I;\kappa
}^{\eta}\right\rangle \right\vert \leq2^{C-\left(  \kappa-3\right)
\operatorname*{dtree}\left(  I,J\right)  },\label{suff}%
\end{equation}
for all $I,J\in\mathcal{G}$.

To this end we iterate the identity (\ref{ident}) to obtain first,%
\[
\left(  T-a_{\eta}^{-1}\mathbb{I}\right)  h_{J_{0};\kappa}^{\eta}=\sum
_{J_{1}\in\mathcal{G}:\ J_{1}\neq J_{0}}\left\langle h_{J_{0};\kappa}^{\eta
},h_{J_{1};\kappa}^{\eta}\right\rangle h_{J_{1};\kappa}^{\eta}\ ,
\]
and then%
\begin{align*}
& \left(  T-a_{\eta}^{-1}\mathbb{I}\right)  ^{2}h_{J_{0};\kappa}^{\eta}%
=\sum_{J_{1}\in\mathcal{G}:\ J_{1}\neq J_{0}}\left\langle h_{J_{0};\kappa
}^{\eta},h_{J_{1};\kappa}^{\eta}\right\rangle \left(  T-a_{\eta}%
^{-1}\mathbb{I}\right)  h_{J_{1};\kappa}^{\eta}\\
& =\sum_{J_{1}\in\mathcal{G}:\ J_{1}\neq J_{0}}\left\langle h_{J_{0};\kappa
}^{\eta},h_{J_{1};\kappa}^{\eta}\right\rangle \sum_{J_{2}\in\mathcal{G}%
:\ J_{2}\neq J_{1}}\left\langle h_{J_{1};\kappa}^{\eta},h_{J_{2};\kappa}%
^{\eta}\right\rangle h_{J_{2};\kappa}^{\eta}\\
& =\sum_{J_{2}\in\mathcal{G}}\left\{  \sum_{J_{1}\in\mathcal{G}:\ J_{0}\neq
J_{1},J_{1}\neq J_{2}}\left\langle h_{J_{0};\kappa}^{\eta},h_{J_{1};\kappa
}^{\eta}\right\rangle \left\langle h_{J_{1};\kappa}^{\eta},h_{J_{2};\kappa
}^{\eta}\right\rangle \right\}  h_{J_{2};\kappa}^{\eta}\ ,
\end{align*}
and finally in general,%
\begin{align*}
\left(  T-a_{\eta}^{-1}\mathbb{I}\right)  ^{n}h_{J_{0};\kappa}^{\eta}  &
=\sum_{J_{n}\in\mathcal{G}}a_{n}\left(  J_{0},J_{n}\right)  h_{J_{n};\kappa
}^{\eta}\ ,\\
\text{where }a_{n}\left(  J_{0},J_{n}\right)    & \equiv\sum_{\substack{J_{1}%
J_{2},...,J_{n-1}\in\mathcal{G}\\J_{k-1}\neq J_{k}for1\leq k\leq
n}}\left\langle h_{J_{0};\kappa}^{\eta},h_{J_{1};\kappa}^{\eta}\right\rangle
\left\langle h_{J_{1};\kappa}^{\eta},h_{J_{2};\kappa}^{\eta}\right\rangle
...\left\langle h_{J_{n-1};\kappa}^{\eta},h_{J_{n};\kappa}^{\eta}\right\rangle
.
\end{align*}
From (\ref{where'}) we now have the estimate,%
\begin{align}
\left\vert a_{n}\left(  J_{0},J_{n}\right)  \right\vert  & \leq\sum
_{\substack{J_{1}J_{2},...,J_{n-1}\in\mathcal{G}\\J_{k-1}\neq J_{k}for1\leq
k\leq n}}\left\vert \left\langle h_{J_{0};\kappa}^{\eta},h_{J_{1};\kappa
}^{\eta}\right\rangle \left\langle h_{J_{1};\kappa}^{\eta},h_{J_{2};\kappa
}^{\eta}\right\rangle ...\left\langle h_{J_{n-1};\kappa}^{\eta},h_{J_{n}%
;\kappa}^{\eta}\right\rangle \right\vert \label{from where'}\\
& \leq\sum_{\substack{J_{1}J_{2},...,J_{n-1}\in\mathcal{G}\\J_{k-1}\neq
J_{k}for1\leq k\leq n}}\prod_{k=1}^{n}2^{C-\kappa\delta_{\operatorname*{graph}%
}\left(  J_{k-1},J_{k}\right)  }\mathbf{1}_{\left\{  \mathcal{H}_{\eta}\left(
J_{k-1}\right)  \cap\mathcal{H}_{\eta}\left(  J_{n}\right)  \neq
\emptyset\right\}  }\left(  J_{k-1}\right)  .\nonumber
\end{align}

We next claim that there is $C>0$ such that
\begin{equation}
\sum_{n=0}^{\infty}\left\vert a_{n}\left(  J_{0},K_{0}\right)  \right\vert
\leq2^{C-\left(  \kappa-3\right)  \delta_{\operatorname*{graph}}\left(
J_{0},K_{0}\right)  },\ \ \ \ \ \text{for all }J_{0},K_{0}\in\mathcal{G}%
,\label{careful}%
\end{equation}
which proves (\ref{suff}), and shows that the operator $T^{-1}=a_{\eta}%
^{-1}\sum_{n=0}^{\infty}\left(  \mathbb{I}-a_{\eta}^{-1}T\right)  ^{n}$ is
$\left(  \kappa-3\right)  $-well localized.

The fact that $\mathcal{G}$ is a tree will play a critical role in proving
(\ref{careful}). Indeed, given $J_{k-1}$ and $J_{k}$, let $L_{k}%
\equiv\operatorname*{dtree}\left(  \left(  J_{k-1},J_{k}\right)  \right)  $
and set%
\[
\Omega\left(  J_{k-1},L_{k}\right)  \equiv\left\{  J\in\mathcal{G}%
:\delta_{\operatorname*{graph}}\left(  J_{k-1},J\right)  =L_{k}\text{ and
}\mathcal{H}_{\eta}\left(  J_{k-1}\right)  \cap\mathcal{H}_{\eta}\left(
J\right)  \neq\emptyset\right\}  .
\]
Assume first that $\ell\left(  J\right)  >\ell\left(  J_{k-1}\right)  $, say
$\ell\left(  J\right)  =2^{m}\ell\left(  J_{k-1}\right)  $. Then for each $m$
there are at most $3^{n}$ choices of $J$ for which $J\in\Omega\left(
J_{k-1},L_{k}\right)  $, i.e. the number of cubes adjacent to a cube in the
tower. Then $m\leq L_{k}$ and,%
\[
\#\left\{  J\in\Omega\left(  J_{k-1},L_{k}\right)  :\ell\left(  J\right)
>\ell\left(  J_{k-1}\right)  \right\}  \lesssim3^{n}L_{k}.
\]
If instead $\ell\left(  J\right)  \leq\ell\left(  J_{k-1}\right)  $, say
$\ell\left(  J\right)  =2^{-m}\ell\left(  J_{k-1}\right)  $, then $m\leq
L_{k}$ and
\[
\#\left\{  J\in\Omega\left(  J_{k-1},L_{k}\right)  :\ell\left(  J\right)
=2^{-m}\ell\left(  J_{k-1}\right)  \right\}  \lesssim\max\left\{  2^{n\left(
m-c\right)  },1\right\}  ,
\]
and so,%
\[
\#\left\{  J\in\Omega\left(  J_{k-1},L_{k}\right)  :\ell\left(  J\right)
\leq\ell\left(  J_{k-1}\right)  \right\}  \lesssim\sum_{m=0}^{L_{k}}%
\max\left\{  2^{n\left(  m-c\right)  },1\right\}  \lesssim\max\left\{
2^{2\left(  L_{k}-c\right)  },1\right\}  .
\]
Altogether then we have%
\[
\#\Omega\left(  J_{k-1},L_{k}\right)  \lesssim\max\left\{  2^{n\left(
L_{k}-c\right)  },3^{n}L_{k}\right\}  .
\]

On the other hand, we have the estimate%
\[
\left\vert \left\langle h_{J_{k-1};\kappa}^{\eta},h_{J;\kappa}^{\eta
}\right\rangle \right\vert \leq2^{C-\kappa L_{k}},\ \ \ \ \ \text{for }%
J\in\Omega\left(  J_{k-1},L_{k}\right)  ,
\]
and so we conclude that
\begin{align*}
\#\Omega\left(  J_{k-1},L_{k}\right)  2^{C-\kappa L_{k}}  & \leq\max\left\{
2^{n\left(  L_{k}-c\right)  },3^{n}L_{k}\right\}  2^{C-\kappa L_{k}}\\
& =\max\left\{  2^{C-nc-\left(  \kappa-n\right)  L_{k}},3^{n}2^{C-\kappa
L_{k}}L_{k}\right\}  \\
& \leq2^{C-\left(  \kappa-n\right)  L_{k}}L_{k}\leq2^{C-\left(  \kappa
-n\right)  L_{k}},
\end{align*}
with a different constant $C$.

It now follows that
\[
\left\vert a_{n}\left(  J_{0},J_{n}\right)  \right\vert \leq\prod_{k=1}%
^{n}\#\Omega\left(  J_{k-1},L_{k}\right)  2^{C-\kappa L_{k}}\leq\prod
_{k=1}^{n}2^{C-\left(  \kappa-n\right)  L_{k}}=2^{C-\left(  \kappa-n\right)
\sum_{k=1}^{n}\delta_{\operatorname*{graph}}\left(  J_{k-1},J_{k}\right)  },
\]
and hence that%
\[
\sum_{n=0}^{\infty}\left\vert a_{n}\left(  J_{0},K_{0}\right)  \right\vert
\leq\sum_{n=0}^{\infty}\sum_{\substack{J_{1},J_{2},...,J_{n-1}\in
\mathcal{G}\text{ and }J_{n}=K_{0}\\J_{k-1}\neq J_{k}\text{ for }1\leq k\leq
n}}2^{C-\left(  \kappa-n\right)  \sum_{k=1}^{n}\delta_{\operatorname*{graph}%
}\left(  J_{k-1},J_{k}\right)  }\leq2^{C-\left(  \kappa-n\right)
\operatorname*{dtree}\left(  J_{0},K_{0}\right)  }.
\]

\end{proof}

\begin{lemma}
\label{comp}If $L_{1}$ and $L_{2}$ are $\lambda$-well localized operators with
constants $C_{1}$ and $C_{2}$ respectively, then $L_{1}\circ L_{2}$ is
$\lambda$-well localized with constant $C_{1}+C_{2}$.
\end{lemma}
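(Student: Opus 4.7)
The plan is to combine the doubly smooth Alpert reproducing formula (\ref{bounded below'}) with the explicit tree-decay expansion of $(T_\eta^{\mathcal{D}})^{-1}$ extracted from the proof of Lemma \ref{T inv well}, and then close the argument by a two-fold Schur summation on the tree $\mathcal{G}$. The additivity $C_1 + C_2$ will come out because each application of well-localization contributes one factor $2^{C_k}$, while the cost of the two Schur summations is only a universal constant that gets absorbed into $2^{C_1 + C_2}$.

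First I would expand $\mathcal{T}_2 h_{J;\kappa}^\eta$ by (\ref{bounded below'}),
\[
\mathcal{T}_2 h_{J;\kappa}^\eta = \sum_{K \in \mathcal{G}} \alpha_K^J\, h_{K;\kappa}^\eta, \qquad \alpha_K^J \equiv \left\langle (T_\eta^{\mathcal{D}})^{-1} \mathcal{T}_2 h_{J;\kappa}^\eta,\, h_{K;\kappa}^\eta \right\rangle,
\]
to obtain the key identity
\[
\left\langle \mathcal{T}_1 \mathcal{T}_2 h_{J;\kappa}^\eta,\, h_{I;\kappa}^\eta \right\rangle = \sum_{K \in \mathcal{G}} \alpha_K^J\, \left\langle \mathcal{T}_1 h_{K;\kappa}^\eta,\, h_{I;\kappa}^\eta \right\rangle.
\]
The $\lambda$-well localization of $\mathcal{T}_1$ already bounds the inner pairing by $2^{C_1 - \lambda \operatorname{dtree}(I,K)}$, so the entire task reduces to producing a matching decay estimate $|\alpha_K^J| \lesssim 2^{C_2 + C - \lambda \operatorname{dtree}(K,J)}$ for the coefficients $\alpha_K^J$.

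To bound $\alpha_K^J$, I would use that $T_\eta^{\mathcal{D}}$ is self-adjoint to rewrite $\alpha_K^J = \langle \mathcal{T}_2 h_{J;\kappa}^\eta,\, (T_\eta^{\mathcal{D}})^{-1} h_{K;\kappa}^\eta \rangle$, and then insert the pointwise Neumann-series expansion
\[
(T_\eta^{\mathcal{D}})^{-1} h_{K;\kappa}^\eta = \sum_{L \in \mathcal{G}} \beta_L^K\, h_{L;\kappa}^\eta, \qquad |\beta_L^K| \leq 2^{C_0 - (\kappa-3) \operatorname{dtree}(L,K)},
\]
in which $\beta_L^K = \sum_{n \geq 0} (-1)^n a_n(K,L)$ is precisely the coefficient whose absolute sum was already controlled inside the proof of Lemma \ref{T inv well}. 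Pairing this with the $\lambda$-well localization $|\langle \mathcal{T}_2 h_{J;\kappa}^\eta, h_{L;\kappa}^\eta \rangle| \leq 2^{C_2 - \lambda \operatorname{dtree}(L,J)}$ of $\mathcal{T}_2$ and summing first in $L$ by the tree-distance triangle inequality, using the level-counting estimate $\#\Omega(\cdot, L_k) \lesssim \max\{2^{2L_k}, L_k\}$ from the proof of Lemma \ref{T inv well} to absorb the $4^{L_k}$ branching of $\mathcal{G}$ into a small portion of the $\kappa - 3$ margin, yields
\[
|\alpha_K^J| \leq 2^{C_0 + C_2 + C_3 - \lambda \operatorname{dtree}(K,J)}
\]
for a universal $C_3$. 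Substituting back into the key identity and running the same Schur argument a second time in $K$ against $2^{C_1 - \lambda \operatorname{dtree}(I,K)}$ gives
\[
\left|\left\langle \mathcal{T}_1 \mathcal{T}_2 h_{J;\kappa}^\eta,\, h_{I;\kappa}^\eta \right\rangle\right| \leq 2^{C_1 + C_2 + (C_0 + C_3 + C_4) - \lambda \operatorname{dtree}(I,J)},
\]
and absorbing the universal piece $C_0 + C_3 + C_4$ into the additive pair $C_1 + C_2$ produces the claimed $\lambda$-well localization with constant $C_1 + C_2$.

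The hard part will be the Schur summation, namely verifying
\[
\sum_{L \in \mathcal{G}} 2^{-(\kappa-3)\operatorname{dtree}(L,K) - \lambda \operatorname{dtree}(L,J)} \lesssim 2^{-\lambda \operatorname{dtree}(K,J)}.
\]
This requires the same delicate bookkeeping used to deduce (\ref{careful}), in particular splitting the level set $\{L : \operatorname{dtree}(L,K) = L_k\}$ according to whether $\ell(L) > \ell(K)$ or $\ell(L) \leq \ell(K)$ and trading a bounded number of levels of decay against the polynomial/geometric growth of the tree at each scale. Once this is in hand, the two-step application (first in $L$, then in $K$) and the collapsing of all universal constants into $2^{C_1 + C_2}$ are purely routine.
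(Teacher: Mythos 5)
The paper itself does not actually present a proof of Lemma \ref{comp}; it is dismissed with a one-line remark that the argument is ``a straightforward exercise in manipulating the definition,'' so there is nothing concrete to compare your route against. Your general strategy --- expand $\mathcal{T}_{2}h_{J;\kappa}^{\eta}$ in the frame, bound the frame coefficients $\alpha_{K}^{J}$ by passing $T^{-1}$ onto $h_{K;\kappa}^{\eta}$, and Schur-sum twice --- is the natural one, and your first Schur sum is sound: there the fast $(\kappa-3)$-decay of the Neumann coefficients $\beta_{L}^{K}$ overwhelms both the $\lambda$-decay of $\mathcal{T}_{2}$ and the tree growth $\lesssim 4^{r}$, giving $|\alpha_{K}^{J}|\lesssim 2^{C_{2}-\lambda\operatorname*{dtree}(K,J)}$.

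The genuine gap is in the second Schur sum, which you assert is ``purely routine'' and ``the same argument a second time.'' It is not the same argument: here both factors decay at the \emph{identical} rate $\lambda$, so you must control $\sum_{K}2^{-\lambda\operatorname*{dtree}(I,K)-\lambda\operatorname*{dtree}(K,J)}$. Writing $\operatorname*{dtree}(I,K)+\operatorname*{dtree}(K,J)=\operatorname*{dtree}(I,J)+2\operatorname*{dtree}(K,P(K))$, where $P(K)$ is the point of the $I$--$J$ geodesic nearest $K$, the terms with $P(K)=K$ alone already contribute $(\operatorname*{dtree}(I,J)+1)\cdot 2^{-\lambda\operatorname*{dtree}(I,J)}$, and summing the off-geodesic detours over the $\gtrsim 4^{r}$ squares at each detour depth $r$ from each of the $\operatorname*{dtree}(I,J)+1$ geodesic points does not improve this. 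So the best one gets is $\left|\left\langle \mathcal{T}_{1}\mathcal{T}_{2}h_{J;\kappa}^{\eta},h_{I;\kappa}^{\eta}\right\rangle\right|\lesssim\left(1+\operatorname*{dtree}(I,J)\right)2^{C_{1}+C_{2}-\lambda\operatorname*{dtree}(I,J)}$, which is $\lambda'$-well localized for every $\lambda'<\lambda$, but not $\lambda$-well localized with an absolute constant. Unlike the first Schur sum, there is no spare margin in the exponent to trade against the count of geodesic squares. Relatedly, your final ``absorbing the universal piece $C_{0}+C_{3}+C_{4}$ into the additive pair $C_{1}+C_{2}$'' is not legitimate if the conclusion is to be the literal constant $C_{1}+C_{2}$; at best you would get $C_{1}+C_{2}+C'$ for some absolute $C'$. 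Both of these imprecisions may well reflect looseness in the statement of the lemma rather than a defect peculiar to your argument, but the claim that the second Schur summation is routine needs to be retracted or replaced with an explicit account of the $\varepsilon$-loss in $\lambda$ (or a logarithmic loss in the constant).
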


\begin{proof}
The proof is a straightforward exercise in manipulating the definition of a
$\lambda$-well localized operator with constant $C$.
\end{proof}

\subsubsection{An extension to strong localization}

We continue to work in $n$ dimensions. We will begin by showing that the
operator $T_{\mathcal{D}}^{\eta}$ is $\frac{n}{2}$\emph{-semilocal}, where we
define an operator $L$ to be $\lambda$\emph{-semilocal }if
\begin{align*}
Lh_{J;\kappa}  & =b_{\eta}h_{J;\kappa}^{\eta}+\sum_{J^{\prime}\in
\mathcal{G}:\ \mathcal{H}_{\eta}\left(  J^{\prime}\right)  \cap\mathcal{H}%
_{\eta}\left(  J\right)  \neq\emptyset}a_{J^{\prime}}^{J}h_{J^{\prime};\kappa
}^{\eta}\ ,\\
\text{where }b_{\eta}  & \equiv\left\langle h_{J_{\operatorname*{unit}}%
;\kappa},h_{J_{\operatorname*{unit}};\kappa}^{\eta}\right\rangle \text{ and
}\left\vert b_{\eta}\right\vert \approx1,\\
\text{and }\left\vert a_{J^{\prime}}^{J}\right\vert  & \leq\left\{
\begin{array}
[c]{ccc}%
C_{\kappa,N}2^{-\lambda\delta_{\operatorname*{graph}}\left(  J,J^{\prime
}\right)  } & \text{ if } & \mathcal{H}_{\eta}\left(  I\right)  \cap
\operatorname*{Skel}\left(  J\right)  \neq\emptyset\\
0 & \text{ if } & \mathcal{H}_{\eta}\left(  J^{\prime}\right)  \cap
\operatorname*{Skel}\left(  J\right)  =\emptyset\\
0 & \text{ if } & J^{\prime}=J
\end{array}
\right.  ,
\end{align*}

Now we can give our first semilocal result in $n$ dimensions.

\begin{lemma}
Let $T$ be the doubly smooth Alpert operator associated with a frame $\left\{
h_{I;\kappa}^{\eta}\right\}  _{I\in\mathcal{G}}$ for some $\kappa>\frac{n}{2}%
$. Then $T$ is $\frac{n}{2}$-semilocal.
\end{lemma}

\begin{proof}
To see this, we calculate%
\begin{equation}
Th_{J;\kappa}=\sum_{I\in\mathcal{G}}\left\langle h_{J;\kappa},h_{I;\kappa
}^{\eta}\right\rangle h_{I;\kappa}^{\eta}=\sum_{I\in\mathcal{G}}\left\langle
h_{J;\kappa},h_{I;\kappa}^{\eta}\right\rangle h_{I;\kappa}^{\eta}=b_{\eta
}h_{J;\kappa}^{\eta}+\sum_{I\in\mathcal{G}:\ I\neq J}\left\langle h_{J;\kappa
},h_{I;\kappa}^{\eta}\right\rangle h_{I;\kappa}^{\eta}\ ,\label{ident semi}%
\end{equation}
where by Lemma \ref{inner est},%
\begin{align}
\left\vert \left\langle h_{J;\kappa},h_{I;\kappa}^{\eta}\right\rangle
\right\vert  & \leq2^{C-\kappa\delta_{\operatorname*{graph}}\left(
I,J\right)  }\text{ if }\ell\left(  J\right)  \leq\ell\left(  I\right)
\,\text{and }\mathcal{H}_{\eta}\left(  I\right)  \cap\operatorname*{Skel}%
\left(  J\right)  \neq\emptyset\label{where' semi}\\
\left\vert \left\langle h_{J;\kappa},h_{I;\kappa}^{\eta}\right\rangle
\right\vert  & \leq2^{C-\frac{n}{2}\delta_{\operatorname*{graph}}\left(
I,J\right)  }\text{ if }\ell\left(  J\right)  >\ell\left(  I\right)
\,\text{and }\mathcal{H}_{\eta}\left(  I\right)  \cap\operatorname*{Skel}%
\left(  J\right)  \neq\emptyset,\nonumber
\end{align}
and we conclude that $T$ is $\frac{n}{2}$-pseudolocal.
\end{proof}

Now we generalize the definition of \emph{semilocal} operator to that of
\emph{well semilocalized} operator. For this we need the concept of the
hangman set $\operatorname*{Hang}\left(  I\right)  $ of a cube $I\in
\mathcal{G}$. Define
\begin{align*}
\operatorname*{Tower}\left(  I\right)    & \equiv\left\{  K\in\mathcal{G}%
:\delta_{\operatorname*{graph}}\left(  K,\left[  I,\infty\right)  \right)
\leq1\right\}  ,\\
\text{where }\left[  I,\infty\right)    & \equiv\left\{  \pi^{\left(
k\right)  }I:k\geq0\right\}
\end{align*}
to be the dyadic tower above $I$ together with its adjacent cubes, and
\begin{align*}
\operatorname*{Hang}\left(  I\right)    & \equiv\operatorname*{Tower}\left(
I\right)  \cup\operatorname*{Adj}\operatorname*{Skel}\left(  I\right)  ,\\
\text{where }\operatorname*{Adj}\operatorname*{Skel}\left(  I\right)    &
\equiv\left\{  K\in\mathcal{G}:\ell\left(  K\right)  \leq\ell\left(  I\right)
\text{ and }K\cap\operatorname*{Skel}\left(  I\right)  \neq\emptyset\right\}
\end{align*}
to be the hangman set of $I$, which resembles the stick figure in the child's
game of hangman.

\begin{definition}
Define $J_{\operatorname*{Hang}\left(  I\right)  }$ to be the (unique) cube in
$\operatorname*{Hang}\left(  I\right)  $ that is closest to $J$ in the graph
metric, i.e.
\[
\delta_{\operatorname*{graph}}\left(  J,\operatorname*{Hang}\left(  I\right)
\right)  =\min\left\{  \delta_{\operatorname*{graph}}\left(  J,K\right)
:K\in\operatorname*{Hang}\left(  I\right)  \right\}  ,
\]
sometimes referred to as the stopping time for the geodesic $\left[
J,K\right]  $ in the set $\operatorname*{Hang}\left(  I\right)  $.
\end{definition}

\begin{definition}
Fix a frame of smooth Alpert wavelets $\left\{  h_{I;\kappa}^{\eta}\right\}
_{I\in\mathcal{G}}$ in $\mathbb{R}^{n}$. We say that an operator $L$ is
$\left(  \lambda_{1},\lambda_{2}\right)  $\emph{-well semilocalized} if there
are positive constants $C,\lambda_{1},\lambda_{2}$ such that,
\[
\left\vert \left\langle Lh_{J;\kappa},h_{I;\kappa}^{\eta}\right\rangle
\right\vert \leq2^{C-\left[  \lambda_{1}\delta_{\operatorname*{graph}}\left(
J,I_{\operatorname*{Hang}\left(  J\right)  }\right)  +\lambda_{2}%
\delta_{\operatorname*{graph}}\left(  I_{\operatorname*{Hang}\left(  J\right)
},I\right)  \right]  },\ \ \ \ \ \text{for all }I,J\in\mathcal{G}.
\]

\end{definition}

There are two main differences between a \emph{well semilocalized} operator
and a \emph{well localized} operator. One is that the smooth Alpert wavelet
$h_{J;\kappa}^{\eta}$ in \emph{well localized} has here been replaced by the
nonsmooth $h_{J;\kappa}$ in \emph{well semilocalized}, and the second is that
the graph distance in \emph{well semilocalized} is weighted differently from
$I$ to $J_{\operatorname*{Hang}\left(  I\right)  }$ than it is from
$J_{\operatorname*{Hang}\left(  I\right)  }$ to $J$.

\begin{proposition}
\label{T inv well semi}Let $T$ be the doubly smooth Alpert operator associated
with a frame $\left\{  h_{I;\kappa}^{\eta}\right\}  _{I\in\mathcal{G}}$ for
some $\kappa>n$. Then $T^{-1}$ is $\left(  \frac{n}{2},\kappa-n\right)  $-well semilocalized.
\end{proposition}

\begin{proof}
We have%
\[
T^{-1}h_{J;\kappa}=T^{-2}Th_{J;\kappa}=T^{-2}\sum_{I\in\mathcal{G}%
}\left\langle h_{J;\kappa},h_{I;\kappa}^{\eta}\right\rangle h_{I;\kappa}%
^{\eta}=\sum_{I\in\mathcal{G}}\left\langle h_{J;\kappa},h_{I;\kappa}^{\eta
}\right\rangle T^{-2}h_{I;\kappa}^{\eta}%
\]
where $T^{-2}=T^{-1}\circ T^{-1}$ is $\left(  \kappa-3\right)  $-well
localized by Proposition \ref{T inv well} and Lemma \ref{comp}. Then we have%
\[
\left\langle T^{-1}h_{J;\kappa},h_{K;\kappa}^{\eta}\right\rangle =\sum
_{I\in\mathcal{G}}\left\langle h_{J;\kappa},h_{I;\kappa}^{\eta}\right\rangle
\left\langle T^{-2}h_{I;\kappa}^{\eta},h_{K;\kappa}^{\eta}\right\rangle ,
\]
and so%
\begin{align*}
& \left\vert \left\langle T^{-1}h_{J;\kappa},h_{K;\kappa}^{\eta}\right\rangle
\right\vert \leq\sum_{I\in\mathcal{G}}\left\vert \left\langle h_{J;\kappa
},h_{I;\kappa}^{\eta}\right\rangle \right\vert \left\vert \left\langle
T^{-2}h_{I;\kappa}^{\eta},h_{K;\kappa}^{\eta}\right\rangle \right\vert \\
& \leq\sum_{I\in\mathcal{G}}\left\{
\begin{array}
[c]{ccc}%
2^{C-\kappa\delta_{\operatorname*{graph}}\left(  I,J\right)  }2^{-\left(
\kappa-n\right)  \delta_{\operatorname*{graph}}\left(  I,K\right)  } & \text{
if } & \ell\left(  J\right)  \leq\ell\left(  I\right)  \,\text{and
}\mathcal{H}_{\eta}\left(  I\right)  \cap\operatorname*{Skel}\left(  J\right)
\neq\emptyset\\
2^{C-\frac{n}{2}\delta_{\operatorname*{graph}}\left(  I,J\right)  }2^{-\left(
\kappa-n\right)  \delta_{\operatorname*{graph}}\left(  I,K\right)  } & \text{
if } & \ell\left(  J\right)  >\ell\left(  I\right)  \,\text{and }%
\mathcal{H}_{\eta}\left(  I\right)  \cap\operatorname*{Skel}\left(  J\right)
\neq\emptyset
\end{array}
\right.  \\
& \leq\sum_{I\in\mathcal{G}:\ \ell\left(  J\right)  \leq\ell\left(  I\right)
\,\text{and }\mathcal{H}_{\eta}\left(  I\right)  \cap\operatorname*{Skel}%
\left(  J\right)  \neq\emptyset}2^{C-\kappa\delta_{\operatorname*{graph}%
}\left(  I,J\right)  }2^{-\left(  \kappa-n\right)  \delta
_{\operatorname*{graph}}\left(  I,K\right)  }\\
& +\sum_{I\in\mathcal{G}:\ \ell\left(  J\right)  >\ell\left(  I\right)
\,\text{and }\mathcal{H}_{\eta}\left(  I\right)  \cap\operatorname*{Skel}%
\left(  J\right)  \neq\emptyset}2^{C-\frac{n}{2}\delta_{\operatorname*{graph}%
}\left(  I,J\right)  }2^{-\left(  \kappa-n\right)  \delta
_{\operatorname*{graph}}\left(  I,K\right)  }\\
& \lesssim2^{C-\left[  \frac{n}{2}\delta_{\operatorname*{graph}}\left(
J,I_{\operatorname*{Hang}\left(  J\right)  }\right)  +\left(  \kappa-n\right)
\delta_{\operatorname*{graph}}\left(  I_{\operatorname*{Hang}\left(  J\right)
},I\right)  \right]  },
\end{align*}

upon splitting the geodesic $\left[  J,I\right]  =\left[
J,I_{\operatorname*{Hang}\left(  J\right)  }\right]  \vee\left[
I_{\operatorname*{Hang}\left(  J\right)  },I\right]  $ into the segment within
$\operatorname*{Hang}\left(  J\right)  $ and the segment outside
$\operatorname*{Hang}\left(  J\right)  $. Here $\vee$ denotes concatenation of
graph geodesics.
\end{proof}

\begin{lemma}
\label{comp semi}If $L_{1}$ and $L_{2}$ are $\left(  \lambda_{1},\lambda
_{2}\right)  $-well semilocalized operators with constants $C_{1}$ and $C_{2}$
respectively, then $L_{1}\circ L_{2}$ is $\left(  \lambda_{1},\lambda
_{2}\right)  $-well semilocalized with constant $C_{1}+C_{2}$.
\end{lemma}

\begin{proof}
The proof is a straightforward exercise in manipulating the definition of a
$\left(  \lambda_{1},\lambda_{2}\right)  $-well semilocalized operator with
constant $C$.
\end{proof}

Finally we come to the main result of this subsubsection.

\begin{theorem}
Let $T=T_{\mathcal{G}}^{\eta}$ be the doubly smooth Alpert operator associated
with a frame $\left\{  h_{I;\kappa}^{\eta}\right\}  _{I\in\mathcal{G}}$ for
some $\kappa>n$. Then $T^{-1}$ satisfies the strongly localized inequality%
\[
\left\vert \left\langle T^{-1}h_{J;\kappa},h_{I;\kappa}\right\rangle
\right\vert \leq2^{C-\left[  \lambda_{1}\left(  I,J\right)  \delta
_{\operatorname*{graph}}\left(  I,J_{\operatorname*{Hang}\left(  I\right)
}\right)  +\left(  \kappa-n\right)  \delta_{\operatorname*{graph}}\left(
J_{\operatorname*{Hang}\left(  I\right)  },I_{\operatorname*{Hang}\left(
J\right)  }\right)  +\lambda_{3}\left(  I,J\right)  \delta
_{\operatorname*{graph}}\left(  I_{\operatorname*{Hang}\left(  J\right)
},J\right)  \right]  },
\]
for all $I,J\in\mathcal{G}$ where%
\begin{align*}
\lambda_{1}\left(  I,J\right)    & =\left\{
\begin{array}
[c]{ccc}%
\kappa-n & \text{ if } & J_{\operatorname*{Hang}\left(  I\right)  }%
\in\operatorname*{Tower}\left(  I\right)  \\
\frac{n}{2} & \text{ if } & J_{\operatorname*{Hang}\left(  I\right)  }%
\in\operatorname*{Adj}\operatorname*{Skel}\left(  I\right)
\end{array}
\right.  ,\\
\lambda_{3}\left(  I,J\right)    & =\left\{
\begin{array}
[c]{ccc}%
\kappa-n & \text{ if } & I_{\operatorname*{Hang}\left(  J\right)  }%
\in\operatorname*{Tower}\left(  J\right)  \\
\frac{n}{2} & \text{ if } & I_{\operatorname*{Hang}\left(  J\right)  }%
\in\operatorname*{Adj}\operatorname*{Skel}\left(  J\right)
\end{array}
\right.  .
\end{align*}

\end{theorem}

Note that one of the differences between \emph{strongly localized} and
\emph{well semilocalized,} is that the smooth Alpert wavelet $h_{I;\kappa
}^{\eta}$ in \emph{well semilocalized} is replaced by the nonsmooth
$h_{I;\kappa}$ in \emph{strongly localized}. Another difference is that the
geodesic $\left[  I,J\right]  $ is split into three segments for the estimate;
the first in $\operatorname*{Hang}\left(  I\right)  $, the third in
$\operatorname*{Hang}\left(  J\right)  $, and the second outside of both
hangman sets. Moreover, the first and third segments are weighted by
$\lambda_{1}\left(  I,J\right)  $ and $\lambda_{3}\left(  I,J\right)  $
respectively depending on the whereabouts of the stopping times
$J_{\operatorname*{Hang}\left(  I\right)  }$ and $I_{\operatorname*{Hang}%
\left(  J\right)  }$, while the middle segment is weighted by $\kappa-n$.

\begin{proof}
From
\[
\left\vert \left\langle S^{-1}h_{J;\kappa},h_{I;\kappa}\right\rangle
\right\vert =\left\vert \left\langle S^{-1}h_{J;\kappa},S^{-1}h_{I;\kappa
}^{\eta}\right\rangle \right\vert =\left\vert \left\langle \left(  S^{\ast
}\right)  ^{-1}S^{-1}h_{J;\kappa},h_{I;\kappa}^{\eta}\right\rangle \right\vert
=\left\vert \left\langle T^{-1}h_{J;\kappa},h_{I;\kappa}^{\eta}\right\rangle
\right\vert
\]
and Proposition \ref{T inv well semi}, we have%
\begin{equation}
\left\vert \left\langle S^{-1}h_{J;\kappa},h_{I;\kappa}\right\rangle
\right\vert \leq2^{C-\left[  \frac{n}{2}\delta_{\operatorname*{graph}}\left(
I,J_{\operatorname*{Hang}\left(  I\right)  }\right)  +\left(  \kappa-n\right)
\delta_{\operatorname*{graph}}\left(  J_{\operatorname*{Hang}\left(  I\right)
},J\right)  \right]  }.\label{(i)}%
\end{equation}
We then also have%
\begin{equation}
\left\vert \left\langle \left(  S^{\ast}\right)  ^{-1}h_{J;\kappa}%
,h_{I;\kappa}\right\rangle \right\vert =\left\vert \left\langle h_{J;\kappa
},S^{-1}h_{I;\kappa}\right\rangle \right\vert \leq2^{C-\left[  \frac{n}%
{2}\delta_{\operatorname*{graph}}\left(  J,I_{\operatorname*{Hang}\left(
J\right)  }\right)  +\left(  \kappa-n\right)  \delta_{\operatorname*{graph}%
}\left(  I_{\operatorname*{Hang}\left(  J\right)  },I\right)  \right]
},\label{(ii)}%
\end{equation}
in which the roles of $I$ and $J$ are interchanged.

For a given pair of cubes $I$ and $J$ in $\mathcal{G}$, we write the graph
geodesic $\left[  I,J\right]  $ in three segments as
\[
\left[  I,J\right]  =\left[  I,J_{\operatorname*{Hang}\left(  I\right)
}\right]  \vee\left[  J_{\operatorname*{Hang}\left(  I\right)  }%
,I_{\operatorname*{Hang}\left(  J\right)  }\right]  \vee\left[
I_{\operatorname*{Hang}\left(  J\right)  },J\right]
\]
and then combining (\ref{(i)}) and (\ref{(ii)}) gives%
\[
\left\vert \left\langle \left(  S^{\ast}\right)  ^{-1}S^{-1}h_{J;\kappa
},h_{I;\kappa}\right\rangle \right\vert \leq2^{C-\left[  \frac{n}{2}%
\delta_{\operatorname*{graph}}\left(  I,J_{\operatorname*{Hang}\left(
I\right)  }\right)  +\left(  \kappa-n\right)  \delta_{\operatorname*{graph}%
}\left(  J_{\operatorname*{Hang}\left(  I\right)  },I_{\operatorname*{Hang}%
\left(  J\right)  }\right)  +\frac{n}{2}\delta_{\operatorname*{graph}}\left(
I_{\operatorname*{Hang}\left(  J\right)  },J\right)  \right]  }.
\]
which proves the theorem since $T^{-1}=\left(  S^{\ast}\right)  ^{-1}S^{-1}$.
\end{proof}

\section{Proof of the Conversion Theorem}

We start the proof the Conversion Theorem in this section, beginning with the
implication
\[
\left(  \forall\varepsilon>0\right)  \mathcal{B}_{\operatorname*{disj}\nu
}^{\operatorname*{square}}\left(  \otimes_{3}L^{\infty}\rightarrow L^{\frac
{q}{3}};\varepsilon\right)  \Longrightarrow\left(  \forall\varepsilon
>0\right)  \mathcal{A}_{\operatorname*{disj}\nu}^{\operatorname*{square}%
}\left(  \otimes_{3}L^{\infty}\rightarrow L^{\frac{q}{3}};\varepsilon\right)
.
\]
For this we first recall from \cite[Remark 25]{RiSa2}, that the dual form of
the Kakeya strong maximal operator conjecture can be reduced to proving the
case where all of the $\delta$-tubes of length $1$ are contained in the cube
$\left[  -2,2\right]  ^{n}$ (only the case $n=3$ was stated in \cite{RiSa2}
but the proof is the same for $n\geq2$). Scaling up by $2^{2s}$ we see that we
may assume the modulation vectors $u_{I}$ associated to a cube $I $ are
contained in the cube $\left[  2^{2s+1},2^{2s+1}\right]  ^{n}$. Moreover,
recalling that $U$ is a small cube near the origin in $\mathbb{R}^{n-1}$, that
the unit normal vectors $\left\{  \Phi\left(  y\right)  \right\}  _{y\in U}$
are close to $\mathbf{e}_{n}$, and that the associated tubes have vertical
height about $2^{2s}$, we may also assume that the vectors $u_{I}$ are
\emph{perpendicular} to $\mathbf{e}_{n}$ and have length bounded by
$2^{2s+10}$(by extending the length of the $2^{s}\times...\times2^{s}%
\times2^{2s}$ tubes by a factor of $9$ if necessary). Thus we may assume
\begin{equation}
u_{I}=\left(  u_{I}^{\prime},0\right)  \in\left[  2^{2s+10},2^{2s+10}\right]
^{n-1}\times\mathbb{R}.\label{may}%
\end{equation}

Viewing $\mathcal{G}$ as a tree, we define the `tree distance'
$\operatorname*{dtree}\left(  J,I\right)  $ between two dyadic cubes in
$\mathcal{G}$ to be the length of the (unique) shortest path joining $I$ and
$J$ in the tree $\mathcal{G}$. We will need the following lemma whose proof is
deferred to the next subsection.

\begin{lemma}
\label{scales}We have%
\begin{equation}
\left\vert \left\langle \mathsf{M}_{u}^{s}\bigtriangleup_{I}^{\varphi
}f,h_{J;\kappa}^{\eta}\right\rangle \right\vert \leq2^{D-\kappa
\operatorname*{dtree}\left(  J,\mathcal{G}_{2s}\left[  I\right]  \right)
},\ \ \ \ \ \text{for }I,J\in\mathcal{G},\label{scales ineq}%
\end{equation}
for some positive constant $C.$
\end{lemma}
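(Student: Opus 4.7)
My plan is to reduce this to an oscillatory--integral estimate of the form
$$\mathcal{I}(I,J,u) \;\equiv\; \int e^{iu\cdot x}\,\varphi_I(x)\,h_{J;\kappa}^{\eta}(x)\,dx,$$
after pulling out the harmless scalar $\langle f,\varphi_I\rangle$ from $\bigtriangleup_I^{\varphi}f = \langle f,\varphi_I\rangle\varphi_I$. Recall that by (\ref{may}) we have $|u|\approx 2^{2s}$, while $\ell(I)=2^{-s}$ and $\ell(J)=2^{-t}$. Heuristically, the function $e^{iu\cdot x}\varphi_I$ oscillates with wavelength $\approx 2^{-2s}$ and is localized to $I$, so its natural Alpert scale inside $I$ is precisely $2s$; tree distance from that layer should control the inner product with $h_{J;\kappa}^{\eta}$. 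I will split into three regimes according to the position of $\ell(J)$ relative to the critical scale $2^{-2s}$.

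\textbf{Regime 1 (disjoint or halo-only supports).} If $(1+\eta)J\cap 2I=\emptyset$ then $\mathcal{I}(I,J,u)=0$, and a short geometric check verifies that every such $J$ has tree distance to $\mathcal{G}_{2s}[I]$ comparable to $\log_2(\mathrm{dist}(J,I)/\min(\ell(I),\ell(J)))+|t-2s|$, which makes the inequality trivial.

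\textbf{Regime 2 (oscillation, $t\le 2s$).} Here I will integrate by parts $N$ times in the direction $u/|u|$, using $e^{iu\cdot x}=(i|u|)^{-1}\partial_{u/|u|}e^{iu\cdot x}$. Since $\varphi_I h_{J;\kappa}^{\eta}$ is smooth and compactly supported there are no boundary terms, giving
$$|\mathcal{I}(I,J,u)|\;\le\;|u|^{-N}\,\|\nabla^{N}(\varphi_I h_{J;\kappa}^{\eta})\|_{L^1}.$$
A Leibniz expansion together with $\|\nabla^k\varphi_I\|_{\infty}\le C\ell(I)^{-1-k}$, $\|\nabla^k h_{J;\kappa}^{\eta}\|_{\infty}\le C\ell(J)^{-1-k}$, and the support-size factor $\min(\ell(I),\ell(J))^{2}$ reduces the right-hand side to $C\cdot 2^{-s(N+1)+t}$ when $t\le s$ and to $C\cdot 2^{-2sN+s+(t-1)N-t}$ when $s<t\le 2s$. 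Taking $N=2\kappa$ in the first sub-case and $N=\kappa$ in the second, an elementary arithmetic check yields $|\mathcal{I}|\le C\cdot 2^{-\kappa(2s-t)}$. In both sub-cases the tree distance from $J$ to $\mathcal{G}_{2s}[I]$ is exactly $2s-t$ (either $J$ is an ancestor sitting above all $K\in\mathcal{G}_{2s}[I]$, or $J\subset I$ but above $\mathcal{G}_{2s}[I]$), so the required bound follows.

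\textbf{Regime 3 (vanishing moments, $t>2s$).} Now the modulation is essentially constant on the scale of $J$, so integration by parts is too weak and instead I exploit the $\kappa$ vanishing moments of $h_{J;\kappa}^{\eta}$. Writing $e^{iu\cdot x}\varphi_I(x)=P_{J}(x)+R_{J}(x)$ with $P_{J}$ the Taylor polynomial of degree $\kappa-1$ around the center $c_{J}$, the vanishing moments force $\int P_{J}h_{J;\kappa}^{\eta}=0$, and the standard Taylor-remainder bound gives
$$|\mathcal{I}(I,J,u)|\;\le\;C\,\|\nabla^{\kappa}(e^{iu\cdot x}\varphi_I)\|_{\infty}\,\ell(J)^{\kappa}\,\|h_{J;\kappa}^{\eta}\|_{L^{1}}\;\le\;C\,(|u|+\ell(I)^{-1})^{\kappa}\,\ell(I)^{-1}\,\ell(J)^{\kappa+1}.$$
With $|u|\approx 2^{2s}$ this simplifies to $C\cdot 2^{s(2\kappa+1)-t(\kappa+1)}=C\cdot 2^{-\kappa(t-2s)}\cdot 2^{s-t}\le C\cdot 2^{-\kappa(t-2s)}$, which is exactly $2^{-\kappa\,\mathrm{dtree}(J,\mathcal{G}_{2s}[I])}$ when $J\subset K$ for some $K\in\mathcal{G}_{2s}[I]$.

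\textbf{Main obstacle.} The truly delicate case is the off-diagonal one where $J$ neither contains nor is contained in $I$ but its $\eta$-halo nevertheless meets $2I$, since then the tree distance picks up a lateral ``climb to the lowest common ancestor'' contribution that has to be absorbed on top of the scale-based decay above. I expect to handle this by repeating the Regime 2 or Regime 3 argument on the much smaller effective support $2I\cap(1+\eta)J$, whose diameter is controlled by $\min(\ell(I),\ell(J))$, and by trading the gain obtained from restricting the integration domain for the factor $2^{-\kappa\cdot(\text{lateral part of dtree})}$. All other computations are routine, resting only on the frame theory of Section~2 and elementary oscillatory--integral estimates.
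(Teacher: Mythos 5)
Your proposal follows essentially the same route as the paper: exploit the mismatch between the modulation scale $2^{-2s}$ and the wavelet scale $\ell(J)$, using integration by parts when $\ell(J)\geq 2^{-2s}$ (your Regime 2 corresponds to the paper's cases 3, 4, and 5) and the $\kappa$ vanishing moments of $h_{J;\kappa}^{\eta}$ when $\ell(J)<2^{-2s}$ (your Regime 3 is the paper's case 2), with the trivial estimate covering nearby and disjointly-supported $J$ (your Regime 1, the paper's case 1 and the support condition). Your arithmetic in Regime 2 has a small slip (the intermediate exponent should read $-2sN + tN + s - t$ rather than $-2sN + (t-1)N + s - t$), but it does not affect the conclusion; and your "main obstacle" discussion of the lateral, off-diagonal contribution to $\operatorname*{dtree}$ is left at the level of a plan — but note that the paper's own proof (which simply assumes $J\subset 2I$ or tower-adjacency and invokes the trivial bound or integration by parts) handles that subtlety no more explicitly than you do, so the two treatments are at a comparable level of rigor.
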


\begin{proof}
[Proof of the Conversion Theorem ]Recall that%
\begin{align*}
\mathcal{S}_{\operatorname*{Fourier}}^{s,\varphi,\mathbf{u}}f\left(
\xi\right)   & \equiv\left(  \sum_{I\in\mathcal{G}_{s}\left[  U\right]
}\left\vert \left(  \mathsf{M}_{\mathbf{u}}^{s}\Phi_{\ast}\bigtriangleup
_{I}^{\varphi}f\right)  ^{\wedge}\left(  \xi\right)  \right\vert ^{2}\right)
^{\frac{1}{2}}=\left(  \sum_{I\in\mathcal{G}_{s}\left[  U\right]  }\left\vert
\left(  \mathsf{M}_{\mathbf{u}}^{s}\Phi_{\ast}\bigtriangleup_{I}^{\varphi
}f\right)  ^{\wedge}\left(  \xi\right)  \right\vert ^{2}\right)  ^{\frac{1}%
{2}}\\
& =\left(  \sum_{I\in\mathcal{G}_{s}\left[  U\right]  }\left\vert \left(
\Phi_{\ast}\mathsf{M}_{u^{\prime}}^{s}\bigtriangleup_{I}^{\varphi}f\right)
^{\wedge}\left(  \xi\right)  \right\vert ^{2}\right)  ^{\frac{1}{2}},
\end{align*}
where (see \cite[(4.6)]{RiSa2})%
\begin{align}
\mathsf{M}_{\mathbf{u}}^{s}\Phi_{\ast}\bigtriangleup_{I}^{\varphi}f\left(
z\right)   & =\Phi_{\ast}\mathsf{M}_{u}^{s}\bigtriangleup_{I}^{\varphi
}f\left(  z\right)  ,\label{comm}\\
\text{and }\mathsf{M}_{u}^{s}f\left(  y\right)   & \equiv\sum_{L\in
\mathcal{G}_{s}\left[  U\right]  }e^{iu_{L}\cdot y}\bigtriangleup_{L}%
^{\varphi}f\left(  y\right)  ,\nonumber
\end{align}
and the `father' wavelet psuedoprojection $\bigtriangleup_{I}^{\varphi}$ is
defined in Definition \ref{def Q}. We may assume without loss of generality
that $2^{t}\leq\left\vert u_{I}\right\vert \leq2^{t+10}$ where $t=2s$.

We first show that\ for every $q>3$, $N>0$ and $\varepsilon>0$, there is a
positive constant $C_{q,\delta,\varepsilon}$ such that
\begin{equation}
\left\Vert \left(  \sum_{I\in\mathcal{G}_{s}\left[  U\right]  }\left\vert
\mathcal{E}\mathsf{M}_{u}^{s}\bigtriangleup_{I}^{\varphi}f\right\vert
^{2}\right)  ^{\frac{1}{2}}\right\Vert _{L^{q}}\leq C_{q,\delta,\varepsilon
}2^{\varepsilon s}\left\Vert \left(  \sum_{I\in\mathcal{G}_{s}\left[
U\right]  }\left\vert \mathcal{E}\sum_{J\in\mathcal{G}_{t}^{N}\left[
I\right]  }\left\langle \mathsf{M}_{u}^{s}\bigtriangleup_{I}^{\varphi
},\bigtriangleup_{J;\kappa}^{\eta}\right\rangle \bigtriangleup_{J;\kappa
}^{\eta}f\right\vert ^{2}\right)  ^{\frac{1}{2}}\right\Vert _{L^{q}%
},\label{must show again}%
\end{equation}
for all $s\in\mathbb{N}$, all $f\in L^{\infty}\left(  U\right)  $, and all
sequences $\mathbf{u}\in\mathcal{V}_{s}^{n}$, where $2^{t-1}<\left\vert
u_{I}\right\vert \leq2^{t}$ and $t=2s$. Here
\[
\mathcal{G}_{t}^{N}\left[  I\right]  \equiv\bigcup_{m=t-N}^{t+N}%
\mathcal{G}_{m}\left[  I\right]
\]
consists of the squares $J$ with scale within distance $N$ of $t$.

Our plan of attack begins by appealing to Lemma \ref{scales} to conclude that
\[
T\mathsf{M}_{u}^{s}\bigtriangleup_{I}^{\varphi}f=\sum_{J}\left\langle
\mathsf{M}_{u}^{s}\bigtriangleup_{I}^{\varphi}f,h_{J;\kappa}^{\eta
}\right\rangle h_{J;\kappa}^{\eta}=\sum_{J\in\mathcal{G}_{s}%
:\ \operatorname*{dtree}\left(  J,\mathcal{G}_{t}\left[  I\right]  \right)
<N}\left\langle \mathsf{M}_{u}^{s}\bigtriangleup_{I}^{\varphi}f,h_{J;\kappa
}^{\eta}\right\rangle h_{J;\kappa}^{\eta}+\operatorname*{GeoFourDec},
\]
where $\operatorname*{GeoFourDec}$ is introduced and explained in Notation
\ref{geo Four dec}. Thus we have%
\[
\mathsf{M}_{u}^{s}\bigtriangleup_{I}^{\varphi}f=\sum_{J\in\mathcal{G}%
_{s}:\ \operatorname*{dtree}\left(  J,\mathcal{G}_{t}\left[  I\right]
\right)  <N}\left\langle \mathsf{M}_{u}^{s}\bigtriangleup_{I}^{\varphi
}f,h_{J;\kappa}^{\eta}\right\rangle T^{-1}h_{J;\kappa}^{\eta}%
+\operatorname*{GeoFourDec}.
\]
From Lemmas \ref{T inv well} and \ref{comp} we see that $T^{-2}$ is
$\lambda_{\kappa}$-well localized, and so we obtain%
\[
T^{-1}h_{J;\kappa}^{\eta}=\sum_{K\in\mathcal{G}}\left\langle T^{-2}%
h_{J;\kappa}^{\eta},h_{K;\kappa}^{\eta}\right\rangle h_{K;\kappa}^{\eta}%
=\sum_{K\in\mathcal{G}\text{:\ }\operatorname*{dtree}\left(  K,J\right)
<N}\left\langle T^{-2}h_{J;\kappa}^{\eta},h_{K;\kappa}^{\eta}\right\rangle
h_{K;\kappa}^{\eta}+\operatorname*{GeoFourDec}.
\]
Combining the last two displays gives%
\begin{align*}
& \mathsf{M}_{u}^{s}\bigtriangleup_{I}^{\varphi}f=\sum_{J\in\mathcal{G}%
_{s}:\ \operatorname*{dtree}\left(  J,\mathcal{G}_{t}\left[  I\right]
\right)  <N}\left\langle \mathsf{M}_{u}^{s}\bigtriangleup_{I}^{\varphi
}f,h_{J;\kappa}^{\eta}\right\rangle \left\{  \sum_{K\in\mathcal{G}%
\text{:\ }\operatorname*{dtree}\left(  K,J\right)  <N}\left\langle
T^{-2}h_{J;\kappa}^{\eta},h_{K;\kappa}^{\eta}\right\rangle h_{K;\kappa}^{\eta
}\right\}  +\operatorname*{GeoFourDec}\\
& =\sum_{K\in\mathcal{G}\text{:\ }\operatorname*{dtree}\left(  J,\mathcal{G}%
_{t}\left[  I\right]  \right)  <2N}\left\{  \sum_{J\in\mathcal{G}%
:\ \operatorname*{dtree}\left(  J,K\cup\mathcal{G}_{t}\left[  I\right]
\right)  <N}\left\langle \mathsf{M}_{u}^{s}\bigtriangleup_{I}^{\varphi
}f,h_{J;\kappa}^{\eta}\right\rangle \left\langle T^{-2}h_{J;\kappa}^{\eta
},h_{K;\kappa}^{\eta}\right\rangle \right\}  h_{K;\kappa}^{\eta}%
+\operatorname*{GeoFourDec},
\end{align*}
which shows that $\mathsf{M}_{u}^{s}\bigtriangleup_{I}^{\varphi}f$ is a sum of
smooth Alpert polynomials at scales \emph{near} $s$ and supported \emph{near}
$I$. It remains to show that these polynomials are of Kakeya type (Lemma
\ref{scales} is proved in the next subsection).

We now analyze in more detail the constant $\gamma_{K}$ depending on $K$ that
is given by the inner sum over $J$ in braces, namely%
\begin{equation}
\gamma_{K}\equiv\sum_{J\in\mathcal{G}:\ \operatorname*{dtree}\left(
J,K\cup\mathcal{G}_{s}\left[  I\right]  \right)  <N}\left\langle
\mathsf{M}_{u}^{s}\bigtriangleup_{I}^{\varphi}f,h_{J;\kappa}^{\eta
}\right\rangle \left\langle T^{-2}h_{J;\kappa}^{\eta},h_{K;\kappa}^{\eta
}\right\rangle .\label{inner sum}%
\end{equation}
Since $\mathbf{u}_{I}=\left(  u_{I},0\right)  $ is horizontal with $u_{I}%
\in\mathbb{R}^{2}$, we have $\mathbf{u}_{I}\cdot\Phi\left(  y\right)
=u_{I}\cdot y$, and we consider first the preliminary expression%
\begin{equation}
\sum_{J\in\mathcal{G}_{t}\left[  I\right]  }\left\langle \mathsf{M}%
_{\mathbf{u}}^{s,\varphi}\bigtriangleup_{I}^{\varphi}f,h_{J;\kappa}^{\eta
}\right\rangle =\sum_{J\in\mathcal{G}_{t}\left[  I\right]  }\left(  \int
e^{iu_{I}^{\prime}\cdot y}\bigtriangleup_{I}^{\varphi}f\left(  y\right)
h_{J;\kappa}^{\eta}\left(  y\right)  dy\right)  =\left\langle f,\varphi
_{I}\right\rangle \sum_{J\in\mathcal{G}_{t}\left[  I\right]  }\widehat
{\varphi_{I}h_{J;\kappa}^{\eta}}\left(  u_{I}\right)  ,\label{prelim}%
\end{equation}
where $\widehat{\varphi_{I}h_{J;\kappa}^{\eta}}$ denotes the $2$-dimensional
Fourier transform of $\varphi_{I}h_{J;\kappa}^{\eta}$.

Now using that $\mathbf{u}_{I}=\left(  u_{I}^{\prime},0\right)  $ is
horizontal and $h_{J;\kappa}^{\eta}$ is translation invariant, we have the
$2$-dimensional `equality' where $\widehat{}$ denotes the $2$-dimensional
Fourier transform,%
\begin{align}
& \widehat{\varphi_{I}h_{J;\kappa}^{\eta}}\left(  u_{I}\right)  =\int
e^{-iu_{I}\cdot y}\varphi_{I}h_{J;\kappa}^{\eta}\left(  y\right)  dy=\int
e^{-iu_{I}\cdot y}\varphi_{I}\tau_{c_{J}}h_{t;\kappa}^{\eta}\left(  y\right)
dy\label{mod prop}\\
& \sim\int e^{-iu_{I}\cdot y}\tau_{c_{J}}\left(  \varphi_{I_{s}}h_{t;\kappa
}^{\eta}\right)  \left(  y\right)  dy=\int\tau_{c_{J}} \left[  e^{-iu_{I}%
\cdot\left(  y+c_{J}\right)  }\left(  \varphi_{I_{s}}h_{t;\kappa}^{\eta
}\right)  \left(  y\right)  \right]  dy\nonumber\\
& =\int e^{-iu_{I}\cdot\left(  y+c_{J}\right)  }\left(  \varphi_{I_{s}%
}h_{t;\kappa}^{\eta}\right)  \left(  y\right)  dy=e^{-iu_{I}\cdot c_{J}}\int
e^{-iu_{I}^{\prime}\cdot y}\left(  \varphi_{I_{s}}h_{t;\kappa}^{\eta}\right)
\left(  y\right)  dy=e^{-iu_{I}\cdot c_{J}}\widehat{\varphi_{I_{s}}%
h_{t;\kappa}^{\eta}}\left(  u_{I}\right)  ,\nonumber
\end{align}
where $t=2s$, and the tilde $\sim$ is used to denote that $\varphi_{I}%
\tau_{c_{J}}h_{t;\kappa}^{\eta}=\tau_{c_{J}}\left(  \varphi_{I_{s}}%
h_{t;\kappa}^{\eta}\right)  $ only for $c_{J}\in I$. The square $I_{s}$ is the
square centered at the origin of side length $2^{-s}$, and $h_{t;\kappa}%
^{\eta}$ is the smooth Alpert wavelet $h_{I_{t};\kappa}^{\eta}$. Note that the
dependence of $\widehat{\varphi_{I}h_{J;\kappa}^{\eta}}\left(  u_{I}\right)  $
on the cube $J\in\mathcal{G}_{t}\left[  I\right]  $ has been factored out as
the exponential $e^{-iu_{I}\cdot c_{J}}$, leaving the remaining factor
$\widehat{\varphi_{I_{0}}h_{t;\kappa}^{\eta}}\left(  u_{I}\right)  $ to depend
only on the side length $2^{-t}$ of $J$. Thus the smooth Alpert polynomial
$\sum_{J\in\mathcal{G}_{t}\left[  I\right]  }\left\langle \mathsf{M}%
_{\mathbf{u}}^{s,\varphi}\bigtriangleup_{I}^{\varphi}f,h_{J;\kappa}^{\eta
}\right\rangle h_{J;\kappa}^{\eta}$ at scale $t$ is of Kakeya type.

Now we show that the general form given in (\ref{inner sum}) is a controlled
sum of smooth Alpert polynomials at scale $t$ of Kakeya type. Indeed,
\[
\gamma_{K}=\sum_{J\in\mathcal{G}:\ \operatorname*{dtree}\left(  J,K\cup
\mathcal{G}_{s}\left[  I\right]  \right)  <N}\left\langle \mathsf{M}_{u}%
^{s}\bigtriangleup_{I}^{\varphi}f,h_{J;\kappa}^{\eta}\right\rangle
\left\langle T^{-2}h_{J;\kappa}^{\eta},h_{K;\kappa}^{\eta}\right\rangle
\]
inherits the modulation property
\[
\gamma_{\tau_{c_{J}}K}=e^{-iu_{I}\cdot c_{J}}\gamma_{K}%
\]
from (\ref{mod prop}) upon using the following translation invariance property
of $T^{-2}$.

Let $\tau_{y}f\left(  x\right)  =f\left(  x-y\right)  $, and compute%
\begin{align*}
S\tau_{z}f\left(  x\right)   & =\sum_{K\in\mathcal{D}}\left\langle \tau
_{z}f,h_{K;\kappa}\right\rangle h_{K;\kappa}^{\eta}\left(  x\right)
=\sum_{K\in\mathcal{D}}\left(  \int f\left(  y-z\right)  h_{K;\kappa}\left(
y\right)  dy\right)  h_{K;\kappa}^{\eta}\left(  x\right) \\
& =\sum_{K\in\mathcal{D}}\int f\left(  y^{\prime}\right)  h_{K;\kappa}\left(
y^{\prime}+z\right)  dy^{\prime}h_{K;\kappa}^{\eta}\left(  x\right)
=\sum_{K\in\mathcal{D}}\int f\left(  y^{\prime}\right)  \left\{  h_{K;\kappa
}\left(  y^{\prime}+z\right)  h_{K;\kappa}^{\eta}\left(  x\right)  \right\}
dy^{\prime}%
\end{align*}
and%
\begin{align*}
\tau_{z}\left(  Sf\right)  \left(  x\right)   & =Sf\left(  x-z\right)
=\sum_{K\in\mathcal{D}}\left\langle f,h_{K;\kappa}\right\rangle h_{K;\kappa
}^{\eta}\left(  x-z\right)  =\sum_{K\in\mathcal{D}}\left(  \int f\left(
y^{\prime}\right)  h_{K;\kappa}\left(  y^{\prime}\right)  dy^{\prime}\right)
h_{K;\kappa}^{\eta}\left(  x-z\right) \\
& =\sum_{K\in\mathcal{D}}\int f\left(  y^{\prime}\right)  \left\{
h_{K;\kappa}\left(  y^{\prime}\right)  h_{K;\kappa}^{\eta}\left(  x-z\right)
\right\}  dy^{\prime}.
\end{align*}
Thus we have%
\begin{align*}
S\tau_{z}f\left(  x\right)  -\tau_{z}\left(  Sf\right)  \left(  x\right)   &
=\sum_{K\in\mathcal{D}}\int f\left(  y^{\prime}\right)  \left\{  h_{K;\kappa
}\left(  y^{\prime}+z\right)  h_{K;\kappa}^{\eta}\left(  x\right)
-h_{K;\kappa}\left(  y^{\prime}\right)  h_{K;\kappa}^{\eta}\left(  x-z\right)
\right\}  dy^{\prime}\\
& =\int f\left(  y^{\prime}\right)  \left\{  \sum_{K\in\mathcal{D}}\left[
h_{K-z;\kappa}\left(  y^{\prime}\right)  h_{K;\kappa}^{\eta}\left(  x\right)
-h_{K;\kappa}\left(  y^{\prime}\right)  h_{K+z;\kappa}^{\eta}\left(  x\right)
\right]  \right\}  dy^{\prime},
\end{align*}
by the translation invariance of the Alpert and smooth Alpert wavelets. Now if
we set $K^{\prime}\equiv K-z$, and take $z\in\ell\left(  K\right)
\mathbb{Z}^{n-1}$then%
\[
h_{K-z;\kappa}\left(  y^{\prime}\right)  h_{K;\kappa}^{\eta}\left(  x\right)
=h_{K^{\prime};\kappa}\left(  y^{\prime}\right)  h_{K^{\prime}+z;\kappa}%
^{\eta}\left(  x\right)  ,
\]
and so the sum in braces is%
\[
\sum_{K\in\mathcal{D}}\left[  h_{K^{\prime};\kappa}\left(  y^{\prime}\right)
h_{K^{\prime}+z;\kappa}^{\eta}\left(  x\right)  -h_{K;\kappa}\left(
y^{\prime}\right)  h_{K+z;\kappa}^{\eta}\left(  x\right)  \right]  ,
\]
in which most terms cancel at each scale, except for some of the cubes $I$ and
$I^{\prime}$ near the boundary of $U$. However, if $f$ is supported well
inside $U$, which we may assume, then $y^{\prime}$ lives well inside $U $, and
these boundary terms vanish. Thus we conclude that when $S$ acts on a
pseudoprojection of the form $F=\sum_{s=v}^{\infty}\sum_{I\in G_{s}\left[
U\right]  }\bigtriangleup_{I;\kappa}^{\eta}f$ with `frequencies' $v$ and
larger, then we have,
\[
S\tau_{z}F=\tau_{z}SF,\ \ \ \ \ \text{provided }z\in2^{-v}\mathbb{Z}^{n-1}.
\]
The same is thus true of $S^{\ast}$ and $T=SS^{\ast}$, and hence also of
$T^{-1}$, and finally then of $T^{-2}$ as well.

We remark that
\[
S\mathsf{Q}^{t}f=\sum_{K\in\mathcal{G}_{t}\left[  U\right]  }S\bigtriangleup
_{K;\kappa}f=\sum_{K\in\mathcal{G}_{t}\left[  U\right]  }\bigtriangleup
_{K;\kappa}^{\eta}f=\sum_{K\in\mathcal{G}_{t}\left[  U\right]  }\phi
_{\eta2^{-t}}\ast\bigtriangleup_{K;\kappa}f=\phi_{\eta2^{-t}}\ast
\mathsf{Q}^{t}f,
\]
so that when restricted to a projection $\mathsf{Q}^{t}f$ at level $t$, the
operator $S$ is simply convolution with $\phi_{\eta2^{-t}}$, and hence
satisfies%
\[
\tau_{z}\left(  S\mathsf{Q}^{t}f\right)  =S\tau_{z}\left(  \mathsf{Q}%
^{t}f\right)  ,\ \ \ \ \ \text{for all }z,
\]
but of course $\tau_{z}\left(  \mathsf{Q}^{t}f\right)  $ is not another
projection $\mathsf{Q}^{t}\tau_{z}f$ unless $z\in2^{-t}\mathbb{Z}^{n-1}$.
\end{proof}

\subsection{Smooth Alpert decomposition of modulated wavelets}

Here we prove Lemma \ref{scales}. Recall that the `tree distance'
$\operatorname*{dtree}\left(  J,I\right)  $ between two dyadic cubes in
$\mathcal{G}$ is the length of the (unique) shortest path joining $I$ and $J$
in the tree $\mathcal{G}$.

\begin{proof}
[Proof of Lemma \ref{scales}]Recall that%
\[
\left\langle \mathsf{M}_{u}^{s}\bigtriangleup_{I}^{\varphi}f,h_{J;\kappa
}^{\eta}\right\rangle =\left\langle f,\varphi_{I}\right\rangle \left\langle
\mathsf{M}_{u}^{s}\varphi_{I},h_{J;\kappa}^{\eta}\right\rangle =\left\langle
f,\varphi_{I}\right\rangle \int e^{iu_{I}\cdot y}\varphi_{I}\left(  y\right)
h_{J;\kappa}^{\eta}\left(  y\right)  dy,
\]
and suppose without loss of generality that $I\in\mathcal{G}_{s}\left[
U\right]  $ and%
\[
2^{2s}\leq\left\vert u_{I}\right\vert \leq2^{2s+10},
\]
so that the wavelength of the modulation $e^{iu_{I}\cdot\Phi\left(  x\right)
}$ on the cube $I$ of side length $s$ is roughly $2^{-2s}$. We may assume that
condition
\begin{equation}
\left(  1+\eta\right)  J\cap\left(  1+\eta\right)  I\neq\emptyset,\label{pd}%
\end{equation}
holds since otherwise the functions $\varphi_{I}$ and $h_{J;\kappa}^{\eta} $
have disjoint support and $\left\langle \mathsf{M}_{u}^{s}\varphi
_{I},h_{J;\kappa}^{\eta}\right\rangle =0$. Then we consider the following five cases,

(\textbf{1}): (\ref{pd}) holds and $\operatorname*{dtree}\left(
J,\mathcal{G}_{2s}\left[  I\right]  \right)  \leq D$,

(\textbf{2}): (\ref{pd}) holds and $\operatorname*{dtree}\left(
J,\mathcal{G}_{2s}\left[  I\right]  \right)  >D$ and $\ell\left(  J\right)
\leq2^{-2s}$,

(\textbf{3}): (\ref{pd}) holds and $\operatorname*{dtree}\left(
J,\mathcal{G}_{2s}\left[  I\right]  \right)  >D$ and $2^{-2s}\leq\ell\left(
J\right)  \leq2^{-s} $,

(\textbf{4}): (\ref{pd}) holds and $\operatorname*{dtree}\left(
J,\mathcal{G}_{2s}\left[  I\right]  \right)  >D$ and $2^{-s}\leq\ell\left(
J\right)  \leq1$,

(\textbf{5}): (\ref{pd}) holds and $\operatorname*{dtree}\left(
J,\mathcal{G}_{2s}\left[  I\right]  \right)  >D$ and $\ell\left(  J\right)
>1$.

In case (\textbf{1}) we simply use the trivial estimate%
\begin{align*}
\left\vert \left\langle \mathsf{M}_{u}^{s}\varphi_{I},h_{J;\kappa}^{\eta
}\right\rangle \right\vert  & \leq\int\left\vert \varphi_{I}h_{J;\kappa}%
^{\eta}\right\vert \leq\left\vert I\right\vert ^{-\frac{1}{2}}\left\vert
J\right\vert ^{-\frac{1}{2}}\left\vert \left(  1+\eta\right)  J\cap\left(
1+\eta\right)  I\right\vert \\
& \lesssim\min\left\{  \left(  \frac{\left\vert J\right\vert }{\left\vert
I\right\vert }\right)  ^{\frac{1}{2}},\left(  \frac{\left\vert I\right\vert
}{\left\vert J\right\vert }\right)  ^{\frac{1}{2}}\right\}  \leq1,
\end{align*}
which gives (\ref{scales ineq}).

In case (\textbf{2}) we have $J\subset2I$, $\ell\left(  J\right)  =2^{-r}$ and
$\operatorname*{dtree}\left(  J,\mathcal{G}_{2s}\left[  U\right]  \right)
>N$, so that $r>2s+N$. Then using the moment vanishing of $h_{J;\kappa}^{\eta
}$ we have%
\begin{align*}
\left\vert \int e^{iu_{I}\cdot y}\varphi_{I}\left(  y\right)  h_{J;\kappa
}^{\eta}\left(  y\right)  dy\right\vert  & \lesssim\left(  \frac{\ell\left(
J\right)  }{2^{-2s}}\right)  ^{\kappa}\left\Vert h_{J;\kappa}^{\eta
}\right\Vert _{L^{1}}\left\Vert \varphi_{I}\right\Vert _{L^{\infty}}\\
& \lesssim\left(  \frac{2^{-r}}{2^{-2s}}\right)  ^{\kappa}\frac{2^{-r}}%
{2^{-s}}\leq\left(  \frac{2^{-r}}{2^{-2s}}\right)  ^{\kappa+1}=C2^{-\left(
r-2s\right)  \left(  \kappa+1\right)  },
\end{align*}
which gives (\ref{scales ineq}).

In case (\textbf{3}) we have $J\subset2I$, $\ell\left(  J\right)  =2^{-r}$ and
$\operatorname*{dtree}\left(  J,\mathcal{G}_{2s}\left[  U\right]  \right)
>D$, so that $s\leq r<2s-N$. Using that the oscillatory term $e^{iu_{I}%
\cdot\Phi\left(  y\right)  }$ has wavelength $2^{-2s}$, along with the
smoothness of $h_{I;\kappa}^{\eta}\left(  y\right)  h_{J;\kappa}^{\eta}$ with
wavelength $2^{-r}$, we have for any $N\in\mathbb{N}$ that integration by
parts yields%
\begin{align*}
\left\vert \int e^{iu_{I}\cdot y}\varphi_{I}\left(  y\right)  h_{J_{1};\kappa
}^{\eta}\left(  y\right)  dy\right\vert  & \leq C_{N}\left(  \frac{2^{-2s}%
}{2^{-r}}\right)  ^{N}\left\Vert h_{J;\kappa}^{\eta}\right\Vert _{L^{1}%
}\left\Vert \varphi_{I}\right\Vert _{L^{\infty}}\\
& \leq C_{N}\left(  \frac{2^{-2s}}{2^{-r}}\right)  ^{N}\frac{\ell\left(
J\right)  }{\ell\left(  I\right)  }\lesssim\left(  \frac{2^{-2s}}{2^{-r}%
}\right)  ^{N}=2^{-\left(  r-2s\right)  N},
\end{align*}
which gives (\ref{scales ineq}) if we choose $N\geq\kappa$.

In case (\textbf{4}) we have that $J$ lies in the tower above $I$ or one of
its eight adjacent squares. Since the estimates are similar in each case we
will assume that $J$ lies in the tower above $I$ with side length at most $1$,
say $2^{-s}\leq\ell\left(  J\right)  \leq1$. Again we use that the oscillatory
term $e^{iu_{I}\cdot\Phi\left(  y\right)  }$ has wavelength $2^{-t}$, but this
time with the smoothness of $h_{I;\kappa}^{\eta}\left(  y\right)  h_{J;\kappa
}^{\eta}$ having wavelength $2^{-s}$, and so for any $N\in\mathbb{N}$,
integration by parts yields%
\begin{align*}
& \left\vert \int e^{iu_{I}\cdot y}\varphi_{I}\left(  y\right)  h_{J;\kappa
}^{\eta}\left(  y\right)  dy\right\vert \leq C_{N}\left(  \frac{2^{-2s}%
}{2^{-s}}\right)  ^{N}\left\Vert h_{J_{2};\kappa}^{\eta}\right\Vert _{L^{1}%
}\left\Vert \varphi_{I}\right\Vert _{L^{\infty}}\\
& \leq C_{N}\left(  \frac{2^{-2s}}{2^{-s}}\right)  ^{N}\frac{\ell\left(
J\right)  }{\ell\left(  I\right)  }\lesssim\left(  \frac{2^{-2s}}{2^{-s}%
}\right)  ^{N}=2^{-sN}\leq2^{-\frac{1}{2}\operatorname*{dtree}\left(
J,\mathcal{G}_{t}\left[  I\right]  \right)  N}=2^{-\kappa\operatorname*{dtree}%
\left(  J,\mathcal{G}_{t}\left[  I\right]  \right)  },
\end{align*}
if we choose $N\geq2\kappa$.

In case (\textbf{5}) we can control the corresponding term involving squares
larger than $U$, using the argument that was used to prove an analogous result
in Lemma 2 in \cite{Saw7}, and we do not repeat the argument here.
\end{proof}

\subsection{Passage from multiscale to singe scale and completion of the
proof}

To pass from the multiscale inequality (\ref{must show again}) to the desired
single scale inequality (\ref{expect}), we will use the Kakeya square function
equivalence (\ref{Kak square equiv}) together with the Bourgain Guth
pigeonholing argument as was done in \cite{RiSa2}. One of the key points in
the proof is a parabolic rescaling argument from \cite{TaVaVe}, that requires
parabolic invariance of an appropriate family of functions (which in
\cite{TaVaVe} and \cite{BoGu} was simply the family of functions in
$L^{\infty}$ bounded by $1$). The appropriate family of functions here is the
collection of subunit smooth Alpert polynomials of Kakeya type, which is
easily seen to be parabolically invariant. The details of the proof then
proceed as in \cite{RiSa2} are not repeated here.

Now we can complete our proof of the forward implication (\ref{for}) of the
Conversion Theorem. Indeed, we write%
\[
\pm\mathsf{Q}_{\mathbf{u}}^{s}f_{k}=\pm\mathsf{M}_{\mathbf{u}}^{s}%
f_{k}+\operatorname*{GeoFourDec}=M_{\pm}F_{k}+\operatorname*{GeoFourDec}%
\]
for an appropriate subunit Alpert polynomial $F_{k}$ at scale $s$. Then from
Khintchine's inequality we obtain that the left hand side of
(\ref{single tri Four}) satisfies%
\begin{align*}
& \left\Vert \mathcal{S}_{\operatorname*{Fourier}}^{s,\varphi,\mathbf{u}_{1}%
}f_{1}\ \mathcal{S}_{\operatorname*{Fourier}}^{s,\varphi,\mathbf{u}_{2}}%
f_{2}\ \mathcal{S}_{\operatorname*{Fourier}}^{s,\varphi,\mathbf{u}_{3}}%
f_{3}\right\Vert _{L^{\frac{q}{3}}\left(  \mathbb{R}^{3}\right)  }\\
& \approx\mathbb{E}_{\pm,\pm,\pm}\left\Vert \left(  \pm\mathsf{M}_{\mathbf{u}%
}^{s}\Phi_{\ast}\bigtriangleup_{I_{1}}^{\varphi}f_{1}\right)  ^{\wedge
}\ \left(  \pm\mathsf{M}_{\mathbf{u}}^{s}\Phi_{\ast}\bigtriangleup_{I_{2}%
}^{\varphi}f_{2}\right)  ^{\wedge}\ \left(  \pm\mathsf{M}_{\mathbf{u}}^{s}%
\Phi_{\ast}\bigtriangleup_{I_{3}}^{\varphi}f_{3}\right)  ^{\wedge}\right\Vert
_{L^{\frac{q}{3}}\left(  \mathbb{R}^{3}\right)  }\\
& \lesssim\mathbb{E}_{\pm,\pm,\pm}\left\Vert \left(  \mathcal{S}%
_{\operatorname{Kakeya}}^{t,\mathbf{u}}f_{1}\right)  \ \left(  \mathcal{S}%
_{\operatorname{Kakeya}}^{t,\mathbf{u}}f_{2}\right)  \ \left(  \mathcal{S}%
_{\operatorname{Kakeya}}^{t,\mathbf{u}}f_{3}\right)  \right\Vert _{L^{\frac
{q}{3}}\left(  \mathbb{R}^{3}\right)  }+\left\Vert \operatorname*{GeoFourDec}%
\right\Vert _{L^{\frac{q}{3}}\left(  \mathbb{R}^{3}\right)  }\lesssim
2^{\varepsilon s},
\end{align*}
by (\ref{expect}), (\ref{Kak square equiv}), and where the Kakeya square
function $\mathcal{S}_{\operatorname{Kakeya}}^{t,\mathbf{u}}$ is defined in
(\ref{Kakeya square}). Now we sum over the scales $s$ in question which only
enlarges the small constant $\varepsilon$ by a fixed factor.

At this point we have completed the proof of the forward implication
\begin{equation}
\left(  \forall\varepsilon>0\right)  \mathcal{B}_{\operatorname*{disj}\nu
}^{\operatorname*{square}}\left(  \otimes_{3}L^{\infty}\rightarrow L^{\frac
{q}{3}};\varepsilon\right)  \Longrightarrow\left(  \forall\varepsilon
>0\right)  \mathcal{A}_{\operatorname*{disj}\nu}^{\operatorname*{square}%
}\left(  \otimes_{3}L^{\infty}\rightarrow L^{\frac{q}{3}};\varepsilon\right)
.\label{for}%
\end{equation}
Turning to the converse implication, we claim that the modulated inequality
(\ref{single tri Four}) implies the unmodulated inequality (\ref{expect})
using estimates similar to those used above, together with the equivalence of
expectation estimates and square function estimates that follows from the
Khintchine inequality. Details of the proof of this converse implication are
left to the reader.


\begin{thebibliography}{999999}                                                                                           %
\bibitem[BeCaTa]{BeCaTa}\textsc{J. Bennett, A. Carbery and T. Tao}, \textit{On
the multilinear restriction and Kakeya conjectures}, Acta. Math. \textbf{196}
(2006), 261-302.

\bibitem[BoGu]{BoGu}\textsc{J. Bourgain and L. Guth},\textit{\ Bounds on
oscillatory integral operators based on multilinear estimates}, Geom. Funct.
Anal. \textbf{21} (6), 2011, 1239--1295.

\bibitem[FeSt]{FeSt}\textsc{C. Fefferman and E.M. Stein}, \textit{Some maximal
inequalities}, Amer. J. Math. \textbf{93} (1971), 107-115.

\bibitem[RaSaWi]{RaSaWi}\textsc{Robert Rahm, Eric T. Sawyer and Brett D.
Wick,} \textit{Weighted Alpert wavelets}, Journal of Fourier Analysis and
Applications (IF1.\textbf{273}), Pub Date : 2020-11-23, DOI:
10.1007/s00041-020-09784-0, \texttt{arXiv:1808.01223v2}.

\bibitem[RiSa2]{RiSa2}\textsc{C. Rios and E. Sawyer,} \textit{Equivalence of
linear and trilinear Kakeya conjectures in three dimensions},\textit{\ }%
\texttt{arXiv:2507.21315v4}.

\bibitem[RiSa3]{RiSa3}\textsc{C. Rios and E. Sawyer,} \textit{A single scale
smooth Alpert trilinear characterization of the Fourier extension conjecture
on the paraboloid in three dimensions, }\texttt{arXiv:2506.23376v4}.

\bibitem[Saw7]{Saw7}\textsc{E. Sawyer,} \textit{A probabilistic analogue of
the Fourier extension conjecture, }\texttt{arXiv:2311.03145v14}.

\bibitem[Ste]{Ste}\textsc{E. M. Stein,} \textit{Some problems in harmonic
analysis}, Harmonic analysis in Euclidean spaces (Proc. Sympos. Pure Math.,
Williams Coll., Williamstown, Mass., 1978), Part 1, pp. 3-20, Proc. Sympos.
Pure Math., \textbf{XXXV}, Part, Amer. Math. Soc., Providence, R.I., 1979.

\bibitem[Ste2]{Ste2}\textsc{E. M. Stein,} \textit{Harmonic Analysis:
real-variable methods, orthogonality, and oscillatory integrals}%
,\textit{\ }Princeton University Press, Princeton, N. J., 1993.

\bibitem[TaVaVe]{TaVaVe}\textsc{T. Tao, A. Vargas, and L. Vega}, \textit{A
bilinear approach to the restriction and Kakeya conjectures}, JAMS \textbf{11}
(1998), no. 4, 967--1000.
\end{thebibliography}
\end{document}